\author{Katharine Turner}
\title{Representing Vineyard Modules}
\tikzset{
  treenode/.style = {shape=rectangle, rounded corners,
                     draw, align=center,},
  root/.style     = {treenode, font=\small}, 
  env/.style      = {treenode, font=\ttfamily\normalsize},
  dummy/.style    = {circle,draw}
}
\tikzstyle{level 1}=[level distance=2.8cm, sibling distance=10cm]
\tikzstyle{level 2}=[level distance=2.8cm, sibling distance=5cm]
\date{}
\theoremstyle{plain}
\newtheorem*{theorem*}{Theorem}
\newtheorem{theorem}{Theorem}[section]
\newtheorem{lemma}[theorem]{Lemma}
\newtheorem{claim}[]{Claim}
\newtheorem{corollary}[theorem]{Corollary}
\newtheorem{prop}[theorem]{Proposition}
\newtheorem{assumption}[theorem]{Assumption}
\theoremstyle{definition}
\newtheorem{definition}[theorem]{Definition}
\newtheorem {proposition}[theorem] {Proposition}
\newcommand{\R}{\mathbb{R}}
\DeclareMathOperator{\supp}{supp}
\newcommand{\Z}{\mathbb{Z}}
\newcommand{\I}{\mathbb{I}}
\newcommand{\PD}{\mathcal{PD}}
\newcommand{\birth}{\mathbf{b}} 
\newcommand{\death}{\mathbf{d}} 
\newcommand{\Mod}[1]{\mathcal{#1}} 
\newcommand{\Vine}[1]{\mathscr{#1}} 
\newcommand{\Mat}{Mat}
\newcommand{\Vineyard}[1]{\mathbb{#1}} 
\tikzset{
	l/.style={gray,dashed},
	ll/.style={red,dashed},
	grn/.style=green!70!black,
	b/.style=blue,
	r/.style=red,
	p/.style=dark gray}
\newcommand{\Id}{\mathbb{I}}
\begin{document}

\maketitle
%




\begin{abstract}
Time-series of persistence diagrams, known as vineyards, have shown to be useful in diverse applications. A natural algebraic version of vineyards is a time series of persistence modules equipped with interleaving maps between the persistence modules at different time values. We call this a vineyard module. 
In this paper we will set up the framework for representing vineyards modules via families of matrices and outline an algorithmic way to change the bases of the persistence modules at each time step within the vineyard module to make the matrices in this representation as simple as possible. With some reasonable assumptions on the vineyard modules, this simplified representation of the vineyard module can be completely described (up to isomorphism) by the underlying vineyard and a vector of finite length. We first must set up a lot of preliminary results about changes of bases for persistence modules where we are given $\epsilon$-interleaving maps for sufficiently close $\epsilon$. While this vector representation is not in general guaranteed to be unique we can prove that it will be always zero when the vineyard module is isomorphic to the direct sum of vine modules. This new perspective on vineyards provides an interesting and yet tractable case study within multi-parameter persistence.
%
%
%
%
\end{abstract}

\section{Introduction}

Vineyards (\cite{munch2013applications, morozov2008homological, cohen2006vines}) are established within the topological data analysis literature as a way to studying time varying data with applications including music classification \cite{bergomi2020homological}, detecting dynamical regime change \cite{dee2021detecting}, and EEG dynamics \cite{yoo2016topological}, and studying fMRI data \cite{salch2021mathematics}. 
Historically vineyards are defined as a continuous map from a finite real interval to the space of persistence diagrams. Informally, there is an intuitive decomposition of vineyards into paths of points within the persistence diagrams which are called vines. However, to formally and rigorously decompose vineyards we need to view them as algebraic objects. In turn this requires informative ways to represent this algebraic information.

To view vineyards as an algebraic object we first need to consider them as a continuous map from the unit interval to the space of persistence modules instead of persistence diagrams. There is now an important choice - do we merely require the existence of appropriate interleaving maps (which means we have no more information that the continuous map into the space of persistence diagrams) or do we incorporate the interleaving maps as part of the algebraic object? To distinguish these situations we use the term \emph{vineyard} for a continuous map from a closed interval $[t_1, t_2]$ to the space of persistence diagrams, and \emph{vineyard module} to denote the algebraic object consisting of both the parameterised family of the original persistence modules alongside interleaving maps which are required to commute with the transition maps between the persistence modules. It turns out that there is a dramatic difference in the types of indecomposable vineyards under these two different paradigms. Vineyard modules contain strictly more information than vineyards. Different vineyard modules can become isomorphic as vineyards (when we forget the interleaving maps).  

Vineyards decompose naturally into paths of points within the plane. If no persistence diagram has any points with higher multiplicity, then this decomposition is guaranteed to be unique by continuity. This paths are called \emph{vines} in the existing literature. We can define vine modules as vineyard modules whose corresponding vineyard is a vine. The definition of an indecomposable vineyard module stems naturally from the definitions of morphisms and direct sums of vineyard modules. Morphisms between vineyard modules are a family of morphisms, one for each time value, which commute appropriately with interleaving maps between the persistence modules. Direct sums can constructed by taking direct sums for the persistence modules at each time value and constructing the interleaving maps as direct sums of interleaving maps for each summand. At the end we illustrate an example of a vineyard module which is provably not isomorphic to the direct sum of two vine modules.

A complete characterisation of indecomposable vineyard modules is beyond the scope of this paper. Here we focus on the first prerequisite step of creating a framework to represent vineyard modules. We define a vine and matrix representation of a vineyard module which consists of an index set of vines and a family of matrices. We show that two vineyard modules with the same vine and matrix representation are isomorphic. If all the matrices in this vine and matrix representation satisfy a common block diagonal form then we automatically can split the vineyard module as a direct sum of vineyard modules constructed over each of the blocks. 

However, there are many different vine and matrix representations for the same vineyard module as it is very dependent on the choices of basis made for the persistence modules at each time step. To counteract this ambiguity we outline an algorithmic way to change the bases of the persistence modules at each time step within the vineyard module to make the matrices in this representation as simple as possible. With some reasonable assumptions on the vineyard modules, this simplified representation of the vineyard module can be described (up to isomorphism) by the underlying vineyard and a vector of finite length. We first must set up a lot of preliminary results about changes of bases for persistence modules where we are given $\epsilon$-interleaving maps for sufficiently close $\epsilon$.
While we cannot show that this representation is guaranteed to be unique, we can prove that it will be always zero when the vineyard modules is isomorphic to the direct sum of vine modules. As such it provides an algorithmic method of determining when a vineyard module is trivial.

There are many potential directions of research studying these vineyard modules, with this paper providing a framework for studying them. This new perspective on vineyards provides an interesting new case study within multi-parameter persistence. More complicated than $1$-parameter persistence and yet much more tractable than ladders of persistence modules (which are effectively two persistence modules with a morphism between them), let alone persistent homology of bi-filtrations.

Related work includes the characterisation of the space of bases for a persistence module and the isomorphism classes and matrix representations of ladders (\cite{jacquard2023space, escolar2016persistence, asashiba2019matrix}). Other related work includes algorithms for updating persistence diagrams along vineyards such as in \cite{cohen2006vines, dey2021updating}. There is potential for efficient computation of representations of vineyard modules.


\section{Introducing vineyard modules and our simplifying assumptions}

This paper will be assuming readers are familiar with algebra of persistence modules, including interleaving maps, interleaving distance and bottleneck distance. 
This section is instead will focus on the various simplifying assumptions we will make.These assumptions relate to finiteness and genericity and will be reasonable in many applications.

Recall that a \emph{persistence module} $\Mod{X}$ is a collection of vector spaces $\{X_{t}\}_{t\in\mathbb{R}}$ along with transition maps  $\psi_s^t: X_s \rightarrow X_t$ for all $s\leq t$ such that $\psi_s^s$ is the identity for all $s$ and $\psi_s^t\circ \psi_r^s=\psi_r^t$ whenever $r\leq s\leq t$.
%
A \emph{morphism} between persistence modules $\alpha: \Mod{X} \to \Mod{Y}$ is a parameterised family of linear maps $\alpha_r : X_r \to Y_{r}$ which commute with the transitions maps of both $\Mod{X}$ and $\Mod{Y}$. An isomorphism between persistence modules is an invertible morphism.

In the representation theory of persistence modules the building blocks are interval modules. An \emph{interval module} over the interval $[b,d)$ is a persistence module with $X_t$ a copy of the field for $t\in [b,d)$ and $0$ otherwise. The transition maps are the identity when $s,t\in [b,d)$ and $0$ otherwise. We denote the interval module over the interval $[b,d)$ by $\Mod{I}[b,d)$.

\begin{assumption}
Throughout this paper we assume that every persistence module will isomorphic to 
 $\bigoplus\limits_{i=1}^{N} \Mod{I}[b_i, d_i),$
 with $b_i\in \R$ finite, $N$ finite and $[b_i,d_i)\neq [b_j,d_j)$ for all $i\neq j$. 
 \end{assumption}

We know that up to permuting the order of the intervals this decomposition would be unique.
In full generality there are four types of intervals, i.e. open-open -- $(\birth,\death)$, open-closed-- $(\birth,\death]$, closed-open -- $[\birth,\death)$, and closed-closed -- $[\birth,\death]$ which may appear in the decomposition of persistence modules, but we will assume no intervals of these other forms appear. Note that this restriction naturally occurs in virtually all applications.

By considering the each bar in the persistence barcode as a point in $\mathbb{R}^2$ with the first coordinate $b_i$ and second coordinate $d_i$, we obtain the \emph{persistence diagram}. We refer to $b_i$ as the birth time and $d_i$ as the death time. We will denote the space of persistence diagrams by $\PD$. Note that by our simplifying assumptions all our persistence diagrams contain only finitely many off-diagonal points. The space of persistence diagrams is equipped with many metrics. In this paper we will only consider the bottleneck distance.

The bottleneck distance is a form of optimal transport metric. For $X$ and $Y$ persistence diagrams with off-diagonal points $\{x_i=(a_i, b_i)\}$ and $\{y_j=(c_j, d_j)\}$ respectively, a transportation plan between $X$ and $Y$ is a subset $M\subset X\times Y$ such that each $x_i$ and $y_j$ appears in at most one pair. Let $U(X)\subset X$ be the $x_i$ not appearing in any pair in $M$ and $U(Y)\subset Y$ the set of $y_j$ not appearing in any pair in $M$. The cost associated to $M$ is 
$$\text{cost}(M)=\max\{\sup_{(x_i,y_j)\in M} (\max(|a_i-c_j|, |b_i-d_j|)), \sup_{x_i\in U(X)}(|a_i-b_i|/2),  \sup_{y_j\in U(Y)}(|c_j-d_j|/2) \}$$
The \emph{bottleneck distance} is defined as the infimum of the costs over all transportation plans.

One of the fundamental shifts in perspective required for topology to be useful in applications is the ability to quantify how far from isomorphic two objects are. To do this we loosen the definition of morphism, which we call an $\epsilon$-morphism to incorporate $\epsilon$ worth of wiggle room. A \emph{$\epsilon$-morphism} between persistence modules $\alpha: \Mod{X} \to \Mod{Y}$ is a parameterised family of linear maps $\alpha_r : X_r \to Y_{r+\epsilon}$ which commute with the transitions maps of both $\Mod{X}$ and $\Mod{Y}$. Note that a $0$-morphism is just a morphism. 
In this wiggle-room universe the analogous concept for an isomorphism is an $\epsilon$-interleaving. This consists of a pair of $\epsilon$-morphisms $\alpha: \Mod{X} \to \Mod{Y}$ and $\beta:\Mod{Y} \to \Mod{X}$ such that all the maps (the $\alpha_t$, $\beta_t$ and the transition maps in $\Mod{X}$ and $\Mod{Y}$) commute appropriately.

We can use interleaving maps to define the \emph{interleaving distance} between persistence modules;
$$d_{int}(\Mod{X}, \Mod{Y})=\inf \{\epsilon\geq 0\mid \text{there exists an }\epsilon-\text{interleaving between }\Mod{X} \text{ and }\Mod{Y}\}.$$
It is well known that the interleaving distance between two persistence modules is the same as the bottleneck distance between their respective diagrams \cite{lesnick2015theory}. For a vineyard module the persistence modules at each time value are given so we will be using interleaving distances.

%
%
%

We are now ready to define vineyards and then vineyard modules.
\begin{definition}
A \emph{vineyard} is a map from $[t_1,t_2]$ to $\PD$ which is continuous with respect to the bottleneck distance. 
\end{definition}

We can define the domain of a vineyard as the set of times where the corresponding persistence diagram contains at least one off-diagonal point.

\begin{definition}\label{def:support}
Given a vineyard $\Vine{X}=\{\Mod{X}_s\}$ the \emph{support} of $\Vine{X}$ is the set of $s$ such that $\Mod{X}_s$ contains an off-diagonal point.

A \emph{vine} is a map from a compact interval to $\PD$ which is continuous with respect to the bottleneck distance, such that the support is a non-empty interval and the number of off-diagonal points is at most one.
\end{definition}

Every vineyard can be written as the union of a finite number of vines. If no persistence diagram has any points of multiplicity greater than or equal to $2$ then this union of vines is unique up to reindexing. This is the generic case. Whenever there are a multiplicity of points in the persistence diagrams there will be a combinatorial explosion of different potential decompositions of the vineyard into vines. 

Our simplifying assumptions will imply all the vineyards we study are generic (as in never points of multiplicity in any persistence diagram), that every vineyard will contain only a finite number of vines, and that two critical values can coincide (as in only one pair of birth values, one pair of death values of one pair of a birth and a death values can be the same). These genericity assumptions are analogous to those found within Cerf theory which is studies one-parameter families of Morse functions.



\begin{definition}[Vineyard module]
Let $[t_1, t_2]\subset \R$ and $f: [t_1, t_2]\to (0,\infty)$ be a bounded continuous function. For $s,t\in[t_1, t_2]$ set $F(s,t)=\big|\int_s^t f(x)\, dx\big|$. A \emph{vineyard module} over with respect to $f$ is a family of persistence modules $\{V_t \mid t_1\leq t\leq t_2\}$, alongside for each $s<t$ an $F(s,t)$-interleaving $\alpha_s^t: V_s \to V_t$ and $\beta_t^s: V_t\to V_s$ such that all the interleaving maps commute with each other and the transition maps within the individual persistence modules. 
We call these $\alpha_s^t$ and $\beta_t^s$ the \emph{interleaving maps} within the vineyard module as they interleaving between the persistence modules at different time values.
\end{definition}


For the sake of clarity we will from now on make the simplifying assumption that $f:[t_1, t_2]\to \R$ is the constant function with value $1$. Such vineyard modules and their corresponding vineyards are called \emph{$1$-Lipschitz}. This will imply that the scaling function is $F(s,t)=|s-t|$. 

One potential way to extend the results from $1$-Lipschitz to more general vineyard modules would be to explore rescaling the time parameter via the value of $f$ at each point in time. There are complications when multiple vineyard modules which have different scales for the the interleaving maps which has great potential for confusion. For this reason we will restrict here to the simpler case where everything is $1$-Lipschitz and leave generalising to all vineyard modules to future research.

\begin{assumption}
We will assume that all vineyard modules are $1$-Lipschitz. 
\end{assumption}

%
%

%
%

Given a vineyard module we can forget the interleaving maps and consider the underlying vineyard. There can be many different vineyard modules that have the same underlying vineyard but are not isomorphic. We can define the vines within a vineyard module as the vines within its corresponding vineyard.

%

As we have two-dimensions to consider we will use different terminology to help discriminate. We will refer to the parameter along to vineyard which references which persistence module we are in as the \emph{time} and the parameter within a single persistence persistence module as the \emph{height}. A height within a persistence module is \emph{critical} if it is birth or death time of some interval within the interval decomposition. A time is \emph{critical} if the birth and death values within the corresponding persistence module are not all distinct.

\begin{assumption}
We will assume that all vineyard modules contain only finitely many vines and only finitely many critical times. We also will assume at the critical times that no more that at most two critical heights can coincide (as in only one pair of birth values, one pair of death values of one pair of a birth and a death values can be the same).
\end{assumption}

Our simplifying assumptions assure that the set of vines within a vineyard module is uniquely determined as there are no points with higher multiplicity than $1$ in any of the persistence diagrams. We can use the decomposition of a vineyard into vines to give a consistent labelling of the basis elements within the persistence modules of a vineyard module. This will substantially ease the bookkeeping required.


Given a vine we can construct its \emph{vine module} which is a vineyard module whose persistence modules  $\Mod{X}_t$ are interval modules $\Mod{I}[\birth(\gamma(t)), \death(\gamma(t)))$ for all $t\in \supp(\gamma)$ and the zero persistence module otherwise. We also have $|s-t|$-morphisms between $\Mod{X}_s=(X^s_a)_{a\in \R}$ and $\Mod{X}_t=(X^t_a)_{a\in \R}$; with $\alpha_a: X^s_a \to X^t_{a+|s-t|}$ are the identity for $\birth(\gamma(s))\leq a<\death(\gamma(t))-|s-t|$, and otherwise the zero map, and $\beta: : X^t_a \to X^s_{a+|s-t|}$ defined symmetrically.
%
%

\begin{definition}
A \emph{morphism} between vineyard modules $\Vine{A}=\{\Mod{A}_t\}$ and $\Vine{B}=\{\Mod{B}_t\}$ is a family of morphisms $\alpha_t:\Mod{A}_t \to \Mod{B}_t$ which commute with all the appropriate interleaving and transition maps. 
\end{definition}

Once we have a notion of morphism we can define, submodules, indecomposable modules, simple modules and a decomposition into submodules. There are many directions of theory development of the relevant homological algebra. However we will leave this for future work.

Given two vineyard modules we can consider their direct sum. 

\begin{definition}
Let $\Vine{V}=(\{V_t\},\{\alpha_V^{s\to t}\},  \{\beta_V^{t\to s}\}$ and $\Vine{W}=(\{W_t\}, \{\alpha_W^{s\to t}\},  \{\beta_W^{t\to s}\})$ be vineyard modules. Their \emph{direct sum} $\Vine{V}\oplus \Vine{W}$ is the vineyard module with persistence modules $\{V_t\oplus W_t\}$ and interleaving maps $\{\alpha_V^{s\to t}\oplus \alpha_W^{s\to t}\}$ and   $\{\beta_V^{t\to s}\oplus \beta_W^{t\to s}\}$.
\end{definition}

We know that every vineyard module is isomorphic to a direct sum of indecomposable vineyard modules. In this decomposition, each vine must be fully contained in a single summand.

\begin{proposition}\label{prop:whole vine}
Let $\Vine{X}$ be a vineyard module which is the direct sum of vineyard modules of vineyard modules $\Vine{V}=\{V_t, \alpha_V, \beta_V\}$ and $\Vine{W}=\{W_t, \alpha_W, \beta_W\}$.  Let $\gamma$ be a vine of $V$. Then either $[\birth(\gamma(t)), \death(\gamma(t)))$ is an interval in the interval decomposition of $V_t$ for all $t$ in the support of $\gamma$, or  $[\birth(\gamma(t)), \death(\gamma(t)))$ is an interval in the interval decomposition of $W_t$ for all $t$ in the support of $\gamma$.
\end{proposition}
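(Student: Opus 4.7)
The plan is to define the partition $S_V\sqcup S_W$ of the support of $\gamma$ according to whether the interval $[\birth(\gamma(t)),\death(\gamma(t)))$ is a summand of $V_t$ or of $W_t$, and to show that each of $S_V$ and $S_W$ is open in $\supp(\gamma)$. Since the support of a vine is a (nonempty) interval, connectedness then forces one of the two sets to be empty, which is exactly the statement of the proposition.

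First I would record the key structural consequence of the hypothesis $\Vine{X}\iso\Vine{V}\oplus\Vine{W}$: after identifying $X_t=V_t\oplus W_t$, every interleaving map of $\Vine{X}$ splits as $\alpha_X^{s\to t}=\alpha_V^{s\to t}\oplus\alpha_W^{s\to t}$ and similarly for $\beta$, and every transition map of $X_t$ splits as the direct sum of the transition maps of $V_t$ and $W_t$. In particular, for any $s,t$ the pair $(\alpha_V^{s\to t},\beta_V^{t\to s})$ is an $|s-t|$-interleaving between $V_s$ and $V_t$, and likewise for $W$. By the isometry theorem (\cite{lesnick2015theory}), the bottleneck distances $d_B(\Dgm(V_s),\Dgm(V_t))$ and $d_B(\Dgm(W_s),\Dgm(W_t))$ are both at most $|s-t|$. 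Since $\Dgm(X_t)=\Dgm(V_t)\sqcup\Dgm(W_t)$ as multisets (by uniqueness of the interval decomposition and the genericity assumption, which forbids multiplicities), the bottleneck matchings on $V$ and $W$ paste into a bottleneck matching on $X$ of the same cost.

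Next I would fix $t_0\in\supp(\gamma)$ and, without loss of generality, assume $t_0\in S_V$, so $[\birth(\gamma(t_0)),\death(\gamma(t_0)))$ is a summand of $V_{t_0}$. The genericity assumption guarantees a positive gap $\delta>0$ between the point $\gamma(t_0)\in\R^2$ and every other off-diagonal point of $\Dgm(X_{t_0})$, as well as a positive distance to the diagonal. By continuity of $\gamma$ together with the bottleneck bound above applied to $V$ and to $W$ separately, for all $s$ with $|s-t_0|$ sufficiently small the point $\gamma(s)$ is closer to $\gamma(t_0)$ than to any other point of $\Dgm(V_{t_0})\cup\Dgm(W_{t_0})$ and to the diagonal; in particular it must be matched to $\gamma(t_0)$ in any optimal bottleneck pairing, and therefore the interval $[\birth(\gamma(s)),\death(\gamma(s)))$ must belong to the summand that contains $[\birth(\gamma(t_0)),\death(\gamma(t_0)))$, namely $V_s$. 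Hence a whole neighborhood of $t_0$ in $\supp(\gamma)$ lies in $S_V$, showing $S_V$ is open; by symmetry $S_W$ is open.

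The main technical obstacle I anticipate is the bookkeeping needed to rule out the possibility that, near a critical time, the unique bar close to $\gamma(t_0)$ in $\Dgm(V_s)$ (resp.\ $\Dgm(W_s)$) is matched to a different off-diagonal point than $\gamma(t_0)$; this is where the genericity assumption that at most two critical heights coincide has to be invoked carefully, shrinking the neighborhood of $t_0$ so that even at nearby critical times the intervals involving $\gamma$ are distinguishable from the other bars by their birth or death coordinate. Once this local stability is in place, openness of $S_V$ and $S_W$, combined with connectedness of $\supp(\gamma)$, concludes the proof.
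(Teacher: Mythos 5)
Your proposal is correct and takes essentially the same approach as the paper: partition $\supp(\gamma)$ according to which summand the interval lands in, use the fact that $\Vine{V}$ is itself a vineyard module to bound the bottleneck distance between $V_s$ and $V_t$ by $|s-t|$, invoke genericity (no repeated points) to obtain a positive gap around $\gamma(t_0)$, and conclude by connectedness of $\supp(\gamma)$. The only cosmetic difference is that you phrase the final step as openness of $S_V$ and $S_W$, whereas the paper argues by contradiction via a converging sequence; these are the same argument.
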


\begin{proof}
Since $\Vine{X}=\Vine{V}\oplus\Vine{W}$ we also have $X_t=V_t \oplus W_t$ for all $t$. For each $t\in \supp(\gamma)$,  $[\birth(\gamma(t)), \death(\gamma(t)))$ is an interval in the interval decomposition of $X_t$ so it must either be an interval in $V_t$ or an interval in $W_t$. Let $A^V, A^W\subset \supp(\gamma)$ be the sets of values of $t$ where  $[\birth(\gamma(t)), \death(\gamma(t)))$ is an interval in the interval decomposition of $V_t$ and $W_t$ respectively. If either of these sets is empty we are done. Suppose neither set is empty. Since $\supp(\gamma)$ is a connected interval which is open in $(s_0,s_1)$ without loss of generality (swapping the roles of $\Vine{V}$ and $\Vine{W}$ if necessary) there exists a value $t\in A^V$ and a sequence $\{t_n\}$ in $A^W$ which converges to $t$. Let $\epsilon>0$ be the minimum distance from  $[\birth(\gamma(t)), \death(\gamma(t)))$ to any interval in $X_t$ or the diagonal. This $\epsilon$ is non-zero by our genericity assumption that there are no intervals of higher multiplicity. There is an element of $s\in \{t_n\}$ with distance less than $\epsilon/2$ from $t$. 

As $\Vine{V}$ is a vineyard module  the bottleneck distance between $V_t$ and $V_s$ is bounded by $|s-t|$. However, since $[\birth(\gamma(t)), \death(\gamma(t)))$ is not an interval in the interval decomposition of $V_s$  there is no interval within  $V_s$ suitable to pair with $[\birth(\gamma(t)), \death(\gamma(t)))$ in $V_t$. This causes a contradiction.

\end{proof}

From Proposition \ref{prop:whole vine} we know that each when decomposing a vineyard module into submodules that each vine that this decomposition will also partition the vines.

\begin{corollary}
Let $\Vine{V}$ be a vineyard modules with vines $\{\gamma_1, \gamma_2, \ldots \gamma_N\}$. Let $\bigoplus_{n=1}^k \Vine{V}_i$ be a decomposition of $\Vine{V}$ into (non-zero) indecomposable submodules. Then $k\leq N$ and there exists a partition $P=\sqcup_{i=1}^k P_i$ of $\{1, \ldots N\}$ such that the the vineyard of $\Vine{V}_i$ consists of the union of the vines $\{\gamma_j \mid  j\in P_i\}$. 
\end{corollary}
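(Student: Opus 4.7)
The plan is to iterate Proposition \ref{prop:whole vine} over the summands of the decomposition $\Vine{V}=\bigoplus_{i=1}^k \Vine{V}_i$, using induction on $k$ (or equivalently, by grouping summands pairwise). For each $i\in\{1,\ldots,k\}$, write $\Vine{V} = \Vine{V}_i \oplus \big(\bigoplus_{j\neq i}\Vine{V}_j\big)$, which is a decomposition into two vineyard submodules, and apply Proposition \ref{prop:whole vine} to conclude that every vine $\gamma_\ell$ of $\Vine{V}$ is either entirely supported as an interval summand of $(V_i)_t$ for all $t\in \supp(\gamma_\ell)$, or entirely supported in the complementary summand for all $t\in\supp(\gamma_\ell)$. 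Repeating over $i$ shows that for each vine $\gamma_\ell$ there is exactly one index $i(\ell)$ such that $[\birth(\gamma_\ell(t)),\death(\gamma_\ell(t)))$ appears in the interval decomposition of $(V_{i(\ell)})_t$ for all $t\in\supp(\gamma_\ell)$.

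Once this assignment $\ell\mapsto i(\ell)$ is defined, I would set $P_i := \{\ell \mid i(\ell)=i\}$. By construction the $P_i$ are pairwise disjoint and their union is $\{1,\ldots,N\}$, so they form a partition of the index set. The vineyard of $\Vine{V}_i$ is by definition the family of persistence diagrams of $(V_i)_t$; since the off-diagonal points of $(V_i)_t$ are exactly the intervals $[\birth(\gamma_\ell(t)),\death(\gamma_\ell(t)))$ for $\ell\in P_i$ with $t\in \supp(\gamma_\ell)$, the vineyard of $\Vine{V}_i$ is the union of the vines $\{\gamma_\ell \mid \ell\in P_i\}$.

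The main obstacle, and the step requiring care, is showing that each $P_i$ is non-empty, which then gives $k\leq N$. Since $\Vine{V}_i$ is non-zero by hypothesis, there exists some time $t_0$ such that $(V_i)_{t_0}$ is not the zero persistence module. By the standing finiteness assumption, $(V_i)_{t_0}$ decomposes as a finite direct sum of interval modules with finite births, so it contains at least one off-diagonal point in its persistence diagram. That off-diagonal point of $(V_i)_{t_0}$ is also an off-diagonal point of $V_{t_0}=\bigoplus_j (V_j)_{t_0}$, hence it lies on exactly one vine $\gamma_\ell$ of $\Vine{V}$ (using the genericity assumption that persistence diagrams have no points of multiplicity greater than one, and that the vineyard decomposes uniquely into vines). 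By the previous paragraph, $\ell\in P_i$, so $P_i$ is non-empty.

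Having $k$ pairwise disjoint non-empty subsets of $\{1,\ldots,N\}$ immediately gives $k\leq N$, which together with the partition property completes the proof. The only genuine subtlety is confirming that the index $i(\ell)$ is indeed well-defined, which relies on Proposition \ref{prop:whole vine} being invoked with the correct pairing $\Vine{V}_i$ vs.\ its complement; all other steps are essentially bookkeeping.
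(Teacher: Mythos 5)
Your proof is correct and follows essentially the same route as the paper: iterate Proposition \ref{prop:whole vine} over two-term splittings $\Vine{V}_i\oplus\bigoplus_{j\ne i}\Vine{V}_j$ to assign each vine to a unique summand, then argue $k\le N$ because a non-zero summand must carry at least one vine. The paper states this almost telegraphically; your version is just the same argument with the bookkeeping (well-definedness of $i(\ell)$, non-emptiness of each $P_i$) spelled out.
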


First observe that by Proposition \ref{prop:whole vine} we know that each vine must entirely contained in the vineyards of $\Vine{V_i}$ for exactly one $i$. We know that $k\leq N$ as whenever the corresponding vineyard of a vineyard module has no vines it must have come from the zero vineyard module. 



\begin{definition}
Given an underling vineyard $\Vineyard{V}$ with vines $\{\gamma_1, \gamma_2, \ldots ,\gamma_K\}$,  the \emph{trivial} vineyard module is the direct sum of the vine modules $\Mod{I}[\gamma_i]$.
\end{definition}

\section{Matrix representations of $\epsilon$-morphisms}

Throughout we will be exploiting matrix representation of $\epsilon$-morphisms between persistence modules which first requires understanding what a basis is. 
Given a persistence module there can be many possibles choices of basis. The space of bases is more complicated than in the situation of vector spaces. For looking at the space of all possible bases in a persistence module in great detail please see \cite{jacquard2023space}.  Here we will use much more condensed notation.
%
For the purposes of this paper we will use the following description of a basis. Note that this description does require the assumption our persistence modules are in the form $\oplus_{i=1}^m \Mod{I}[b_i,d_i)$ and other definitions would need to be used if we were considering intervals with different choices of closed/open endpoints. Before we define a basis we must first define the birth and death time of an element within a persistence module.

\begin{definition}
Let $\Mod{X}=(X_t, \phi_{s}^{t})$ be a persistence module. We say that $x\in X_t$  is \emph{born} at $t$, denoted $\birth(x)=t$, if $x$ is not in the image of $\phi_{s}^t(X_s)$ for any $s<t$. We define the \emph{death} of $x$, denoted $\death(x)$, to be $\inf\{s>\birth(x)\mid \phi_{\birth(x)}^s(x_t)=0\}$.  
\end{definition}

\begin{definition}
Suppose $\Mod{X}=(X_t, \phi_{s}^{t})$ is a persistence module with interval decomposition $\oplus_{i=1}^N \Mod{I}[b_i,d_i)$ such that no intervals appear with multiplicity greater than $1$.
The set $$\{x_1, x_2, \ldots , x_N\mid x_i \in X_{b_i}\}$$ is called a \emph{basis} for $\Mod{X}$ if, $\birth(x_i)=b_i$, $\death(x_i)=d_i$ and for each $t\in \R$, the set $\{\phi_{\birth(x_i)}^{t}(x_i) \mid \birth(x_i)\leq t<\death(x_i)\}$ is a basis of $X_t$.
\end{definition}

Once we have fixed a choice of basis for $\Mod{X}=(\{X_t\}, \{\phi_s^t\})$ and $\Mod{Y}=(\{Y_t\}, \{\psi_s^t\})$ we can consider the matrix representations for any $\epsilon$-morphism $\alpha:\Mod{X} \to \Mod{Y}$ with respect to this basis. 
Using the index order for the basis elements ($B_X=\{x_i\}$ generators of $\Mod{X}$ and $B_Y=\{y_j\}$ generators for $\Mod{Y}$), we can construct matrix $\Mat_{B_X}^{B_Y}(\alpha)$ by requiring
$$\alpha_{\birth(x_i)}(x_i)=\sum_{\{j\mid \birth(x_i)+\epsilon \in [\birth(y_j), \death(y_j))\}} \Mat_{B_X}^{B_Y}(\alpha)(j,i)\psi_{\birth(y_j)}^{\birth(x_i) +\epsilon}(y_j).$$
and setting $\Mat_{B_X}^{B_Y}(\alpha)(j,i)=0$ whenever $\birth(x_i) +\epsilon\notin [\birth(y_j), \death(y_j))$.
This is well defined as each vector space $Y_t\in \Mod{Y}$ has $\{\psi_{\birth(y_j)}^{t}(y_j)\mid \birth(y_j)\leq t<\death(y_j)\}$ as a basis.

\begin{lemma}
For fixed bases $B_X$ and $B_Y$ of persistence modules $\Mod{X}=(\{X_t\}, \{\phi_s^t\})$ and $\Mod{Y}=(\{Y_t, \psi_s^t\})$, the matrix $\Mat_{B_X}^{B_Y}(\alpha)$ completely determines $\alpha$. Furthermore, if $\Mat_{B_X}^{B_Y}(\alpha)(j,i)\neq 0$ then $$\birth(y_j) \leq \birth(x_i)+\epsilon<\death(y_j)\leq \death(x_i)+\epsilon$$
\end{lemma}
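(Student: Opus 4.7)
The plan is to handle the two assertions in turn. For the first, I would expand any $v\in X_r$ in the alive basis elements at height $r$ as $v=\sum_i c_i\,\phi_{\birth(x_i)}^{r}(x_i)$, where the sum ranges over those $i$ with $\birth(x_i)\le r<\death(x_i)$, and then apply $\alpha_r$. Because $\alpha$ commutes with both families of transition maps, $\alpha_r(v)=\sum_i c_i\,\psi_{\birth(x_i)+\epsilon}^{r+\epsilon}(\alpha_{\birth(x_i)}(x_i))$, and each $\alpha_{\birth(x_i)}(x_i)$ is read off from the matrix through the defining equation. This determines $\alpha_r$ on a basis of $X_r$ for every $r$, and hence determines $\alpha$ entirely.

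For the inequalities, the constraint $\birth(y_j)\le \birth(x_i)+\epsilon<\death(y_j)$ is automatic: outside this range, $\Mat_{B_X}^{B_Y}(\alpha)(j,i)$ is set to $0$ by convention. For the final inequality $\death(y_j)\le \death(x_i)+\epsilon$, I would push $x_i$ forward to its death. At $t=\death(x_i)$ we have $\phi_{\birth(x_i)}^{\death(x_i)}(x_i)=0$, so commutativity of $\alpha$ with the transition maps forces
\[ \psi_{\birth(x_i)+\epsilon}^{\death(x_i)+\epsilon}\bigl(\alpha_{\birth(x_i)}(x_i)\bigr)=0. \]
Substituting the matrix expansion for $\alpha_{\birth(x_i)}(x_i)$ and pushing forward gives
\[ \sum_{j} \Mat_{B_X}^{B_Y}(\alpha)(j,i)\,\psi_{\birth(y_j)}^{\death(x_i)+\epsilon}(y_j)=0. \]
Terms indexed by $j$ with $\death(y_j)\le \death(x_i)+\epsilon$ vanish because those intervals have already died, while the remaining vectors $\psi_{\birth(y_j)}^{\death(x_i)+\epsilon}(y_j)$ are genuine basis elements of $Y_{\death(x_i)+\epsilon}$ and hence linearly independent. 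Their coefficients must therefore be zero, which is exactly the contrapositive of the desired implication.

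The only subtlety I anticipate is bookkeeping, namely cleanly separating, at time $\death(x_i)+\epsilon$, the summands whose $y_j$ has already died from those which remain live basis elements. The no-higher-multiplicity clause of the standing assumption ensures the surviving generators are pairwise distinct basis elements, so linear independence applies without ambiguity. Beyond this, both parts of the statement reduce to routine manipulations using the interval decomposition and naturality of $\alpha$ with respect to transitions.
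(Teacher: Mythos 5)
Your proof is correct and follows essentially the same approach as the paper: the first half (expanding an arbitrary $v\in X_r$ in surviving basis elements and commuting $\alpha_r$ past transition maps) matches the paper's computation, and the death inequality follows from commutativity exactly as the paper asserts. Where the paper dispatches that inequality in a single terse sentence, you supply the natural filling-in — pushing $x_i$ to $\death(x_i)$, observing the image must vanish in $Y_{\death(x_i)+\epsilon}$, and invoking linear independence of the surviving $\psi_{\birth(y_j)}^{\death(x_i)+\epsilon}(y_j)$ — which is the right argument and a welcome elaboration.
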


\begin{proof}
We can write each of the linear maps $\alpha_t$ via  $\Mat_{B_X}^{B_Y}(\alpha)$ and the transition maps $\phi$ and $\psi$. 

\begin{align*}
\alpha_{s}\left(\sum_{\birth(x_i)\leq s<\death(x_i)} \lambda_i \phi_{\birth(x_i)}^{s}(x_i)\right)&=\sum_{\birth(x_i)\leq s<\death(x_i)} \lambda_i \alpha_s(\phi_{\birth(x_i)}^{s}(x_i))\\
&=\sum_{\birth(x_i)\leq s<\death(x_i)} \lambda_i \psi_{\birth(x_i)+\epsilon}^{s+\epsilon}(\alpha_{\birth(x_i)}(x_i))\\
&=\sum_{\birth(x_i)\leq s<\death(x_i)} \lambda_i \psi_{\birth(x_i)+\epsilon}^{s+\epsilon}\left(\sum_{\birth(y_j)\leq\birth(x_i) +\epsilon<\death(y_j)} \Mat_{B_X}^{B_Y}(\alpha)(j,i)\psi_{\birth(y_j)}^{\birth(x_i) +\epsilon}(y_j) \right)\\
&=\sum_{\{i\mid\birth(x_i)\leq s<\death(x_i)\}}\lambda_i\sum_{\{j\mid\birth(y_j)\leq \birth(x_i) +\epsilon<\death(y_j)\}} \Mat_{B_X}^{B_Y}(\alpha)(j,i)\psi_{\birth(y_j)}^{s+\epsilon}(y_j)
\end{align*}

Since the $\alpha_r$ must commute with the transition maps if $\Mat(\alpha)(j,i)$ is non-zero then $\death(x_i) \geq \death(y_j) - \epsilon$.  
\end{proof}

Instead of using change of basis matrices (such as explored in \cite{jacquard2023space}) we will instead represent each change of basis as a linear transformation of the previous basis. That is, we wish to write the new basis elements as a linear combination of the old basis elements. This will reduce the linear algebra calculations needed later and avoid the issue of using inverses (which are not well-defined when using extended basis later). 

\begin{definition}
Let $\Mod{X}$ be a persistence module with interval decomposition $\oplus_{i=1}^N \Mod{I}[b_i,d_i)$ such that no intervals appear with multiplicity greater than $1$.
We say that an $N\times N$ matrix $A=(a_{ij})$ is a \emph{basis transformation matrix} for $\Mod{X}$ if $a_{ii}\neq 0$ for all $i$ and whenever $a_{ji}\neq 0$ then $\birth(x_j)\leq \birth(x_i)$ and $\death(x_j)\leq \death(x_i)$.
\end{definition}




The following lemma is effectively proved in \cite{jacquard2023space} but with such vastly different notation and perspective that we include the proof here.


\begin{lemma}
Let $\Mod{X}=(\{X_t\}, \{\phi_s^t\})$ be a persistence module with interval decomposition $\oplus_{i=1}^N \Mod{I}[b_i,d_i)$ such that no intervals appear with multiplicity greater than $1$. Fix a basis $B=\{x_1, \ldots x_N\}$ for $\Mod{X}$. If $A=(a_{ji})$ is a basis transformation matrix  then the set $B^{new}:=\{x^{new}_1, x^{new}_2, \ldots , x^{new}_N\}$ forms a basis for $\Mod{X}$ where 
$$x^{new}_i=\sum a_{ji} \phi_{\birth(x_j)}^{\birth(x_i)}(x_j)\in X_{\birth(x_i)}.$$
With a slight abuse of notation we write $B_Y^{new}=A(B_Y)$.
For this new basis we have $\birth(x_i^{new})=\birth(x_i)$ and $\death(x_i^{new})=\death(x_i)$.
\end{lemma}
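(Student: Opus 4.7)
The plan is to exploit a partial order on basis elements. Extend the order given by $(\birth, \death)$ to any total order in which $(b_i, d_i)$ is non-decreasing; since the hypothesis forbids intervals with multiplicity greater than $1$, the condition $a_{ji} \neq 0$ with $j \neq i$ forces $j$ to strictly precede $i$ in this order. After relabelling, $A$ is upper triangular with non-zero diagonal entries. The formula for $x_i^{new}$ is then well-defined in $X_{\birth(x_i)}$: each summand $\phi_{\birth(x_j)}^{\birth(x_i)}(x_j)$ with $a_{ji} \neq 0$ satisfies $\birth(x_j) \leq \birth(x_i)$, and summands with $\death(x_j) \leq \birth(x_i)$ simply vanish.

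For the birth time, note that in the interval decomposition of $X_{b_i}$, the coefficient of $x_i^{new}$ along the $x_i$ summand is $a_{ii} \neq 0$. Since this summand is not contained in the image of $\phi_s^{b_i}$ for any $s < b_i$ (this being precisely what $b_i = \birth(x_i)$ means), neither is $x_i^{new}$, giving $\birth(x_i^{new}) = b_i$. For the death time, for any $s \geq d_i$ the constraint $\death(x_j) \leq \death(x_i)$ forces $\phi_{b_j}^s(x_j) = 0$ for every $j$ with $a_{ji} \neq 0$, so $\phi_{b_i}^s(x_i^{new}) = 0$; the reverse non-vanishing for $s < d_i$ will follow from the basis calculation below.

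For the basis property at time $t$, set $S_t = \{i \mid b_i \leq t < d_i\}$ and compute
$$\phi_{b_i}^t(x_i^{new}) = \sum_j a_{ji} \phi_{b_j}^t(x_j) = \sum_{j \in S_t} a_{ji} \phi_{b_j}^t(x_j),$$
where the terms with $\death(x_j) \leq t$ drop because $\phi_{b_j}^t(x_j) = 0$. The transformation from $\{\phi_{b_i}^t(x_i) \mid i \in S_t\}$ (a basis of $X_t$) to $\{\phi_{b_i}^t(x_i^{new}) \mid i \in S_t\}$ is thus realised by the submatrix $(a_{ji})_{j,i \in S_t}$, which inherits upper triangularity with non-zero diagonal from $A$ and is therefore invertible. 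Consequently the new set is a basis of $X_t$; moreover the $a_{ii}$ coefficient of $\phi_{b_i}^t(x_i)$ in the expression for $\phi_{b_i}^t(x_i^{new})$ is non-zero for every $t \in [b_i, d_i)$, completing the verification that $\death(x_i^{new}) = d_i$.

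The one subtle point I would double-check is the passage from the globally upper triangular $A$ to the submatrix indexed by $S_t$. A column index $i \in S_t$ may receive contributions from rows $j \notin S_t$ (when $d_j \leq t$ but $a_{ji} \neq 0$), but these rows contribute the zero vector at time $t$ and may be discarded without affecting the linear relation; what remains is exactly the upper triangular submatrix on $S_t$ in the inherited order, whose non-zero diagonal entries $a_{ii}$ ensure invertibility.
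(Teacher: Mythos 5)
Your proof is correct and follows essentially the same strategy as the paper: restrict $A$ to the indices $S_t$ active at time $t$, observe that after reindexing by the partial order the resulting submatrix is upper triangular with non-zero diagonal (hence invertible), and conclude that the transformed set remains a basis at each $t$. Your treatment is somewhat more careful than the paper's in two small places — you explicitly extend the partial order to a total order (handling ties $\birth(x_i)=\birth(x_j)$ via the no-multiplicity hypothesis) and you give a direct argument for $\birth(x_i^{new})=\birth(x_i)$ via the non-zero $a_{ii}$ component in the interval decomposition at $X_{b_i}$ — but these are refinements of the same underlying argument, not a different route.
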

\begin{proof}
Let $x^{new}_i=\sum a_{ji} \phi_{\birth(x_j)}^{\birth(x_i)}(x_j)$ which by construction is an element of $X_{\birth(x_i)}$. Fix a sufficiently small $\delta>0$ so that no births or deaths events occur within $[\birth(x_i)-\delta, \birth(x_i))$. 


As $B$ is a basis, $\phi_{\birth(x_i)}^{\death(x_i)} (x_i)=0$. Furthermore, by assumption, $\phi_{\birth(x_j)}^{\death(x_i)} (x_j)=0$ whenever $a_{ji}\neq 0$. Together these imply
$$\phi_{\birth(x_i)}^{\death(x_i)} (x_i^{new})=a_{ii}\phi_{\birth(x_i)}^{\death(x_i)} (x_i) + \sum_{\{j \mid \birth(x_j)<\birth(i)\}} a_{ji} \phi_{\birth(x_j)}^{\death(x_i)}(x_j)=0.$$

For $t\in [\birth(x_i), \death(x_i))$ we know that $\{\phi_{\birth(x_j)}^{t}(x_j) \mid \birth(x_j)\leq t<\death(x_j)\}$ is a basis of $X_t$. This implies that $\phi_{\birth(x_i)}^{t}(x_i)$ is linearly independent to 
$\{\phi_{\birth(x_j)}^{t}(x_j) \mid \birth(x_j)\leq t<\death(x_j), j\neq i\}$ and $\phi_{\birth(x_i)}^{t} (x_i^new)=a_{ii}\phi_{\birth(x_i)}^{t} (x_i) + \sum_{\{j \mid \birth(x_j)<\birth(i)\}} a_{ji} \phi_{\birth(x_j)}^{t}(x_j)\neq 0$.

We have now shown that $\birth(x_i^{new})=\birth(x_i)$ and $\death(x_i^{new})=\death(x_i)$ for all $i$. 
We need to show that the set $\{\phi_{\birth(x_i^{new})}^{t}(x_i) \mid \birth(x_i^{new})\leq t<\death(x_i^{new})\}$ is a basis of $X_t$. 


Fix a $t$ and let $S=\{i\mid \birth(x_i)\leq t<\death(x_i)\}$. set $A_t$ to be the matrix $A$ restricted to the columns and rows with indices in $S$. Without loss of generality, rearrange the order of the indices in $S$ and the corresponding rows and columns within $A_S$ such that $b_j\leq b_i$ whenever $j\leq i$. Our assumptions on the entries $a_{ji}$ imply that $A_t$ is an upper triangular matrix with non-zero diagonal entries. This implies $A_t$ is always invertible. As vectors in $X_t$ we have $x_i^{new}=A_t x_i$ for each $i\in S$. Since $\{x_i\mid i\in S\}$ is a basis of $X_t$ and $A_t$ is invertible we also have $\{x_i^{new}\mid i\in S\}$ is a basis for $X_t$.

\end{proof}

Note that if $\birth(x_i)>\death(x_j)$ then $\phi_{\birth(x_j)}^{\birth(x_i)}(x_j)=0$. This means that more than one basis transformation matrix can create the same new basis. Here we are only considering basis transformations which retain the same indexing with respect to some interval decomposition. It would be possible to generalise to allow for permutations of the indexing of the intervals. However in the context of vineyard modules this is unnecessary and a potential source of confusion.

We now want to understand how the matrices of $\epsilon$-morphisms change when we transform the basis. This will be analogous to matrix theory but some care needs to be made. We will use $\I$ to denote the identity matrix.


\begin{lemma}\label{lem:identity}
Consider an $\epsilon$-morphism $\alpha:\Mod{X} \to \Mod{Y}$ where $B_X$ is a basis for $\Mod{X}$ and $B_Y^{old}$ is a basis for $\Mod{Y}$ such that $|\birth(x_i)-\birth(y_i)|<\epsilon$ and $|\death(x_i)-\death(y_i)|<\epsilon$ and all intervals are of length greater than $2\epsilon$. If $\Mat_{B_X}^{B_Y^{old}}(\alpha)$ is a basis transformation matrix for $\Mod{Y}$ and $B_Y^{new}= \Mat_{B_X}^{B_Y^{old}}(\alpha)(B_Y^{old})$ is the corresponding transformed basis then $\Mat_{B_X}^{B_Y^{new}}(\alpha)=\Id$.
%
\end{lemma}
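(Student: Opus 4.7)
The plan is to verify $\Mat_{B_X}^{B_Y^{new}}(\alpha)=\Id$ column by column: for each $i$, I will compute $\psi_{\birth(y_i^{new})}^{\birth(x_i)+\epsilon}(y_i^{new})$ by expanding $y_i^{new}$ in the old basis, and check that the result equals $\alpha_{\birth(x_i)}(x_i)$. By uniqueness of expansion with respect to the new basis of $Y_{\birth(x_i)+\epsilon}$, this forces the $i$-th column of $\Mat_{B_X}^{B_Y^{new}}(\alpha)$ to be $e_i$.

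The preceding lemma tells us that $B_Y^{new}$ is indeed a basis of $\Mod{Y}$ with $\birth(y_i^{new})=\birth(y_i^{old})$ and $\death(y_i^{new})=\death(y_i^{old})$. Combining this with the hypotheses $|\birth(x_i)-\birth(y_i^{old})|<\epsilon$, $|\death(x_i)-\death(y_i^{old})|<\epsilon$, and all intervals having length greater than $2\epsilon$, I get $\birth(y_i^{new})\leq \birth(x_i)+\epsilon<\death(y_i^{new})$, so that $\psi_{\birth(y_i^{new})}^{\birth(x_i)+\epsilon}(y_i^{new})$ is a nonzero basis vector of $Y_{\birth(x_i)+\epsilon}$ used in the matrix representation.

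Writing $A=\Mat_{B_X}^{B_Y^{old}}(\alpha)=(a_{ji})$ and applying $\psi_{\birth(y_i^{old})}^{\birth(x_i)+\epsilon}$ to the defining expression $y_i^{new}=\sum_j a_{ji}\phi_{\birth(y_j^{old})}^{\birth(y_i^{old})}(y_j^{old})$, the composition law
\[
\psi_{\birth(y_i^{old})}^{\birth(x_i)+\epsilon}\circ\phi_{\birth(y_j^{old})}^{\birth(y_i^{old})}=\psi_{\birth(y_j^{old})}^{\birth(x_i)+\epsilon}
\]
applies, since $\birth(y_j^{old})\leq \birth(y_i^{old})$ whenever $a_{ji}\neq 0$ (because $A$ is a basis transformation matrix) and $\birth(y_i^{old})\leq \birth(x_i)+\epsilon$ by $\epsilon$-closeness. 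Any terms where $\phi_{\birth(y_j^{old})}^{\birth(y_i^{old})}(y_j^{old})=0$ (i.e.\ $\death(y_j^{old})\leq \birth(y_i^{old})$) simply vanish on both sides. What remains is
\[
\psi_{\birth(y_i^{new})}^{\birth(x_i)+\epsilon}(y_i^{new})=\sum_j a_{ji}\,\psi_{\birth(y_j^{old})}^{\birth(x_i)+\epsilon}(y_j^{old}),
\]
which is precisely $\alpha_{\birth(x_i)}(x_i)$ by the definition of $A=\Mat_{B_X}^{B_Y^{old}}(\alpha)$.

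Finally, since $\{\psi_{\birth(y_j^{new})}^{\birth(x_i)+\epsilon}(y_j^{new})\mid \birth(y_j^{new})\leq \birth(x_i)+\epsilon<\death(y_j^{new})\}$ is a basis of $Y_{\birth(x_i)+\epsilon}$, the uniqueness of the expansion forces $\Mat_{B_X}^{B_Y^{new}}(\alpha)(i,i)=1$ and $\Mat_{B_X}^{B_Y^{new}}(\alpha)(j,i)=0$ for all $j\neq i$. As this holds for every $i$, the matrix is the identity. The main obstacle is not computational but bookkeeping: checking that each transition-map composition lies in the correct range and that $y_i^{new}$ is alive at $\birth(x_i)+\epsilon$, both of which follow cleanly from the hypothesis that interval lengths exceed $2\epsilon$ and that births/deaths of $x_i$ and $y_i$ are within $\epsilon$ of each other.
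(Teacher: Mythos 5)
Your proof is correct and takes essentially the same approach as the paper: both expand $\alpha_{\birth(x_i)}(x_i)$ via the matrix against $B_Y^{old}$, use the basis-transformation property to push all transition maps through $\psi_{\birth(y_i^{old})}^{\birth(x_i)+\epsilon}$, and recognize the inner sum as the defining expression for $y_i^{new}$. You supply a bit more bookkeeping (explicitly verifying $y_i^{new}$ is alive at $\birth(x_i)+\epsilon$ and invoking uniqueness of coordinates in the new basis), but the core computation is identical; one small notational slip is writing $\phi$ for transition maps in $\Mod{Y}$, which should be $\psi$.
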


\begin{proof}
Fix $i$. Since $\Mat_{B_X}^{B_Y^{old}}(\alpha)$ is a basis transformation we know that whenever $\Mat_{B_X}^{B_Y^{old}}(\alpha)(j,i)\neq0$ we have $\birth(y_j^{old})\leq \birth(y_i^{old})$. This means we can rewrite each of the $\psi_{\birth(y_j^{old})}^{\birth(x_i)+\epsilon}$ as the composition of $\psi_{\birth(y_i^{old})}^{\birth(x_i)+\epsilon}$ and $\psi_{\birth(y_j^{old})}^{\birth(y_i^{old})}$. 

\begin{align*}
\alpha_{\birth(x_i)}(x_i)&=\sum_{\birth(y_j^{old})\leq \birth(x_i)+\epsilon} \Mat_{B_X}^{B_Y^{old}}(\alpha)(j,i)\psi_{\birth(y_j^{old})}^{\birth(x_i)+\epsilon}(y_j^{old})\\
&=\psi_{\birth(y_i^{old})}^{\birth(x_i)+\epsilon}\left(\sum_{\birth(y_j^{old})\leq \birth(x_i)+\epsilon}  \Mat_{B_X}^{B_Y^{old}}(\alpha)(j,i)\psi_{\birth(y_j^{old})}^{\birth(y_i^{old})}(y_j^{old})\right)\\
&=\psi_{\birth(y_i^{new})}^{\birth(x_i)+\epsilon}(y_j^{new}).
\end{align*}
Note that $\birth(y_i^{new})=\birth(y_i^{old})$ by definition.
%
%
%
\end{proof}

Slightly more complication but of high importance later is the case where $\Mat_{B_X}^{B_Y^{old}}(\alpha)e_{lk}^{\mu}$ is a basis transformation, where $e_{lk}^\mu$ is the elementary matrix with 
\begin{equation*}
e_{lk}^\mu (i,j)= \begin{cases} 
1 & \text{ if }i=j\\
\mu & \text{ if } (i,j)=(k,l)\\
0 & \text{ otherwise.}
\end{cases}
\end{equation*}

The function $A\mapsto Ae^{\mu}_{lk}$ corresponds to the standard elementary column operation of adding $\mu$ times column $l$ to column $k$.

\begin{lemma}\label{lem:rowmatrix}
Consider an $\epsilon$-morphism $\alpha:\Mod{X} \to \Mod{Y}$ where $B_X$ is a basis for $\Mod{X}$ and $B_Y^{old}$ is a basis for $\Mod{Y}$ such that $|\birth(x_i)-\birth(y_i^{old})|<\epsilon$ and $|\death(x_i)-\death(y_i^{old})|<\epsilon$ and all intervals are of length greater than $2\epsilon$. Further assume that $\birth(x_l)+\epsilon < \death(y_k^{old})$. 

If $\Mat_{B_X}^{B_Y^{old}}(\alpha)e_{lk}^{-\lambda}$ is a basis transformation for $\Mod{Y}$ and $B_Y^{new}$ is the basis for $\Mod{Y}$ and after this basis transformation. Then $\Mat_{B_X}^{B_Y^{new}}(\alpha)=e^{\lambda}_{lk}$.
\end{lemma}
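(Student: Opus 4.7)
The plan is to reduce the lemma to an application of the same factorisation idea used in Lemma \ref{lem:identity}, after first inverting the basis-change relation. Since $k\neq l$, the elementary matrices $e_{lk}^{-\lambda}$ and $e_{lk}^{\lambda}$ are mutually inverse, so writing $A = \Mat_{B_X}^{B_Y^{old}}(\alpha)$ and $M' = A e_{lk}^{-\lambda}$, we immediately have $A = M'\,e_{lk}^{\lambda}$. Entry-by-entry this says $A(\cdot,i) = M'(\cdot,i)$ for $i\neq l$ and $A(\cdot,l) = M'(\cdot,l) + \lambda\,M'(\cdot,k)$. Since $B_Y^{new}$ is constructed from $M'$ acting on $B_Y^{old}$, and $M'$ is a basis transformation, we may use the column-wise structure of $M'$ to factor the action of $\alpha$ exactly as in the previous lemma.

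For each column $i\neq l$, I would repeat verbatim the computation of Lemma \ref{lem:identity}, applied to column $i$ of $M'$ (which equals column $i$ of $A$). The basis-transformation condition ensures $M'(j,i)\neq 0$ implies $\birth(y_j^{old})\leq \birth(y_i^{old})$, so every transition map $\psi_{\birth(y_j^{old})}^{\birth(x_i)+\epsilon}$ appearing in the expansion of $\alpha_{\birth(x_i)}(x_i)$ factors through $\psi_{\birth(y_i^{old})}^{\birth(x_i)+\epsilon}$, yielding
$$\alpha_{\birth(x_i)}(x_i) = \psi_{\birth(y_i^{old})}^{\birth(x_i)+\epsilon}\!\left(\sum_j M'(j,i)\,\psi_{\birth(y_j^{old})}^{\birth(y_i^{old})}(y_j^{old})\right) = \psi_{\birth(y_i^{new})}^{\birth(x_i)+\epsilon}(y_i^{new}).$$
By uniqueness of the expansion in the basis $B_Y^{new}$, column $i$ of $\Mat_{B_X}^{B_Y^{new}}(\alpha)$ equals the $i$-th standard basis vector, matching column $i$ of $e_{lk}^{\lambda}$.

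For column $l$ I would substitute $A(j,l) = M'(j,l) + \lambda M'(j,k)$ into the expansion of $\alpha_{\birth(x_l)}(x_l)$ and split it into two sums. The first sum involves column $l$ of $M'$, whose nonzero entries satisfy $\birth(y_j^{old})\leq\birth(y_l^{old})$, so it factors to $\psi_{\birth(y_l^{new})}^{\birth(x_l)+\epsilon}(y_l^{new})$. The second sum involves column $k$ of $M'$ (which coincides with column $k$ of $A$), whose nonzero entries satisfy $\birth(y_j^{old})\leq\birth(y_k^{old})$, so it factors to $\lambda\,\psi_{\birth(y_k^{new})}^{\birth(x_l)+\epsilon}(y_k^{new})$. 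Adding the two contributions gives precisely column $l$ of $e_{lk}^{\lambda}$, completing the identification of $\Mat_{B_X}^{B_Y^{new}}(\alpha)$.

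The main point that needs care is the well-definedness of the second factorisation: both that $y_k^{new}$ is alive at time $\birth(x_l)+\epsilon$ and that the transition map $\psi_{\birth(y_k^{old})}^{\birth(x_l)+\epsilon}$ exists. The hypothesis $\birth(x_l)+\epsilon<\death(y_k^{old})$ gives the first ($\death(y_k^{new})=\death(y_k^{old})$, so the image is nonzero). For the second, the necessary inequality $\birth(y_k^{old})\leq \birth(x_l)+\epsilon$ is forced by the assumption that $M'$ is a basis transformation: the entry $M'(k,l)=A(k,l)-\lambda A(k,k)$ is (generically) nonzero, which demands $\birth(y_k^{old})\leq \birth(y_l^{old})$, and this combined with $|\birth(x_l)-\birth(y_l^{old})|<\epsilon$ gives the bound. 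Flagging and carefully checking this ordering is, I expect, the only genuinely delicate step; the rest of the argument is a direct column-wise unwinding of definitions mirroring Lemma \ref{lem:identity}.
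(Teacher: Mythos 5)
Your proof is structurally identical to the paper's: expand $\alpha_{\birth(x_i)}(x_i)$ in the old basis, use the basis-transformation property of $M'=\Mat_{B_X}^{B_Y^{old}}(\alpha)e_{lk}^{-\lambda}$ to factor the expansion through $y_i^{new}$ (with an extra $\lambda\,y_k^{new}$ contribution in the one special column), and read off the entries of $\Mat_{B_X}^{B_Y^{new}}(\alpha)$. The one substantive difference is which column is the ``special'' one, and this is worth flagging because the paper itself is internally inconsistent on this point. You have taken the displayed definition of $e_{lk}^{\mu}$ literally (nonzero off-diagonal entry at position $(k,l)$), so right multiplication modifies column $l$ and your exceptional case is $i=l$. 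The paper's own proof, by contrast, treats $i\neq k$ as the unchanged columns and $i=k$ as special; this matches the verbal description (``adding $\mu$ times column $l$ to column $k$'') and the later explicit statement in Proposition~\ref{prop:inclusion} that $e_{lk}^{\lambda}$ has ``$\lambda$ in the $(l,k)$ entry'', but contradicts the displayed formula. Notably, the stated hypothesis $\birth(x_l)+\epsilon<\death(y_k^{old})$ is exactly what your reading requires for the well-definedness of the second factor $\psi_{\birth(y_k^{new})}^{\birth(x_l)+\epsilon}(y_k^{new})$, whereas the paper's $(l,k)$-convention actually requires $\birth(x_k)+\epsilon<\death(y_l^{old})$; so your interpretation is the one that makes the lemma statement self-consistent, even though it disagrees with the convention used in the surrounding proofs.

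Two minor points to tighten. First, your justification that $\birth(y_k^{old})\leq\birth(x_l)+\epsilon$ (needed so the transition map $\psi_{\birth(y_k^{old})}^{\birth(x_l)+\epsilon}$ makes sense) leans on $M'(k,l)$ being ``generically'' nonzero; that entry can certainly vanish, so the argument as written has a small hole. The cleaner route is to use the diagonal condition of the basis transformation, $M'(k,k)=A(k,k)\neq 0$, which gives $\birth(y_k^{old})\leq\birth(x_k)+\epsilon$, and then invoke the ordering of $x_k$ and $x_l$ coming from the intended application (in Proposition~\ref{prop:inclusion} one has $x_l\leq x_k$, hence $\birth(x_k)\leq\birth(x_l)$); the paper's own proof has the same implicit reliance on this context. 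Second, you should say explicitly why the factorisation for column $l$ uniquely determines the column of $\Mat_{B_X}^{B_Y^{new}}(\alpha)$: the images $\{\psi_{\birth(y_j^{new})}^{\birth(x_l)+\epsilon}(y_j^{new})\}$ of the live new basis elements form a basis of $Y_{\birth(x_l)+\epsilon}$, so the expansion is unique. With those two points spelled out your proof is complete and matches the paper's method.
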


\begin{proof}

First consider $i\neq k$. Under the basis transformation $\Mat_{B_X}^{B_Y^{old}}(\alpha)e_{lk}^{-\lambda}$ we have 
$$ y_i^{new}=\sum_j\left(\Mat_{B_X}^{B_Y^{old}}(\alpha)e_{lk}^{-\lambda}\right)(j,i)\psi_{\birth(y_j^{old})}^{\birth(y_i^{old})}(y_j^{old})=\sum_j \Mat_{B_X}^{B_Y^{old}}(\alpha)(j,i)\psi_{\birth(y_j^{old})}^{\birth(y_i^{old})}(y_j^{old}).$$
With the new basis we have
\begin{align*}
\alpha_{\birth(x_i)}(x_i)&=\sum_{\birth(y_j^{old})\leq \birth(x_i)+\epsilon}  \Mat_{B_X}^{B_Y^{old}}(\alpha)(j,i)\psi_{\birth(y_j^{old})}^{\birth(x_i)+\epsilon}(y_j^{old})\\
&=\psi_{\birth(y_i^{old})}^{\birth(x_i)+\epsilon}\left(\sum_{\birth(y_j^{old})\leq \birth(x_i)+\epsilon}  \Mat_{B_X}^{B_Y^{old}}(\alpha)(j,i)\psi_{\birth(y_j^{old})}^{\birth(y_i^{old})}(y_j^{old})\right)\\
&=\psi_{\birth(y_i^{new})}^{\birth(x_i)+\epsilon}(y_i^{new})\\
\end{align*}

Note that $\birth(y_i^{new})=\birth(y_i^{old})$ by definition.

As the $(j,l)$ entry of $\Mat_{B_X}^{B_Y^{old}}(\alpha)$ and  $ \Mat_{B_X}^{B_Y^{old}}(\alpha)e_{lk}^{-\lambda}$ agree, if $\Mat_{B_X}^{B_Y^{old}}(\alpha)(j,l)\neq 0$ then $\birth(y_j)\leq \birth(y_l^{old})$. 
If $\Mat_{B_X}^{B_Y^{old}}(\alpha)(j,k)-\lambda \Mat_{B_X}^{B_Y^{old}}(\alpha)(j,l)\neq 0$ then $\birth(y_j^{old})\leq \birth(y_k^{old})$  as by assumption as $\Mat_{B_X}^{B_Y^{old}}(\alpha)e_{lk}^{-\lambda}$ is a basis transformation for $\Mod{Y}$. 
We can use these facts to rewrite the summations in the following calculation.

\begin{align*}
\alpha_{\birth(x_k)}(x_k)&=\sum_{\birth(y_j^{old})\leq \birth(x_k)+\epsilon}   \Mat_{B_X}^{B_Y^{old}}(\alpha)(j,k)\psi_{\birth(y_j^{old})}^{\birth(x_k)+\epsilon}(y_j^{old})\\
&=\sum_{\birth(y_j^{old})\leq \birth(x_k)+\epsilon}  ( \Mat_{B_X}^{B_Y^{old}}(\alpha)(j,k)-\lambda  \Mat_{B_X}^{B_Y^{old}}(\alpha)(j,l)) +\lambda  \Mat_{B_X}^{B_Y^{old}}(\alpha)(j,l))\psi_{\birth(y_j^{old})}^{\birth(x_k)+\epsilon}(y_j^{old})\\
&=\psi_{\birth(y_k^{old})}^{\birth(x_k)+\epsilon}\left(\sum_{\birth(y_j^{old})\leq \birth(y_k^{old})} (\Mat_{B_X}^{B_Y^{old}}(\alpha)(j,k)-\lambda \Mat_{B_X}^{B_Y^{old}}(\alpha)(j,l))\psi_{\birth(y_j^{old})}^{\birth(y_k^{old})}(y_j^{old})\right) \\
&\qquad + \psi_{\birth(y_l^{old})}^{\birth(x_k)+\epsilon}\lambda \left(\sum_{\birth(y_j^{old})\leq \birth(y_l^{old})}  \Mat_{B_X}^{B_Y^{old}}(\alpha)(j,l)\psi_{\birth(y_j^{old})}^{\birth(y_l^{old})}(y_j^{old})\right)\\
&=\psi_{\birth(y_k^{new})}^{\birth(x_k)+\epsilon}(y_k^{new}) + \lambda \psi_{\birth(y_l^{new})}^{\birth(x_k)+\epsilon}(y_l^{new})\\
\end{align*}

From our assumptions about the lengths of intervals and the pairing of critical values we know that $\birth(x_i)+\epsilon < \death(y_i^{new})$ for all $i$. We also assumed that $\birth(x_l)+\epsilon < \death(y_k^{new})$. Since the $\{y_j^{new}\}$ form a basis we can conclude that $\Mat_{B_X}^{B_Y^{new}}(\alpha)=e_{lk}^{\lambda}$.

\end{proof}

To make the bookkeeping easier later we will want to have the same number of basis elements throughout the time period of a vineyard. It will be helpful to generalise our notion of basis to allow for extra zero elements. To do this we will introduce the definition of an extended basis and transformation of an extended basis.

\begin{definition}
Given a persistence module $\Mod{X}$ we say that an \emph{extended basis} of $\Mod{X}$ is a multiset $B'$ consisting of the union of a basis $B$ of $\Mod{X}$ and an indexed set of zero elements. 
\end{definition}

Note that within an extended basis the order of the indices of the zero and non-zero elements may be mixed up. When we wish to pull out the basis contained in an extended basis we will be restricting to appropriate subset of indices.

The notions of the matrix of a morphism and basis transformations naturally extend to extended basis. To extend the definition of the matrix of a morphism we merely add in rows and columns of zeros for the indices of the extended basis which are zero. To extend the notion of a basis transformation we also add rows and columns for the zero elements of the different extended basis. If we restrict the extended basis transformation matrix to the indices of the contained bases then we will have a (non-extended) basis transformation matrix.

\section{Simplifying the matrix for an $\epsilon$-interleaving}

This section is devoted to understanding when $\Mat_{B_X}^{B_Y}(\alpha)$ is a basis transformation matrix for $\Mod{Y}$ when $\alpha:\Mod{X}\to \Mod{Y}$ is part of an interleaving of sufficiently close persistence modules. Firstly we will establish a useful lemma for calculations.

\begin{lemma}\label{lem:algebra}
Let $B_X$ and $B_Y$ be bases for persistence modules $\Mod{X}=(X_t, \phi_s^t)$ and $\Mod{Y}=(Y_t, \psi_s^t)$. Let $\alpha: \Mod{X}\to \Mod{Y}$ and $\beta:\Mod{Y} \to \Mod{X}$ form an $\epsilon$-interleaving. Then for each $i$ we have 
$$\beta_{\birth(x_i)+\epsilon}(\alpha_{\birth(x_i)}(x_i))= \sum_{j,k} \Mat_{B_X}^{B_Y}(\alpha)(j,i) \Mat_{B_Y}^{B_X}(\beta)(k,j)  \phi_{\birth(x_k)}^{\birth(x_i)+2\epsilon}(x_k).$$
 \end{lemma}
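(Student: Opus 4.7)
The plan is to unwind both matrix representations by direct substitution, using that $\beta$ commutes with the transition maps of $\Mod{X}$ and $\Mod{Y}$, and then to compose the transition maps to land in the correct vector space.

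First I would expand $\alpha_{\birth(x_i)}(x_i)$ using the defining equation for $\Mat_{B_X}^{B_Y}(\alpha)$, obtaining
\[
\alpha_{\birth(x_i)}(x_i)=\sum_{j} \Mat_{B_X}^{B_Y}(\alpha)(j,i)\,\psi_{\birth(y_j)}^{\birth(x_i)+\epsilon}(y_j),
\]
where the sum is understood to range only over $j$ with $\birth(y_j)\leq \birth(x_i)+\epsilon<\death(y_j)$ since the matrix entry is zero otherwise. Applying $\beta_{\birth(x_i)+\epsilon}$ and using linearity moves the sum outside, giving a sum of terms $\Mat_{B_X}^{B_Y}(\alpha)(j,i)\,\beta_{\birth(x_i)+\epsilon}\!\left(\psi_{\birth(y_j)}^{\birth(x_i)+\epsilon}(y_j)\right)$.

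Next I would use that $\beta$ commutes with transition maps as part of being an $\epsilon$-morphism: for each such $j$,
\[
\beta_{\birth(x_i)+\epsilon}\!\left(\psi_{\birth(y_j)}^{\birth(x_i)+\epsilon}(y_j)\right)=\phi_{\birth(y_j)+\epsilon}^{\birth(x_i)+2\epsilon}\!\left(\beta_{\birth(y_j)}(y_j)\right).
\]
Now expanding $\beta_{\birth(y_j)}(y_j)$ via its matrix representation yields
\[
\beta_{\birth(y_j)}(y_j)=\sum_{k}\Mat_{B_Y}^{B_X}(\beta)(k,j)\,\phi_{\birth(x_k)}^{\birth(y_j)+\epsilon}(x_k),
\]
and composing the transition maps via $\phi_{\birth(y_j)+\epsilon}^{\birth(x_i)+2\epsilon}\circ\phi_{\birth(x_k)}^{\birth(y_j)+\epsilon}=\phi_{\birth(x_k)}^{\birth(x_i)+2\epsilon}$ immediately gives the claimed identity after pulling the linear map inside the sum over $k$ and reindexing the double sum.

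There is no real obstacle beyond careful bookkeeping — the only thing to watch is that each transition map appearing in an intermediate step has its source index at most its target index (so that the composition identities we invoke are meaningful), but this is guaranteed by the supports enforced when $\Mat_{B_X}^{B_Y}(\alpha)(j,i)$ and $\Mat_{B_Y}^{B_X}(\beta)(k,j)$ are nonzero, together with the prior lemma bounding $\birth(y_j)\leq \birth(x_i)+\epsilon$ and $\birth(x_k)\leq \birth(y_j)+\epsilon$. Thus the statement follows purely from unwinding the definitions and invoking commutativity of $\beta$ with transitions; no deeper structural argument is needed.
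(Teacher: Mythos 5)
Your proposal is correct and follows essentially the same line of reasoning as the paper's proof: expand $\alpha_{\birth(x_i)}(x_i)$ via its matrix, push $\beta$ through the transition map using the interleaving commutativity, expand $\beta_{\birth(y_j)}(y_j)$ via its matrix, and compose the $\phi$ transition maps. The paper performs exactly this chain of substitutions, so there is nothing further to add.
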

 
 \begin{proof}
\begin{align*}
\beta_{\birth(x_i)+\epsilon}(\alpha_{\birth(x_i)}(x_i))&=\beta_{\birth(x_i)+\epsilon}\left(\sum_j \Mat_{B_X}^{B_Y}(\alpha)(j,i)  \psi_{\birth(y_j)}^{\birth(x_i)+\epsilon}(y_j)\right)\\
&=\sum_j \Mat_{B_X}^{B_Y}(\alpha)(j,i)  \beta_{\birth(x_i)+\epsilon}( \psi_{\birth(y_j)}^{\birth(x_i)+\epsilon}(y_j))\\
&=\sum_j \Mat_{B_X}^{B_Y}(\alpha)(j,i) \phi_{\birth(y_j)+\epsilon}^{\birth(x_i)+2\epsilon}(\beta_{\birth(y_j)}(y_j))\\
&=\sum_j \Mat_{B_X}^{B_Y}(\alpha)(j,i) \phi_{\birth(y_j)+\epsilon}^{\birth(x_i)+2\epsilon}(\sum_k  \Mat_{B_Y}^{B_X}(\beta)(k,j) \phi_{\birth(x_k)}^{\birth(y_j)+\epsilon})\\
&= \sum_{j,k} \Mat_{B_X}^{B_Y}(\alpha)(j,i)   \Mat_{B_Y}^{B_X}(\beta)(k,j) \phi_{\birth(x_k)}^{\birth(x_i)+2\epsilon}(x_k)
\end{align*}
\end{proof}

We want to relate the $\epsilon$-morphisms within an interleaving (for sufficiently small $\epsilon$) to basis transformation matrices.  The main consideration is how the natural ordering amoungst the intervals changes. There is a natural partial order on $\R^2$ with $(b_1,d_1)\leq (b_2,d_2)$ whenever $b_1\leq b_2$ and $d_1\leq d_2$. This partial order induces a partial order on the set of intervals within a barcode and from this we have a natural partial order on the basis elements associated to each of the intervals. 

\begin{definition}
Let $x_i, x_j$ be basis elements of persistence module $\Mod{X}$. We say $x_j \leq x_i$ if $\birth(x_i)\leq \birth(x_j)$ and $\death(x_j)\leq \death(x_i)$.
\end{definition}

We start with the (boring) case where the order of the critical values do not change and later we will consider what can happen when critical values coincide. Here the partial order stays the same, even with some $\epsilon$ wiggle room.

\begin{prop}\label{prop:boring}
Let $\Mod{X}$ and $\Mod{Y}$ be persistence modules where all critical values are distinct and the difference between pairs of critical values within a persistence module is greater than $2\epsilon$, and $\alpha: \Mod{X} \to \Mod{Y}$ and $\beta: \Mod{Y} \to \Mod{X}$ form an $\epsilon$-interleaving. This implies there must be the same number of intervals $\Mod{X}$ and $\Mod{Y}$ and we can pair them up so that the births and deaths vary by at most $\epsilon$. 

For any choice of basis $B_X=\{x_i\}$ for $\Mod{X}$ and  $B^{old}_Y=\{y_i\}$ for $\Mod{Y}$ such that $|\birth(x_i)-\birth(y_i)|<\epsilon$ and $|\death(x_i)-\death(y_i)|<\epsilon$ for all $i$, we have $\Mat_{B_X}^{B^{old}_Y}(\alpha)$ is a basis transformation matrix for $\Mod{Y}$. 

Let $B_Y^{new}= \Mat_{B_X}^{B^{old}_Y}(\alpha)B_Y$ be the new basis for $\Mod{Y}$. Then both $\Mat_{B_X}^{B_Y^{new}}(\alpha)$ and $\Mat_{B_Y^{new}}^{B_X}(\beta)$ are the identity matrix.
\end{prop}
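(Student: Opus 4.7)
The plan is to carry out three stages: establish the pairing of intervals, verify the basis transformation property of $\Mat_{B_X}^{B_Y^{old}}(\alpha)$, and then use Lemmas \ref{lem:algebra} and \ref{lem:identity} to reduce both $\alpha$ and $\beta$ to the identity after the change of basis.

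First, by the isometry theorem, an $\epsilon$-interleaving induces a bottleneck matching at cost $\leq \epsilon$. Since every interval has length $> 2\epsilon$, no interval can be matched to the diagonal, so the matching is a bijection between the interval decompositions; uniqueness of the bijection (hence that the number of intervals agrees and births and deaths move by at most $\epsilon$) follows from the assumption that critical values within each module are separated by more than $2\epsilon$.

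Next I would show $\Mat_{B_X}^{B_Y^{old}}(\alpha)$ is a basis transformation matrix. The ordering condition follows directly from the lemma characterising non-zero entries: if $\Mat(\alpha)(j,i)\neq 0$, then $\birth(y_j)\leq \birth(x_i)+\epsilon < \birth(y_i^{old})+2\epsilon$, and separation of critical values in $\Mod{Y}$ upgrades this to $\birth(y_j)\leq \birth(y_i^{old})$; similarly $\death(y_j)\leq \death(y_i^{old})$. To show the diagonal entries are non-zero, apply Lemma~\ref{lem:algebra} at $x_i$ and use the interleaving identity on the left:
$$\phi_{\birth(x_i)}^{\birth(x_i)+2\epsilon}(x_i) \;=\; \beta_{\birth(x_i)+\epsilon}(\alpha_{\birth(x_i)}(x_i)) \;=\; \sum_{j,k}\Mat(\alpha)(j,i)\,\Mat(\beta)(k,j)\,\phi_{\birth(x_k)}^{\birth(x_i)+2\epsilon}(x_k).$$
Reading off the coefficient of $\phi_{\birth(x_i)}^{\birth(x_i)+2\epsilon}(x_i)$ on the right: if both $\Mat(\alpha)(j,i)\neq 0$ and $\Mat(\beta)(i,j)\neq 0$ then the two applications of the characterisation give $|\birth(y_j)-\birth(x_i)|\leq \epsilon$, hence $|\birth(y_j)-\birth(y_i^{old})|<2\epsilon$, and separation forces $j=i$. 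So the coefficient is exactly $\Mat(\alpha)(i,i)\Mat(\beta)(i,i)=1$, whence $\Mat(\alpha)(i,i)\neq 0$. Lemma~\ref{lem:identity} then immediately gives $\Mat_{B_X}^{B_Y^{new}}(\alpha)=\Id$.

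Finally, for $\Mat_{B_Y^{new}}^{B_X}(\beta)$: by the symmetric version of the argument just given it is already a basis transformation matrix for $\Mod{X}$. Apply Lemma~\ref{lem:algebra} once more, now using $\Mat_{B_X}^{B_Y^{new}}(\alpha)=\Id$ to collapse the double sum, yielding
$$\phi_{\birth(x_i)}^{\birth(x_i)+2\epsilon}(x_i) \;=\; \sum_k \Mat_{B_Y^{new}}^{B_X}(\beta)(k,i)\,\phi_{\birth(x_k)}^{\birth(x_i)+2\epsilon}(x_k).$$
The subtle point, and the main obstacle, is that \emph{a priori} a short-lived $x_k$ with $\death(x_k)\leq \birth(x_i)+2\epsilon$ could contribute an invisible term vanishing under the transition map. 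This is dispatched by separation applied to the two distinct critical values $\death(x_k)$ and $\birth(x_i)$ of $\Mod{X}$: the characterisation of non-zero entries of $\beta$ gives $\death(x_k)>\birth(y_i^{new})+\epsilon>\birth(x_i)$, and separation then forces the stronger $\death(x_k)>\birth(x_i)+2\epsilon$. Every term in the sum is therefore a genuine basis element of $X_{\birth(x_i)+2\epsilon}$, linear independence forces $\Mat_{B_Y^{new}}^{B_X}(\beta)(k,i)=\delta_{ki}$, and $\Mat_{B_Y^{new}}^{B_X}(\beta)=\Id$ as required.
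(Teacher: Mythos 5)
Your proposal is correct and follows essentially the same route as the paper: establish the partial-order conditions on the nonzero entries of $\Mat_{B_X}^{B_Y^{old}}(\alpha)$ via the separation hypothesis, use Lemma~\ref{lem:algebra} combined with the interleaving identity to force the diagonal coefficients to be $1$ (hence a basis transformation matrix), invoke Lemma~\ref{lem:identity} to reduce $\alpha$ to the identity, and then reapply Lemma~\ref{lem:algebra} to read off $\Mat_{B_Y^{new}}^{B_X}(\beta) = \Id$. The only cosmetic difference is in how the cross terms are killed: the paper argues that $j\neq i$ forces one of $\Mat(\alpha)(j,i)$, $\Mat(\beta)(i,j)$ to vanish by appealing to the fact that the orderings of critical values in $\Mod{X}$ and $\Mod{Y}$ agree, whereas you combine the two inequalities $\birth(y_j)\leq\birth(x_i)+\epsilon$ and $\birth(x_i)\leq\birth(y_j)+\epsilon$ directly and then apply separation; and you make explicit the ``invisible term'' concern ($\death(x_k)\leq\birth(x_i)+2\epsilon$) that the paper handles implicitly by noting no critical values of $\Mod{X}$ lie in $(\birth(x_i),\birth(x_i)+2\epsilon]$. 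Both are valid; neither changes the substance of the argument.
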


\begin{proof}
%
%


Suppose that $\Mat_{B_X}^{B^{old}_Y}(\alpha)(j,i)\neq 0$. We know $\death(y_j) \leq \death(x_i)+\epsilon$. Combined with our assumption that $|\death(x_j)-\death(y_j)|<\epsilon$ we have $\death(y_j) \leq \death(y_i)+2\epsilon$.  Our assumption that every pair of critical values is at least $2\epsilon$ apart strengthens $\death(y_j) \leq \death(y_i)+2\epsilon$ to $\death(y_j) \leq \death(y_i)$. The same argument can be applied to conclude that 
$\birth(y_j) \leq \birth(y_i)$ for all $(j,i)$ with $\Mat_{B_X}^{B^{old}_Y}(\alpha)(j,i)\neq 0$. 

By Lemma \ref{lem:algebra} 
 $\beta_{\birth(x_i)+\epsilon}(\alpha_{\birth(x_i)}(x_i))= \sum_{j,k} \Mat_{B_X}^{B^{old}_Y}(\alpha)(j,i) \Mat_{B^{old}_Y}^{B_X}(\beta)(k,j)  \phi_{\birth(x_k)}^{\birth(x_i)+2\epsilon}(x_k).$

Since $\alpha$ and $\beta$ form an $\epsilon$-interleaving we have $\beta_{\birth(x_i)+\epsilon}(\alpha_{\birth(x_i)}(x_i))= \phi_{\birth(x_i)}^{\birth(x_i)+2\epsilon}(x_i).$ As the distances between every pair of critical values within $\Mod{X}$ are greater than $2\epsilon$ we know that \{$\phi_{\birth(x_k)}^{\birth(x_i)+2\epsilon}(x_k)\}$ forms a basis for $X_{\birth(x_i)+2\epsilon}$ and thus $$\sum_{j}  \Mat_{B_X}^{B^{old}_Y}(\alpha)(j,i) \Mat_{B^{old}_Y}^{B_X}(\beta)(i,j)=1.$$ Since the order of the critical values in $\Mod{X}$ and $\Mod{Y}$ are the same, we know that for $j\neq i$ that at least one of  $\Mat_{B_X}^{B^{old}_Y}(\alpha)(j,i)=0$ or $\Mat_{B^{old}_Y}^{B_X}(\beta)(i,j)=0$. This implies that $ \Mat_{B_X}^{B^{old}_Y}(\alpha)(i,i)\Mat_{B^{old}_Y}^{B_X}(\beta)(i,i)=1$ and hence $ \Mat_{B_X}^{B^{old}_Y}(\alpha)(i,i)\neq 0$. 

We have now shown that $\Mat_{B_X}^{B_Y^{old}}(\alpha)$ is a basis transformation matrix for $\Mod{Y}$. By Lemma \ref{lem:identity} $\Mat_{B_X}^{B_Y^{new}}(\alpha)$ is the identity. Substituting this into the equation in Lemma \ref{lem:algebra} we see for each $i$ that
$$\phi_{\birth(x_i)}^{\birth(x_i)+2\epsilon}(x_i)=\beta_{\birth(x_i)+\epsilon}(\alpha_{\birth(x_i)}(x_i))= \sum_k \Mat_{B_Y^{new}}^{B_X}(\beta)(k,i)  \phi_{\birth(x_k)}^{\birth(x_i)+2\epsilon}(x_k).$$
Again using that, for each $i$, we know $\{\phi_{\birth(x_k)}^{\birth(x_i)+2\epsilon}(x_k)\}$ forms a basis of $X_{\birth(x_i)+2\epsilon}$, and that no critical values occur in $(\birth(x_i), \birth(x_i)+2\epsilon]$, we can conclude that $\Mat_{B_Y^{new}}^{B_X}(\beta)$ is the identity matrix.
\end{proof}

There are many different cases of segments to consider separately, which are illustrated in Table \ref{table:cases} and Table \ref{table:casesincompatible}. Our simplifying assumptions do reduce the number of cases to consider. For sufficiently close time values where we have the same number of intervals, there is either no change in the ordering of critical values, or a single change between all distinct values and distinct except for a single pair with a single pair equal. In the following table we present the different options. We will want to consider the effect of fixing the basis in $\Mod{X}$ and allowing the basis of $\Mod{Y}$ to vary. This means that the roles of $\Mod{X}$ and $\Mod{Y}$ are not symmetric. The indexing throughout this section will use $v_k$ and $v_l$ as the two vines where a potential change in the order of birth and death times occur and only depict these intervals within the table.

\begin{SCtable}[\sidecaptionrelwidth][h!]
    \caption{The different possible cases of critical values coinciding in $\Mod{X}$ or $\Mod{Y}$ such that whenever we are given an $\epsilon$-interleaving $\alpha:\Mod{X}\to \Mod{Y}$ and $\beta:\Mod{Y} \to \Mod{X}$ and a basis $B_X$ of $\Mod{X}$ then we can find a basis $\Mod{Y}$ so that the  matrices of the interleaving maps is the identity. }
    \label{table:cases}
    \begin{tabular}{c|c|c} 
     Case & $\Mod{X}$ & $\Mod{Y}$ \\
      \hline
1&\Centerstack[l]{\begin{tikzpicture}[scale=0.06]
\draw [] (0,0) -- (0,21);
\draw[fill=black] (0,0) circle (1cm);
\draw (0,22) circle (1cm);
\draw [] (10,0) -- (10,12);
\draw[fill=black] (10,0) circle (1cm);
\draw (10,13) circle (1cm);
\draw[dashed](-5,0)--(15,0);
\end{tikzpicture}}
 & 
     \Centerstack[l]{ \begin{tikzpicture}[scale=0.06]
\draw [] (0,0) -- (0,20);
\draw[fill=black] (0,0) circle (1cm);
\draw (0,21) circle (1cm);
\draw [] (10,-3) -- (10,12);
\draw[fill=black] (10,-3) circle (1cm);
\draw (10,13) circle (1cm);
\draw[dashed](-5,-1.5)--(15,-1.5);
\end{tikzpicture}}
    \\
   2 & 
         \Centerstack[l]{  \begin{tikzpicture}[scale=0.06]
\draw [] (0,0) -- (0,20);
\draw[fill=black] (0,0) circle (1cm);
\draw (0,21) circle (1cm);
\draw [] (10,-3) -- (10,12);
\draw[fill=black] (10,-3) circle (1cm);
\draw (10,13) circle (1cm);
\draw[dashed](-5,-1.5)--(15,-1.5);
\end{tikzpicture}}
    &\Centerstack[l]{ \begin{tikzpicture}[scale=0.06]
\draw [] (0,0) -- (0,21);
\draw[fill=black] (0,0) circle (1cm);
\draw (0,22) circle (1cm);
\draw [] (10,0) -- (10,14);
\draw[fill=black] (10,0) circle (1cm);
\draw (10,15) circle (1cm);
\draw[dashed](-5,0)--(15,0);
\end{tikzpicture}} \\
   3 & 
  \Centerstack[l]{  \begin{tikzpicture}[scale=0.06]
\draw [] (0,0) -- (0,21);
\draw[fill=black] (0,0) circle (1cm);
\draw (0,22) circle (1cm);
\draw [] (10,3) -- (10,12);
\draw[fill=black] (10,3) circle (1cm);
\draw (10,13) circle (1cm);
\draw[dashed](-5,1.5)--(15,1.5);
\end{tikzpicture}}
   &\Centerstack[l]{  \begin{tikzpicture}[scale=0.06]
\draw [] (0,0) -- (0,21);
\draw[fill=black] (0,0) circle (1cm);
\draw (0,22) circle (1cm);
\draw [] (10,0) -- (10,12);
\draw[fill=black] (10,0) circle (1cm);
\draw (10,13) circle (1cm);
\draw[dashed](-5,0)--(15,0);
\end{tikzpicture} }
 \\
      4 & 
     \Centerstack[l]{  \begin{tikzpicture}[scale=0.06]
\draw [] (0,-1) -- (0,-21);
\draw (0,0) circle (1cm);
\draw[fill=black] (0,-22) circle (1cm);
\draw [] (10,-1) -- (10,-12);
\draw (10,0) circle (1cm);
\draw[fill=black] (10,-13) circle (1cm);
\draw[dashed](-5,0)--(15,0);
\end{tikzpicture} }
& 
     \Centerstack[l]{     \begin{tikzpicture}[scale=0.06]
\draw [] (0,-1) -- (0,-21);
\draw  (0,0) circle (1cm);
\draw[fill=black](0,-22) circle (1cm);
\draw [] (10,2) -- (10,-12);
\draw (10,3) circle (1cm);
\draw[fill=black] (10,-13) circle (1cm);
\draw[dashed](-5,1.5)--(15,1.5);
\end{tikzpicture}}
\\
      5 & 
         \Centerstack[l]{     \begin{tikzpicture}[scale=0.06]
\draw [] (0,-1) -- (0,-21);
\draw  (0,0) circle (1cm);
\draw[fill=black](0,-22) circle (1cm);
\draw [] (10,2) -- (10,-12);
\draw (10,3) circle (1cm);
\draw[fill=black] (10,-13) circle (1cm);
\draw[dashed](-5,1.5)--(15,1.5);
\end{tikzpicture}}& 
              \Centerstack[l]{   \begin{tikzpicture}[scale=0.06]
\draw [] (0,-1) -- (0,-21);
\draw (0,0) circle (1cm);
\draw[fill=black] (0,-22) circle (1cm);
\draw [] (10,-1) -- (10,-12);
\draw (10,0) circle (1cm);
\draw[fill=black] (10,-13) circle (1cm);
\draw[dashed](-5,0)--(15,0);
\end{tikzpicture}} \\
      6 &     \Centerstack[l]{  \begin{tikzpicture}[scale=0.06]
\draw [] (0,-1) -- (0,-21);
\draw (0,0) circle (1cm);
\draw[fill=black]  (0,-22) circle (1cm);
\draw [] (10,-4) -- (10,-12);
\draw(10,-3) circle (1cm);
\draw[fill=black]  (10,-13) circle (1cm);
\draw[dashed](-5,-1.5)--(15,-1.5);
\end{tikzpicture}} &
      \Centerstack[l]{  \begin{tikzpicture}[scale=0.06]
\draw [] (0,-1) -- (0,-21);
\draw (0,0) circle (1cm);
\draw[fill=black] (0,-22) circle (1cm);
\draw [] (10,-1) -- (10,-12);
\draw (10,0) circle (1cm);
\draw[fill=black] (10,-13) circle (1cm);
\draw[dashed](-5,0)--(15,0);
\end{tikzpicture} }  \\   
      7 &
        \Centerstack[l]{   \begin{tikzpicture}[scale=0.06]
\draw [] (0,1.5) -- (0,9);
\draw[fill=black] (0,1.5) circle (1cm);
\draw (0,10) circle (1cm);
\draw [] (10,-2.5) -- (10,-10);
\draw (10,-1.5) circle (1cm);
\draw[fill=black] (10,-9) circle (1cm);
\draw[dashed](-5,0)--(15,0);
\end{tikzpicture}}
       & 
          \Centerstack[l]{   \begin{tikzpicture}[scale=0.06]
\draw [] (0,0) -- (0,9);
\draw[fill=black] (0,0) circle (1cm);
\draw (0,10) circle (1cm);
\draw [] (10,-1) -- (10,-10);
\draw (10,0) circle (1cm);
\draw[fill=black] (10,-10) circle (1cm);
\draw[dashed](-5,0)--(15,0);
\end{tikzpicture}}
  \\
     8 & 
    \Centerstack[l]{    \begin{tikzpicture}[scale=0.06]
\draw [] (0,-1.5) -- (0,9);
\draw[fill=black] (0,-1.5) circle (1cm);
\draw (0,10) circle (1cm);
\draw [] (10,0.5) -- (10,-10);
\draw (10,1.5) circle (1cm);
\draw[fill=black] (10,-9) circle (1cm);
\draw[dashed](-5,0)--(15,0);
\end{tikzpicture}}
 &        \Centerstack[l]{      \begin{tikzpicture}[scale=0.06]
\draw [] (0,0) -- (0,9);
\draw[fill=black] (0,0) circle (1cm);
\draw (0,10) circle (1cm);
\draw [] (10,-1) -- (10,-10);
\draw (10,0) circle (1cm);
\draw[fill=black] (10,-10) circle (1cm);
\draw[dashed](-5,0)--(15,0);
\end{tikzpicture}}
  \\
           9 &            
        \Centerstack[l]{     \begin{tikzpicture}[scale=0.06]
\draw [] (0,0) -- (0,9);
\draw[fill=black] (0,0) circle (1cm);
\draw (0,10) circle (1cm);
\draw [] (10,-1) -- (10,-10);
\draw (10,0) circle (1cm);
\draw[fill=black] (10,-10) circle (1cm);
\draw[dashed](-5,0)--(15,0);
\end{tikzpicture}} & 
      \Centerstack[l]{   \begin{tikzpicture}[scale=0.06]
\draw [] (0,1.5) -- (0,9);
\draw[fill=black] (0,1.5) circle (1cm);
\draw (0,10) circle (1cm);
\draw [] (10,-2.5) -- (10,-10);
\draw (10,-1.5) circle (1cm);
\draw[fill=black] (10,-9) circle (1cm);
\draw[dashed](-5,0)--(15,0);
\end{tikzpicture}}
\\
      10 &         \Centerstack[l]{     \begin{tikzpicture}[scale=0.06]
\draw [] (0,0) -- (0,9);
\draw[fill=black] (0,0) circle (1cm);
\draw (0,10) circle (1cm);
\draw [] (10,-1) -- (10,-10);
\draw (10,0) circle (1cm);
\draw[fill=black] (10,-9) circle (1cm);
\draw[dashed](-5,0)--(15,0);
\end{tikzpicture} }& 
   \Centerstack[l]{  \begin{tikzpicture}[scale=0.06]
\draw [] (0,-1.5) -- (0,9);
\draw[fill=black] (0,-1.5) circle (1cm);
\draw (0,10) circle (1cm);
\draw [] (10,0.5) -- (10,-10);
\draw (10,1.5) circle (1cm);
\draw[fill=black] (10,-9) circle (1cm);
\draw[dashed](-5,0)--(15,0);
\end{tikzpicture}}
 \\
    \end{tabular}
\end{SCtable}

It turns out that we can apply the same proof from Proposition \ref{prop:boring} to cover all of the non-special cases without much need for amended. 

\begin{prop}\label{prop:simplifies}
Let $\Mod{X}$ and $\Mod{Y}$ be persistence modules such that the order chages in the order of the critical values fit one of cases 1-10 in Table \ref{table:cases} with the depicted vines involved in the change in the order of critical values being $\gamma_k$ and $\gamma_l$. Further assume that all pairwise differences of critical values are at least $2\epsilon$ except for the following:
\begin{itemize}
\item $|\birth(x_k)-\birth(x_l)|$ and  $|\birth(y_k)-\birth(y_l)|$ in cases $1$, $2$, and $3$,
\item $|\death(x_k)-\death(x_l)|$ and  $|\death(y_k)-\death(y_l)|$ in cases $4$, $5$ and $6$
\item $|\birth(x_k)-\death(x_l)|$ and $|\birth(y_k)-\death(y_l)|$ in cases $7$, $8$, $9$, and $10$.
\end{itemize}

Suppose that $\alpha: \Mod{X} \to \Mod{Y}$ and $\beta: \Mod{Y} \to \Mod{X}$ form an $\epsilon$-interleaving. 
This implies there must be the same number of intervals $\Mod{X}$ and $\Mod{Y}$ and we have paired them up so that the births and deaths vary by at most $\epsilon$. 

For any choice of basis $B_X=\{x_i\}$ for $\Mod{X}$ and  $B^{old}_Y=\{y^{old}_i\}$ for $\Mod{Y}$ such that $|\birth(x_i)-\birth(y^{old}_i)|<\epsilon$ and $|\death(x_i)-\death(y^{old}_i)|<\epsilon$ for all $i$, we have $\Mat_{B^{old}_Y}^{B_X}(\beta)$ is a basis transformation matrix for $\Mod{Y}$. 

Let $B_Y^{new}= \Mat_{B_X}^{B^{old}_Y}(\alpha)B^{old}_Y$ be the new basis for $\Mod{Y}$. Then both $\Mat_{B_X}^{B_Y^{new}}(\alpha)$ and 
$\Mat_{B_Y^{new}}^{B_X}(\beta)$ are the identity matrix. 
\end{prop}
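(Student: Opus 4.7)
The plan is to mimic the proof of Proposition \ref{prop:boring}, with the uniform $2\epsilon$-separation of critical values replaced by a direct case-by-case check that the exceptional coinciding pair $(k,l)$ does not disturb the basis transformation structure. There are three steps: (i) verify that $\Mat_{B_X}^{B^{old}_Y}(\alpha)$ is a basis transformation matrix for $\Mod{Y}$; (ii) show its diagonal entries are nonzero using the interleaving identity; (iii) conclude via Lemma \ref{lem:identity} together with Lemma \ref{lem:algebra} that both matrices after the basis change are the identity.

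For step (i), whenever $\Mat_{B_X}^{B^{old}_Y}(\alpha)(j,i)\neq 0$ the matrix-entry constraints give $\birth(y_j)\leq \birth(x_i)+\epsilon$ and $\death(y_j)\leq \death(x_i)+\epsilon$; combined with $|\birth(x_i)-\birth(y_i)|<\epsilon$ and $|\death(x_i)-\death(y_i)|<\epsilon$ this yields $\birth(y_j)\leq \birth(y_i)+2\epsilon$ and $\death(y_j)\leq \death(y_i)+2\epsilon$. Whenever the underlying pairs of critical values of $\Mod{Y}$ are separated by at least $2\epsilon$, these sharpen to $\birth(y_j)\leq \birth(y_i)$ and $\death(y_j)\leq \death(y_i)$. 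The only pairs that may fail $2\epsilon$-separation are the listed exceptional pairs for $(k,l)$. For each of the ten configurations of Table \ref{table:cases} I would verify by hand that either (a) the off-diagonal entry between $k$ and $l$ whose presence would violate the basis transformation condition is automatically zero, because the \emph{opposite} endpoint constraint (death in cases 1--3 and 7--10, birth in cases 4--6) is violated, or (b) the off-diagonal entry that is permitted to be nonzero satisfies both basis transformation inequalities automatically.

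For step (ii), the $\epsilon$-interleaving identity $\beta_{\birth(x_i)+\epsilon}\circ\alpha_{\birth(x_i)}=\phi_{\birth(x_i)}^{\birth(x_i)+2\epsilon}$ together with Lemma \ref{lem:algebra} gives
$$\phi_{\birth(x_i)}^{\birth(x_i)+2\epsilon}(x_i)=\sum_{j,m}\Mat_{B_X}^{B^{old}_Y}(\alpha)(j,i)\,\Mat_{B^{old}_Y}^{B_X}(\beta)(m,j)\,\phi_{\birth(x_m)}^{\birth(x_i)+2\epsilon}(x_m).$$
In each of the ten cases no critical value of $\Mod{X}$ lies strictly inside $(\birth(x_i),\birth(x_i)+2\epsilon]$, since the only close pairs in $\Mod{X}$ are pairs of births, pairs of deaths, or a birth equal to a death, none of which place another critical value strictly after $\birth(x_i)$ within $2\epsilon$. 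Hence $\{\phi_{\birth(x_m)}^{\birth(x_i)+2\epsilon}(x_m)\}$ is a basis of $X_{\birth(x_i)+2\epsilon}$, and comparing coefficients of $x_i$ gives $\sum_{j}\Mat_{B_X}^{B^{old}_Y}(\alpha)(j,i)\Mat_{B^{old}_Y}^{B_X}(\beta)(i,j)=1$. A short further check shows that for $j\neq i$ at least one factor vanishes: if both were nonzero then $|\birth(y_j)-\birth(x_i)|\leq\epsilon$ and $|\death(y_j)-\death(x_i)|\leq\epsilon$, forcing $|\birth(y_j)-\birth(y_i)|<2\epsilon$ and $|\death(y_j)-\death(y_i)|<2\epsilon$ simultaneously. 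In each of the ten cases the listed exceptions permit at most \emph{one} of birth-closeness and death-closeness between $y_k$ and $y_l$, never both, so this forces $j=i$. Therefore $\Mat_{B_X}^{B^{old}_Y}(\alpha)(i,i)\neq 0$ for every $i$, completing the verification that the matrix is a basis transformation.

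For step (iii), Lemma \ref{lem:identity} now applies and delivers $\Mat_{B_X}^{B_Y^{new}}(\alpha)=\Id$. Substituting this back into Lemma \ref{lem:algebra} and invoking once more the basis property of $\{\phi_{\birth(x_m)}^{\birth(x_i)+2\epsilon}(x_m)\}$ forces $\Mat_{B_Y^{new}}^{B_X}(\beta)=\Id$, exactly as at the end of Proposition \ref{prop:boring}. The main obstacle is the fiddly but finite case-by-case verification in steps (i) and (ii); each of the ten configurations must be examined separately, tracking which endpoints of $y_k$ and $y_l$ are within $2\epsilon$ of each other and using the complementary endpoint to kill the unwanted off-diagonal entry and the unwanted cross terms in the diagonal sum. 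This bookkeeping is conceptually straightforward but must be carried out explicitly because the sign of the relevant inequality flips across the different configurations.
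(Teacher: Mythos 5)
Your steps (i) and (ii) follow the paper's approach and are sound. Step (i) matches the paper's strategy of splitting the verification into the generic index pairs (handled by the $2\epsilon$-separation) and the exceptional pairs $(k,l)$ and $(l,k)$ (handled by a case-by-case check in Table~\ref{table:cases}). Step (ii)'s argument that for $j\neq i$ at most one of birth-closeness and death-closeness between $y_j$ and $y_i$ can hold is correct and gives a clean route to $\Mat_{B_X}^{B_Y^{old}}(\alpha)(i,i)\neq 0$.

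The gap is in step (iii). You claim the conclusion $\Mat_{B_Y^{new}}^{B_X}(\beta)=\Id$ follows ``exactly as at the end of Proposition~\ref{prop:boring},'' but the argument there relies crucially on a hypothesis you no longer have: in Proposition~\ref{prop:boring} all critical values are $2\epsilon$-separated, so no critical values of $\Mod{X}$ lie in $(\birth(x_i),\birth(x_i)+2\epsilon]$, and hence \emph{every} generator alive at $\birth(x_i)$ remains alive (nonzero) after pushing forward $2\epsilon$. That is what lets one conclude all the off-diagonal coefficients vanish simply by comparing coordinates in the basis of $X_{\birth(x_i)+2\epsilon}$. In Proposition~\ref{prop:simplifies}, cases 7--10 explicitly permit a death value of $\Mod{X}$ to lie within $2\epsilon$ of a birth value $\birth(x_i)$. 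If $\death(x_j)\in(\birth(x_i),\birth(x_i)+2\epsilon]$ for some $j\neq i$, then $\phi_{\birth(x_j)}^{\birth(x_i)+2\epsilon}(x_j)=0$, and the identity
$$\phi_{\birth(x_i)}^{\birth(x_i)+2\epsilon}(x_i)= \sum_m \Mat_{B_Y^{new}}^{B_X}(\beta)(m,i)\,\phi_{\birth(x_m)}^{\birth(x_i)+2\epsilon}(x_m)$$
places no constraint at all on $\Mat_{B_Y^{new}}^{B_X}(\beta)(j,i)$. A separate argument is needed to kill that entry: one must use the defining constraint on $\Mat_{B_Y^{new}}^{B_X}(\beta)(j,i)\neq 0$, namely $\birth(x_j)\le\birth(y_i^{new})+\epsilon<\death(x_j)$, together with $|\birth(y_i^{new})-\birth(x_i)|<\epsilon$, to pin $\death(x_j)$ into the open interval $(\birth(x_i),\birth(x_i)+2\epsilon)$, and then check against the explicit configurations in Table~\ref{table:cases} that this situation cannot actually arise (the paper isolates case 10 with $j=k$ as the only candidate and derives a contradiction). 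Your proof omits this step, which is precisely where Proposition~\ref{prop:simplifies} departs from Proposition~\ref{prop:boring}.
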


\begin{proof}

We will show that $\Mat_{B_X}^{B^{old}_Y}(\alpha)(j,i)\neq 0$ implies $\birth(y^{old}_j)\leq \birth(y^{old}_i)<\death(y^{old}_j)\leq \death(y^{old}_j)$. To do this we will split into different options for $(j,i)$. 

Suppose that $(j,i)$ is neither $(k,l)$ nor $(l,k)$. 
If $\Mat_{B_X}^{B^{old}_Y}(\alpha)(j,i)\neq 0$ then by definition that 
$$\birth(y_j^{old})\leq \birth(x_i) +\epsilon <\death(y_j^{old}) \leq \death(x_i)+\epsilon.$$ 
Our pairing of intervals tells us that $|\birth(x_i)-\birth(y^{old}_i)|<\epsilon$ and $|\death(x_i)-\death(y^{old}_i)|<\epsilon$. Together these inequalities imply
$\death(y_j^{old}) \leq \death(y^{old}_i)+2\epsilon$ and $\birth(y^{old}_j) \leq \birth(y^{old}_i)+2\epsilon$.

We have assumed that $|\death(y^{old}_j)-\death(y^{old}_i)|>2\epsilon$  and $|\birth(y^{old}_j)-\birth(y^{old}_i)|>2\epsilon$. These strengthen $\death(y_j^{old}) \leq \death(y^{old}_i)+2\epsilon$ to $\death(y_j^{old}) \leq \death(y^{old}_i)$ and $\birth(y^{old}_j) \leq \birth(y^{old}_i)+2\epsilon$ to $\birth(y^{old}_j) \leq \birth(y^{old}_i)$.  Thus $\Mat_{B_X}^{B^{old}_Y}(\alpha)(j,i)\neq 0$ implies $\birth(y^{old}_j)\leq \birth(y^{old}_i)<\death(y^{old}_j)\leq \death(y^{old}_j)$. 

Now consider $(j,i)=(l,k)$
In all cases we have $\birth(y^{old}_l)\leq \birth(y^{old}_k)$ and $\death(y^{old}_l)\leq \death(y^{old}_k)$
so whether $\Mat_{B_X}^{B_Y^{old}}(\alpha)(k,l)$ is non-zero or not causes no obstruction for $\Mat_{B_X}^{B_Y^{old}}(\alpha)$ being a basis transformation matrix for $\Mod{Y}$. 

Finally consider $(j,i)=(k,l)$. Here $\Mat_{B_X}^{B_Y^{old}}(\alpha)(k,l)$ is always zero. 
The reasoning in each case is as follows.
In cases $1$, $2$ and $3$ we have $\birth(y^{old}_k)>\birth(y^{old}_l)+2\epsilon$ so $\birth(y^{old}_k)>\birth(x^{old}_l)+\epsilon$. 
In cases $4$, $5$ and $6$ we have $\death(y^{old}_k)>\death(y^{old}_l)+2\epsilon$ so $\death(y^{old}_k)>\death(x^{old}_l)+\epsilon$.
In cases $7$ and $8$ we have $\birth(y^{old}_k)=\death(y^{old}_l)$ which implies  $\birth(x^{old}_k)+\epsilon >\death(y^{old}_l)$.
In cases $9$ and $10$ we have $\birth(x^{old}_k)=\death(x^{old}_l)$ which implies  $\birth(x^{old}_k)+\epsilon >\death(y^{old}_l)$.
Having covered all the cases we can state that $\Mat_{B_X}^{B^{old}_Y}(\alpha)(j,i)\neq 0$ implies $\birth(y^{old}_j)\leq \birth(y^{old}_i)$ and $\death(y^{old}_j)\leq \death(y^{old}_j)$ for all $(j,i)$. 

From Lemma \ref{lem:algebra}
$$\beta_{\birth(x_i)+\epsilon}(\alpha_{\birth(x_i)}(x_i))= \sum_{j,k}  \Mat_{B_X}^{B^{old}_Y}(\alpha)(j,i) \Mat_{B^{old}_Y}^{B_X}(\beta)(k,j)   \phi_{\birth(x_k)}^{\birth(x_i)+2\epsilon}(x_k).$$
Since $\beta_{\birth(x_i)+\epsilon}(\alpha_{\birth(x_i)}(x_i))=\phi_{\birth(x_i)}^{\birth(x_i)+2\epsilon}(x_i)$ and the $ \{ \phi_{\birth(x_k)}^{\birth(x_i)+2\epsilon}(x_k)|\birth(x_k)\leq \birth(x_i)+2\epsilon <\death(x_k)\}$ form a basis for $X_{\birth(x_i)+2\epsilon}$ we know that $\sum_{j}  \Mat_{B_X}^{B^{old}_Y}(\alpha)(j,i) \Mat_{B^{old}_Y}^{B_X}(\beta)(k,j) =1$
We thus have shown that $ \Mat_{B_X}^{B^{old}_Y}(\alpha)$ is a basis transformation matrix for $\Mod{Y}$. Furthermore, by Lemma \ref{lem:identity} we automatically have $ \Mat_{B_X}^{B^{new}_Y}(\alpha)$ is the identity matrix.

We now wish to show that $\Mat_{B_Y^{new}}^{B_X}(\beta)$ is also the identity matrix. Substituting $\Mat_{B_X}^{B_Y^{new}}(\alpha)=\Id$ into the equation in Lemma \ref{lem:algebra} we see for each $i$ that
$$\phi_{\birth(x_i)}^{\birth(x_i)+2\epsilon}(x_i)=\beta_{\birth(x_i)+\epsilon}(\alpha_{\birth(x_i)}(x_i))= \sum_j \Mat_{B_Y^{new}}^{B_X}(\beta)(j,i)  \phi_{\birth(x_j)}^{\birth(x_i)+2\epsilon}(x_j).$$

For each $i$, we know $\{\phi_{\birth(x_j)}^{\birth(x_i)+2\epsilon}(x_j)| \birth(x_j)\leq \birth(x_i)+2\epsilon<\death(x_j)\}$ forms a basis of $X_{\birth(x_i)+2\epsilon}$. This implies that $\Mat_{B_Y^{new}}^{B_X}(\beta)(i,i)=1$ and if $ \Mat_{B_Y^{new}}^{B_X}(\beta)(j,i)\neq 0$, for some $j\neq i$, then $$\death(x_i)\leq \birth(x_j)+2\epsilon.$$ By definition $ \Mat_{B_Y^{new}}^{B_X}(\beta)(j,i)\neq 0$ also implies that 
$\birth(y_i^{new})+\epsilon<\death(x_j)$. 
%
As $|\birth(y_i^{new})-\birth(x_i)|<\epsilon$ we conclude that $\death(x_j)\in (\birth(x_i), \birth(x_i)+2\epsilon)$. 
Given our assumptions the only case where this could occur is case $10$ with $j=k$, and here $\death(y_j)=\birth(y_j)$. However this implies  $\birth(y_i^{new})+\epsilon<\death(x_j)$ as $|\birth(y_i^{new})-\birth(x_i)|<\epsilon$. This is a contradiction.


We thus have shown that $\Mat_{B_Y^{new}}^{B_X}(\beta)=\I$.
\end{proof}

The remaining two cases are the ones which stop the automatic decomposition of vineyard modules into a sum of vine modules. These are illustrated in Table \ref{table:casesincompatible}.

\begin{SCtable}[\sidecaptionrelwidth][h]
     \caption{The cases when, for a fixed basis of $\Mod{X}$, we can't guarantee to find a basis of $\Mod{Y}$ so that the matrices of the interleaving maps are the identity. If the first of the two intervals corresponds to basis elements $x_k$ and $y_k$ and the second interval to $x_l$ and $y_l$ then we have $x_l\leq x_k$ but $y_l\nleq y_k$.}
    \label{table:casesincompatible}
    \begin{tabular}{c|c|c} 
     Case & $\Mod{X}$ & $\Mod{Y}$ \\
      \hline
      11
  &          \Centerstack[l]{   \begin{tikzpicture}[scale=0.06]
\draw [] (10,-1) -- (10,-21);
\draw (10,0) circle (1cm);
\draw[fill=black] (10,-22) circle (1cm);
\draw [] (0,-1) -- (0,-12);
\draw (0,0) circle (1cm);
\draw[fill=black] (0,-13) circle (1cm);
\draw[dashed](-5,0)--(15,0);
\end{tikzpicture}  }& 
   \Centerstack[l]{ \begin{tikzpicture}[scale=0.06]
\draw [] (10,-1) -- (10,-21);
\draw (10,0) circle (1cm);
\draw[fill=black]  (10,-22) circle (1cm);
\draw [] (0,-4) -- (0,-12);
\draw(0,-3) circle (1cm);
\draw[fill=black]  (0,-13) circle (1cm);
\draw[dashed](-5,-1.5)--(15,-1.5);
\end{tikzpicture}   } \\
12       & 
     \Centerstack[l]{  \begin{tikzpicture}[scale=0.06]
\draw [] (0,0) -- (0,21);
\draw[fill=black] (0,0) circle (1cm);
\draw (0,22) circle (1cm);
\draw [] (10,0) -- (10,12);
\draw[fill=black] (10,0) circle (1cm);
\draw (10,13) circle (1cm);
\draw[dashed](-5,0)--(15,0);
\end{tikzpicture}}
 & 
   \Centerstack[l]{  \begin{tikzpicture}[scale=0.06]
\draw [] (0,0) -- (0,21);
\draw[fill=black] (0,0) circle (1cm);
\draw (0,22) circle (1cm);
\draw [] (10,3) -- (10,12);
\draw[fill=black] (10,3) circle (1cm);
\draw (10,13) circle (1cm);
\draw[dashed](-5,1.5)--(15,1.5);
\end{tikzpicture}}
 \end{tabular}    
\end{SCtable}


\begin{prop}\label{prop:inclusion}
Let $\Mod{X}$ and $\Mod{Y}$ be persistence modules with bases $B_X$ and $B_Y$ and that $\alpha:\Mod{X}\to \Mod{Y}$ and $\beta:\Mod{Y}\to \Mod{X}$ form an $\epsilon$-interleaving.
 Suppose that $x_l\leq x_k$ but $y_l\nleq y_k$ and that all other elements of the preorder remain the same. Further suppose that either
 \begin{itemize}
\item[(a)] $\death(x_k)=\death(x_l)$ and  $|\death(y_k)-\death(y_l)|<\epsilon$ and that all other pairwise differences of critical values are at least $2\epsilon$, or
\item[(b)] $\birth(x_k)=\birth(x_l)$ and  $|\birth(y_k)-\birth(y_l)|<\epsilon$ and that all other pairwise differences of critical values are at least $2\epsilon$..
\end{itemize}

Let $\lambda=\Mat_{B_X}^{B_Y}(\alpha)(l,k)/\Mat_{B_X}^{B_Y}(\alpha)(l,l)$ 
and $e_{lk}^\lambda$ be the elementary matrix with $\lambda$ in the $(l,k)$ entry. Then $\Mat_{B_X}^{B_Y}(\alpha)e_{lk}^{-\lambda} $ is a basis transformation matrix for $\Mod{Y}$.

Furthermore under the new basis $B_Y^{new}$ we have $\Mat_{B_X}^{B_Y^{new}}(\alpha)=e^{\lambda}_{lk}$ and  $\Mat_{B_Y^{new}}^{B_X}(\beta)= e^{-\lambda}_{lk}$.

\end{prop}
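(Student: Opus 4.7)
The plan is to mimic Proposition~\ref{prop:simplifies}: first classify the non-zero entries of $\Mat_{B_X}^{B_Y}(\alpha)$ via the $\epsilon$-morphism inequalities, then cancel the unique obstructing entry by the elementary column operation $A\mapsto Ae_{lk}^{-\lambda}$, and finally read off $\Mat_{B_X}^{B_Y^{new}}(\alpha)$ and $\Mat_{B_Y^{new}}^{B_X}(\beta)$ by invoking Lemmas~\ref{lem:rowmatrix} and~\ref{lem:algebra}.

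I would begin with the Step~1 style birth/death bookkeeping of Proposition~\ref{prop:simplifies}: every non-zero entry $\Mat_{B_X}^{B_Y}(\alpha)(j,i)$ satisfies $\birth(y_j)\leq\birth(y_i)+2\epsilon$ and $\death(y_j)\leq\death(y_i)+2\epsilon$ after combining the $\epsilon$-morphism inequality with the pairing, and the $2\epsilon$-genericity hypothesis upgrades these to $\birth(y_j)\leq\birth(y_i)$ and $\death(y_j)\leq\death(y_i)$ for every $(j,i)$ not involving the exceptional pair indexed by $k$ and $l$. Since $y_l\nleq y_k$ while the remainder of the preorder is preserved, the sole entry of $\Mat_{B_X}^{B_Y}(\alpha)$ that can obstruct the basis-transformation condition is $(l,k)$. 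Applying Lemma~\ref{lem:algebra} with $i=l$ and equating the coefficient of $\phi_{\birth(x_l)}^{\birth(x_l)+2\epsilon}(x_l)$ gives $\sum_j\Mat_{B_X}^{B_Y}(\alpha)(j,l)\Mat_{B_Y}^{B_X}(\beta)(l,j)=1$, and the same genericity argument forces only the diagonal term to survive, so $\Mat_{B_X}^{B_Y}(\alpha)(l,l)\neq 0$ and $\lambda$ is well-defined.

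Next, I would verify that $\Mat_{B_X}^{B_Y}(\alpha)e_{lk}^{-\lambda}$ is a basis-transformation matrix. Right-multiplication by $e_{lk}^{-\lambda}$ modifies only column $k$, replacing each entry $(j,k)$ by $\Mat_{B_X}^{B_Y}(\alpha)(j,k)-\lambda\Mat_{B_X}^{B_Y}(\alpha)(j,l)$. The $(l,k)$ entry vanishes by the construction of $\lambda$; the $(k,k)$ entry remains non-zero by a second application of Lemma~\ref{lem:algebra} at $i=k$ (giving $\Mat_{B_X}^{B_Y}(\alpha)(k,k)\neq 0$) together with a short check that the $\lambda\,\Mat_{B_X}^{B_Y}(\alpha)(k,l)$-correction cannot coincidentally cancel it; and for each other row $j\notin\{k,l\}$ the resulting entry is either zero or carries the birth/death bounds supplied by Step~1 for both $\Mat_{B_X}^{B_Y}(\alpha)(j,k)$ and $\Mat_{B_X}^{B_Y}(\alpha)(j,l)$, which together with the tight $2\epsilon$-genericity and the preorder-preservation assumption yield $\birth(y_j)\leq\birth(y_k)$ and $\death(y_j)\leq\death(y_k)$. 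Having established the basis-transformation property, the side condition $\birth(x_l)+\epsilon<\death(y_k)$ of Lemma~\ref{lem:rowmatrix} follows from the interval-length hypothesis and the pairing, so Lemma~\ref{lem:rowmatrix} delivers $\Mat_{B_X}^{B_Y^{new}}(\alpha)=e_{lk}^{\lambda}$.

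Finally, I would compute $\Mat_{B_Y^{new}}^{B_X}(\beta)$ by substituting $\Mat_{B_X}^{B_Y^{new}}(\alpha)=e_{lk}^{\lambda}$ into Lemma~\ref{lem:algebra} and matching coefficients of $\phi_{\birth(x_{k'})}^{\birth(x_i)+2\epsilon}(x_{k'})$ in the interleaving identity $\beta_{\birth(x_i)+\epsilon}\alpha_{\birth(x_i)}(x_i)=\phi_{\birth(x_i)}^{\birth(x_i)+2\epsilon}(x_i)$. For each $i\neq k$ only the diagonal of $e_{lk}^{\lambda}$ contributes, so the $i$-th column of $\Mat_{B_Y^{new}}^{B_X}(\beta)$ matches the identity. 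Feeding this information into the equation at $i=k$, which now picks up an additional $\lambda$-term from the $(l,k)$ entry of $e_{lk}^{\lambda}$, uniquely determines the $k$-th column of $\Mat_{B_Y^{new}}^{B_X}(\beta)$ as $1$ at $(k,k)$, $-\lambda$ at $(l,k)$, and zeros elsewhere, i.e.\ $e_{lk}^{-\lambda}$. The hardest step will be the combinatorial verification for the modified column $k$: controlling simultaneously, for every row index $j$, the interaction between the column-operation arithmetic and the tight $2\epsilon$-genericity, and in particular ruling out the delicate mixed case in which $\Mat_{B_X}^{B_Y}(\alpha)(j,k)$ vanishes while $\Mat_{B_X}^{B_Y}(\alpha)(j,l)$ does not.
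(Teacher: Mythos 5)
Your proposal follows essentially the same route as the paper's proof: classify the non-zero entries of $\Mat_{B_X}^{B_Y}(\alpha)$ via the $\epsilon$-morphism inequalities, show the column operation $A\mapsto Ae_{lk}^{-\lambda}$ kills the offending $(l,k)$ entry while leaving the diagonal intact, invoke Lemma~\ref{lem:rowmatrix} to read off $\Mat_{B_X}^{B_Y^{new}}(\alpha)=e_{lk}^{\lambda}$, and then run Lemma~\ref{lem:algebra} to extract $\Mat_{B_Y^{new}}^{B_X}(\beta)=e_{lk}^{-\lambda}$. The ``mixed case'' you flag as the hardest step (where $\Mat_{B_X}^{B_Y}(\alpha)(j,k)=0$ but $\Mat_{B_X}^{B_Y}(\alpha)(j,l)\neq 0$) is also left implicit in the paper, but it does close: if $\Mat_{B_X}^{B_Y}(\alpha)(j,l)\neq 0$ for $j\neq k,l$, genericity gives $\birth(y_j)\leq\birth(y_l)$ and $\death(y_j)\leq\death(y_l)$, and in the only case where $\lambda$ can be non-zero (the $\birth(x_k)=\birth(x_l)$ case) one has $\birth(y_l)<\birth(y_k)$ and $\death(y_l)<\death(y_k)$, so both required inequalities chain through $y_l$. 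Your ``short check'' that the $(k,k)$ entry survives is in fact vacuous, since the same bookkeeping forces $\Mat_{B_X}^{B_Y}(\alpha)(k,l)=0$.
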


%
%
%
%
%
%
%
\begin{proof}
We will prove for $(a)$ (case 11 in Table \ref{table:casesincompatible}) and omit the proof for $(b)$ (case 12 in Table \ref{table:casesincompatibe}) as it is highly analogous where we only need to switch the roles of births and deaths.

For $i\neq k$,  $\left(\Mat_{B_X}^{B^{old}_Y}(\alpha)e_{lk}^{-\lambda}\right )(j,i)=  \Mat_{B_X}^{B^{old}_Y}(\alpha)(j,i)$. 
If  $\Mat_{B_X}^{B^{old}_Y}(\alpha)(j,i)\neq 0$ then
$$\birth(y_j^{old})\leq \birth(x_i) +\epsilon <\death(y_j^{old}) \leq \death(x_i)+\epsilon.$$ 
Our pairing of intervals tells us that $|\birth(x_i)-\birth(y^{old}_i)|<\epsilon$ and $|\death(x_i)-\death(y^{old}_i)|<\epsilon$. Together these inequalities imply
$\death(y_j^{old}) \leq \death(y^{old}_i)+2\epsilon$, $\birth(y^{old}_j) \leq \birth(y^{old}_i)+2\epsilon$ and $\birth(y^{old}_i) < \death(y_j^{old})$.

By construction of $e^{-\lambda}_{lk}$ we have 
\begin{align*}
\left(\Mat_{B_X}^{B^{old}_Y}(\alpha)e_{lk}^{-\lambda}\right )(j,k)&=\Mat_{B_X}^{B^{old}_Y}(\alpha)(j,k)-\lambda \Mat_{B_X}^{B^{old}_Y}(\alpha)(j,l)\\
&=\Mat_{B_X}^{B^{old}_Y}(\alpha)(j,k)-\left(\Mat_{B_X}^{B_Y}(\alpha)(l,k)/\Mat_{B_X}^{B_Y}(\alpha)(l,l)\right) \Mat_{B_X}^{B^{old}_Y}(\alpha)(j,l)\\
\end{align*}
In particular $\left(\Mat_{B_X}^{B^{old}_Y}(\alpha)e_{lk}^{-\lambda}\right )(l,k)=0$.


Suppose that $i=k$ but $j\neq l$.
 We have assumed that $|\death(y^{old}_j)-\death(y^{old}_i)|>2\epsilon$  and $|\birth(y^{old}_j)-\birth(y^{old}_i)|>2\epsilon$. These strengthen $\death(y_j^{old}) \leq \death(y^{old}_i)+2\epsilon$ to $\death(y_j^{old}) \leq \death(y^{old}_i)$ and $\birth(y^{old}_j) \leq \birth(y^{old}_i)+2\epsilon$ to $\birth(y^{old}_j) \leq \birth(y^{old}_i)$.  Thus $\Mat_{B_X}^{B^{old}_Y}(\alpha)(j,i)\neq 0$ implies $\birth(y^{old}_j)\leq \birth(y^{old}_i)<\death(y^{old}_j)\leq \death(y^{old}_j)$. 

The same reasoning in Proposition \ref{prop:simplifies} applies to show that $\Mat_{B_X}^{B^{old}_Y}(\alpha)(i,i)\neq 0$ for all $i$.
We thus have shown that $\left(\Mat_{B_X}^{B^{old}_Y}(\alpha)e_{lk}^{-\lambda}\right )$ is a basis transformation matrix for $\Mod{Y}$.
By Lemma \ref{lem:rowmatrix} we know that if for $$B_Y^{new}=\left(\Mat_{B_X}^{B_Y^{old}}(\alpha)e_{lk}^{-\lambda}\right)(B_Y^{old})$$ we have $\Mat_{B_X}^{B_Y^{new}}(\alpha)=e^{\lambda}_{lk}$.

We now wish to show that $\Mat_{B_Y^{new}}^{B_X}(\beta)=e^{-\lambda}_{lk}$. Substituting $\Mat_{B_X}^{B_Y^{new}}(\alpha)=e^{\lambda}_{lk}$ into the equation in Lemma \ref{lem:algebra} we see for each $i\neq k$ that
\begin{align}\label{eq:ineqk}
\phi_{\birth(x_i)}^{\birth(x_i)+2\epsilon}(x_i)=\beta_{\birth(x_i)+\epsilon}(\alpha_{\birth(x_i)}(x_i))= \sum_j \Mat_{B_Y^{new}}^{B_X}(\beta)(j,i)  \phi_{\birth(x_j)}^{\birth(x_i)+2\epsilon}(x_j)
\end{align}
and  $\{\phi_{\birth(x_j)}^{\birth(x_i)+2\epsilon}(x_j)| \birth(x_j)\leq \birth(x_i)+2\epsilon<\death(x_j)\}$ forms a basis of $X_{\birth(x_i)+2\epsilon}$. Immediately this implies that $\Mat_{B_Y^{new}}^{B_X}(\beta)(i,i)=1$. If $ \Mat_{B_Y^{new}}^{B_X}(\beta)(j,i)\neq 0$, for some $j\neq i$, then both  $\death(x_i)\leq \birth(x_j)+2\epsilon$ (by equation \eqref{eq:ineqk}) and $\death(y_i)>\birth(y_j)+\epsilon$ (by definition of matrix of an epsilon morphism) which this contradicts our assumptions so $ \Mat_{B_Y^{new}}^{B_X}(\beta)(j,i)= 0$ for all $j\neq i$.

For $i=k$ Lemma \ref{lem:algebra} combined with the above says
\begin{align*}
\phi_{\birth(x_k)}^{\birth(x_k)+2\epsilon}(x_k)=\beta_{\birth(x_k)+\epsilon}(\alpha_{\birth(x_k)}(x_k))&= \sum_j (\Mat_{B_Y^{new}}^{B_X}(\beta)(j,k) 
+\lambda \Mat_{B_Y^{new}}^{B_X}(\beta)(j,l))  \phi_{\birth(x_j)}^{\birth(x_k)+2\epsilon}(x_j)\\
&=\lambda  \phi_{\birth(x_l)}^{\birth(x_k)+2\epsilon}(x_l)+ \sum_j (\Mat_{B_Y^{new}}^{B_X}(\beta)(j,k) \phi_{\birth(x_j)}^{\birth(x_k)+2\epsilon}(x_j)\\
\end{align*}
This implies that $\Mat_{B_Y^{new}}^{B_X}(\beta)(k,k)=1$ and $\Mat_{B_Y^{new}}^{B_X}(\beta)(k,l)=-\lambda$. The remaining $\Mat_{B_Y^{new}}^{B_X}(\beta)(j,k)=0$ by the same contradiction argument above where $i\neq k$.
\end{proof}


There are in fact two more cases we need to consider which is when the number of intervals changes. We will have a different number of basis elements in $\Mod{X}$ and $\Mod{Y}$. In terms of vineyards these correspond to the situation where a vine moves in or out of the diagonal. We cannot expect the matrices of our interleaving maps to be the identity but we can get the next best thing which is the projection map onto the common set for intervals. This will happen because the interleaving maps will naturally split into a direct sum of morphisms - one over the common intervals and one for the interval present in only one of the persistence modules.

\begin{proposition}\label{prop:newinterval}
Let $\Mod{\hat{X}}$, $\Mod{Y}$ be persistence modules such that all the critical values are distinct and the pairwise distance between any pair of critical values within the same persistence module is greater than $2\epsilon$. Let $N$ denote the number of intervals in $\Mod{Y}$. Set $\Mod{X}=\hat{\Mod{X}} \oplus \Mod{I}[b,d)$ where $d-b<2\epsilon$ and the distance from $b$ or $d$ to any critical value of $\Mod{A}$ is at least $2\epsilon$. Suppose that $\alpha:\Mod{X} \to \Mod{Y}$ and $\beta:\Mod{Y}\to \Mod{X}$ form an $\epsilon$-interleaving. 

Any basis $B_X$ of $\Mod{X}$ will partition into a basis for $\hat{\Mod{X}}$ (which we will denote $B_{\hat{X}}$) plus one other element. Without loss of generality order the basis elements of $\hat{\Mod{X}}$ so those in $\Mod{X}$ appear first. Choose an extended basis $B_Y$ of $\Mod{Y}$ consisting of a basis $\hat{B}_Y$ of $\Mod{Y}$ alongside a single $0$ element appearing last in index order.

Then $\Mat_{B_X}^{B_Y}(\alpha)$ is an extended basis transformation matrix for $B_Y$. Under the new extended basis $B_Y^{new}$ we have 
$$\Mat_{B_X}^{B_Y^{new}}(\alpha)=\text{diag}(1,1, \ldots, 1,0)=\Mat_{B_Y^{new}}^{B_X}(\beta)$$.

We also have the restriction of $\Mat_{B_Y}^{B_X}(\beta)$ to the first $N$ columns and $N$ rows is a basis transformation matrix for $B_{\hat{X}}$, and the block matrix of $\Mat_{B_Y}^{B_X}(\beta)$ with the $1$ by $1$ matrix with $1$ is a a basis transformation matrix for $B_X$. Under this new basis
$$\Mat_{B^{new}_X}^{B_Y}(\alpha)=\text{diag}(1,1, \ldots, 1,0)=\Mat_{B_Y}^{B^{new}_X}(\beta)$$.
%
%
%


\end{proposition}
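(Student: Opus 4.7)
The plan is to reduce to Proposition \ref{prop:boring} on the $N$ matched intervals while showing that the ephemeral summand $\Mod{I}[b,d)$ is essentially isolated: its generator $x_{N+1}$ maps to zero under $\alpha$ (killing column $N+1$ of the matrix), and no $x_{N+1}$-component appears in the image of $\beta$ on the other generators (killing row $N+1$). Once this isolation is in place, the conclusion follows by plugging in the lemmas already proved.

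First I would pin down the matching. By bottleneck stability every interval of $\Mod{Y}$ has length greater than $2\epsilon$, so it must be matched with an interval of $\Mod{X}$ whose endpoints differ by less than $\epsilon$; since $[b,d)$ is the unique interval of $\Mod{X}$ of length less than $2\epsilon$, it is unmatched, and the $N$ intervals of $\hat{\Mod{X}}$ pair bijectively with those of $\Mod{Y}$. Index so that $x_i \leftrightarrow y_i^{old}$ for $i\leq N$ and $x_{N+1}$ is the ephemeral generator. Next I would prove that $\alpha_b(x_{N+1})=0$. If some entry $\Mat_{B_X}^{B_Y}(\alpha)(j,N+1)$ were nonzero then $\birth(y_j^{old})\leq b+\epsilon < \death(y_j^{old}) \leq d+\epsilon$; combining with $|\birth(x_j)-\birth(y_j^{old})|<\epsilon$, $|\death(x_j)-\death(y_j^{old})|<\epsilon$, and $d-b<2\epsilon$ forces $\death(x_j)\in (b, d+2\epsilon)$. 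But the hypothesis that every critical value of $\hat{\Mod{X}}$ is at distance at least $2\epsilon$ from both $b$ and $d$, combined with $d-b<2\epsilon$, ensures that $(b-2\epsilon, b+2\epsilon)\cup (d-2\epsilon,d+2\epsilon) \supseteq (b,d+2\epsilon)$, giving a contradiction. Hence column $N+1$ of $\Mat_{B_X}^{B_Y}(\alpha)$ vanishes. For the first $N$ columns the proof of Proposition \ref{prop:boring} applies verbatim to the matched intervals and shows that $\Mat_{B_X}^{B_Y}(\alpha)$ restricted to the upper $N\times N$ block is a basis transformation matrix for $\hat{B}_Y$. Together with the vanishing column $N+1$ and the convention that row $N+1$ is zero (since $y_{N+1}^{old}=0$), this proves $\Mat_{B_X}^{B_Y}(\alpha)$ is an extended basis transformation matrix, and Lemma \ref{lem:identity} then yields $\Mat_{B_X}^{B_Y^{new}}(\alpha) = \text{diag}(1,\dots,1,0)$.

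To read off $\Mat_{B_Y^{new}}^{B_X}(\beta)$ I would substitute this into Lemma \ref{lem:algebra}. For $i\leq N$, the interleaving identity $\beta(\alpha(x_i))=\phi_{\birth(x_i)}^{\birth(x_i)+2\epsilon}(x_i)$ forces the first $N$ entries of column $i$ of $\Mat_{B_Y^{new}}^{B_X}(\beta)$ to be $\delta_{ki}$. The potentially dangerous entry $\Mat_{B_Y^{new}}^{B_X}(\beta)(N+1,i)$ for $i\leq N$ is ruled out by running the forbidden-zone argument symmetrically: a nonzero entry would require $b \leq \birth(y_i^{new})+\epsilon < d$ and $\death(y_i^{new})\geq d-\epsilon$, which via the matching forces the endpoints of $x_i$ into $(b-2\epsilon,b+2\epsilon)\cup(d-2\epsilon,d+2\epsilon)$, a contradiction. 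Column $N+1$ is trivially zero because $y_{N+1}^{new}=0$. This yields $\Mat_{B_Y^{new}}^{B_X}(\beta) = \text{diag}(1,\dots,1,0)$. The second half of the proposition, where the basis of $\Mod{X}$ is transformed instead, is the same argument with the roles of $\alpha$ and $\beta$ (equivalently, of $\Mod{X}$ and $\Mod{Y}$) swapped; the added $1$ in position $(N+1,N+1)$ of the basis-transformation matrix simply leaves $x_{N+1}$ unchanged, which is consistent because $\beta$ never sends anything into the $x_{N+1}$ direction.

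The main obstacle is the forbidden-zone bookkeeping: one has to keep track of two separate $2\epsilon$-neighbourhoods, around $b$ and around $d$, and verify that under the hypothesis $d-b<2\epsilon$ they together cover every location where a matched critical value could land. Once this isolation is clean, the proof reuses Proposition \ref{prop:boring} together with Lemmas \ref{lem:identity} and \ref{lem:algebra} essentially unchanged; the novelty is purely organizational, consisting in formalizing the extended-basis conventions so that the ephemeral interval becomes a passenger that contributes only a zero block on the diagonal.
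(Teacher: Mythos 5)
Your proposal is correct and follows essentially the same route as the paper: isolate the ephemeral interval $[b,d)$ by showing it contributes only a zero block, then reduce the remaining $N\times N$ block to Proposition~\ref{prop:boring} and read off $\beta$ via Lemmas~\ref{lem:identity} and~\ref{lem:algebra}. The paper phrases the isolation step as ``$[b,d)$ is either contained in or disjoint from every interval of $\Mod{Y}$,'' where you carry out the equivalent forbidden-zone computation directly; these are the same observation. One point where you are in fact more careful than the paper: the paper asserts that $\Mat_{B_Y}^{B_X}(\beta)(N+1,i)=0$ holds ``by definition'' because $y_{N+1}$ is the zero element of the extended basis, but that entry is the $x_{N+1}$-component of $\beta(y_i)$, which is unrelated to $y_{N+1}$ and does require the symmetric forbidden-zone argument that you supply explicitly.
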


\begin{proof}

Our assumption that the distance from the end points of $\hat{I}$ to critical value of $\Mod{A}$ is at least $2\epsilon$, alongside the length of $\hat{I}$ smaller that $2\epsilon$ implies that for every interval in $\Mod{B}$, either $\hat{I}$ is contained in that interval or it is disjoint to it. This implies that $\Mat_{B_X}^{B_Y}(\alpha)(N+1, i)=0= \Mat_{B_X}^{B_Y}(\alpha)(i, N+1)$ for all $i$. As the $N+1$ element in the extended basis $B_Y$ is a zero element we have definition $\Mat_{B_Y}^{B_X}(\beta)(N+1, i)=0=\Mat_{B_Y}^{B_X}(\beta)(i, N+1)$ for all $i$

When we restrict to first $N$ elements of $B_X$ and $B_Y$ then we are in the same case as in Proposition \ref{prop:boring} and the same argument can be applied here to complete the proof.

%
%
%
%
%
%
\end{proof}

\section{Vine and Matrix representations of vineyard modules}

In order to explore this decomposition further we will need find nice ways to represent vineyard modules. For this we will define a vine, basis and matrix representation.

\begin{definition}
Let $\Vine{V}=(\Mod{V}_t, \alpha_s^t, \beta_t^s)$ be a vineyard modules over time interval $[s_0,s_1]$.
A \emph{vine and matrix} representation of $\Vine{V}=(\Mod{V}_t, \alpha_s^t, \beta_t^s)$ consists of 
 a set of vines $\{\gamma_1, \gamma_2, \ldots \gamma_N\}$ each of which is defined over a connected subset of $[s_0,s_1]$, alongside 
 families of matrices $\{M(\alpha)^{s\to t}\mid s_1\leq s<t\leq s_1\}$ and $\{M(\beta)^{t\to s} \mid s_1\leq s<t\leq s_1\}$,
such that there exists family of extended bases $\{B_t\}$ (where $B_t$ is a basis for $\Mod{V}_t$ and respects the order of the vines) such that $M(\alpha)^{s\to t}=\Mat_{B_s}^{B_t}(\alpha^{s\to t})$  and $M(\beta)^{t\to s}=\Mat_{B_t}^{B_s}(\beta^{t\to s})$. 
We call $\{B_t\}$ an associated family of bases for that representation.
\end{definition}

Notably this vine and matrix representation is not unique as it depends on the choice of bases. Given a vine and matrix representation there may also be many potential associated family of bases. However, we do at least know that if the vine and matrix representations agree then the vineyard modules are isomorphic.

\begin{proposition}\label{prop:isom}
Let $\Vine{V}=(\Mod{V}_t, \alpha_V^{s\to t}, \beta_V^{t\to s})$ and $\Mod{W}=(\Mod{W}_t,\alpha_W^{s\to t}, \beta_W^{t\to s})$  be vineyard modules over the same vineyard with vines $\{\gamma_i\}$. Suppose that $(\{\gamma_i\}, \{ M(\alpha)^{s\to t}\},\{M(\beta)^{t\to s}\})$ is a vine and matrix representation of both $\Vine{V}$ and $\Vine{W}$. Then $\Vine{V}$ and $\Vine{W}$ are isomorphic as vineyard modules.
\end{proposition}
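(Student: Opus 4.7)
The plan is to build an explicit isomorphism $\phi=\{\phi_t\}$ between the two vineyard modules out of the bases that witness the shared vine and matrix representation. By hypothesis we have associated families of extended bases $\{B^V_t\}$ for $\Vine{V}$ and $\{B^W_t\}$ for $\Vine{W}$ which both realise the same matrices $M(\alpha)^{s\to t}$, $M(\beta)^{t\to s}$ and respect the same ordering given by the vines $\{\gamma_i\}$. In particular, for every $t$ and every index $i$, the basis elements $x^t_i\in B^V_t$ and $y^t_i\in B^W_t$ are associated to the same vine $\gamma_i$, so either they are both zero elements of the extended basis, or they are both nonzero with $\birth(x^t_i)=\birth(y^t_i)=\birth(\gamma_i(t))$ and $\death(x^t_i)=\death(y^t_i)=\death(\gamma_i(t))$.

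First I would define $\phi_t:\Mod{V}_t\to\Mod{W}_t$ at each time $t$. The natural map sends the basis element $x^t_i$ to $y^t_i$ for each nonzero index. Since nonzero basis elements of a persistence module generate interval summands with matching birth and death times, sending $x^t_i\mapsto y^t_i$ extends uniquely to a morphism of persistence modules $\phi_t$, and because the bases match index-for-index (and zero-for-zero), $\phi_t$ is an isomorphism of persistence modules in each time slice. The routine check is that $\phi_t$ commutes with the internal transition maps $\phi^{V,t}_{r\to r'}$ and $\phi^{W,t}_{r\to r'}$; this is immediate from the definition of a basis, because $\phi^{V,t}_{\birth(x^t_i)\to r}(x^t_i)$ and $\phi^{W,t}_{\birth(y^t_i)\to r}(y^t_i)$ are the designated basis vectors of the $i$-th summand in their respective persistence modules at height $r$.

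Next I would check that the family $\{\phi_t\}$ commutes with the interleaving maps, i.e.\ $\phi_t\circ\alpha_V^{s\to t}=\alpha_W^{s\to t}\circ\phi_s$ and analogously for $\beta$. This is where the matrix equality does the work. Evaluating both sides on a basis element $x^s_i\in B^V_s$, the definition of $\Mat_{B^V_s}^{B^V_t}(\alpha_V^{s\to t})=M(\alpha)^{s\to t}$ gives
\[
\alpha_V^{s\to t}(x^s_i)=\sum_j M(\alpha)^{s\to t}(j,i)\,\psi^{V,t}_{\birth(x^t_j)\to\birth(x^s_i)+|s-t|}(x^t_j),
\]
and applying $\phi_t$ replaces each $x^t_j$ with $y^t_j$. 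On the other side, $\phi_s(x^s_i)=y^s_i$ and the analogous expansion of $\alpha_W^{s\to t}(y^s_i)$ uses exactly the same scalars $M(\alpha)^{s\to t}(j,i)$. The two expressions then agree term-by-term, and similarly for $\beta$. Commutativity with internal transition maps, checked in the previous step, lets this identity on basis elements spread across the whole persistence module $\Mod{V}_s$.

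The only potential obstacle is ensuring the argument handles extended-basis entries cleanly; zero entries in the extended basis contribute zero columns and rows to each matrix in the representation, and $\phi_t$ sends the corresponding zero element of $\Mod{V}_t$ to the corresponding zero element of $\Mod{W}_t$, so they cause no trouble. Since each $\phi_t$ is an isomorphism of persistence modules and the family commutes with all interleaving maps, $\phi=\{\phi_t\}$ is an isomorphism of vineyard modules, completing the proof.
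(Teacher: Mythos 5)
Your construction is exactly the paper's: define $\phi_t$ by matching basis elements index-for-index (i.e.\ with matrix $\pi_t$, the projection onto the vines supported at $t$) and verify it commutes with transitions and interleavings via the shared matrices. You simply spell out the verifications that the paper labels ``trivially'' and ``clearly,'' so the argument is correct and essentially the same.
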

\begin{proof}
There must exist bases $B_t^V$ of $\Mod{V}_t$ and $B_t^W$ of $\Mod{W}_t$ such that 
$$\Mat_{B^V_s}^{B^V_t}(\alpha_V^{s\to t})=M(\alpha)^{s\to t}=\Mat_{B^W_s}^{B^W_t}(\alpha_W^{s\to t}) \quad \text{ and } \quad \Mat_{B^V_t}^{B^V_s}(\beta_V^{t\to s})=M(\beta_W)^{t\to s}=\Mat_{B_t}^{B_s}(\beta^{t\to s})$$ for all $s<t$.

Set $\rho_t: \Mod{V}_t \to \Mod{W}_t$ by $\Mat_{B_t^V}^{B_t^W}(\rho)=\pi_t$ where $\pi_t$ is the diagonal matrix with $1$ at the $(i,i)$ entry with $t\in \supp(\gamma_i)$ and $0$ otherwise. Observe that trivially we have that $\rho_t$ commutes appropriately with all the interleaving and transition maps and determines a morphism $\rho: \Vine{V} \to \Vine{W}$. This vineyard module morphism $\rho$ is also clearly invertible with a symmetric construction of the inverse.
\end{proof}

Given the vine and matrix representations of a finite number of vineyard modules there is an obvious construction of a vine and matrix representation of the vineyard module of their direct sum via block matrices. Being able to write the matrices as block matrices description provides an easy sufficient condition for when a vineyard module decomposes. 

\begin{lemma}
Let $\Vine{X}=(\Mod{X}_t, \alpha_s^t, \beta_t^s)$ be a vineyard module with vine and matrix representation  \\
$(\{\gamma_i\}, \{M(\alpha^{s\to t})\}, \{M(\beta^{t\to s})\})$. 
Suppose that for all $s_0\leq s<t \leq s_1$ both $M(\alpha^{s\to t})$ and $M(\beta^{t\to s})$ satisfy block diagonal matrix with block index sets $S_1, S_2, \ldots S_m$. Then we can construct vineyard modules $\Vine{X}_1, \Vine{X}_2, \ldots, \Vine{X}_m$ with $\Vine{X}\cong \oplus_{j=1}^m\Vine{X}_j$ where $\Vine{X}_j$ is has vine and matrix representations  $$(\{\gamma_i\mid i\in S_j\}, \{\pi_{S_j}(M(\alpha^{s\to t}))\}, \{ \pi_{S_j}( M(\beta^{t\to s}))\}).$$ Here $\pi_{S_j}(A)$ is the restriction of matrix $A$ to the coordinates in $S_j$. 
\end{lemma}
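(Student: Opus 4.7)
The plan is to produce the summands $\Vine{X}_j$ explicitly by restricting the data of $\Vine{X}$ to the basis elements indexed by each block, and then appeal to Proposition \ref{prop:isom} to obtain the isomorphism. I would first fix an associated family of extended bases $\{B_t\}_{t \in [s_0,s_1]}$ for the given vine and matrix representation, writing $B_t = \{x_1^t, \ldots, x_N^t\}$ where $x_i^t$ generates the interval summand $\Mod{I}[\birth(\gamma_i(t)),\death(\gamma_i(t)))$ whenever $t \in \supp(\gamma_i)$ and is the zero element otherwise. Because the interval decomposition $\Mod{X}_t = \bigoplus_i \Mod{I}[\birth(\gamma_i(t)),\death(\gamma_i(t)))$ respects the chosen basis, the transition maps $\phi_s^t$ of $\Mod{X}_t$ act diagonally on this decomposition.

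Next, for each $j \in \{1,\ldots,m\}$ and each $t$, I would define $\Mod{X}_{j,t}$ to be the sub-persistence-module of $\Mod{X}_t$ spanned by $\{x_i^t : i \in S_j\}$, with transition maps inherited from $\Mod{X}_t$; this is well-defined by the diagonal action just noted. The block diagonal hypothesis on $M(\alpha^{s\to t})$ says that $\alpha^{s\to t}$ sends the subspace spanned by $\{x_i^s : i \in S_j\}$ into the subspace spanned by $\{x_i^t : i \in S_j\}$, so $\alpha^{s\to t}$ restricts to an $|s-t|$-morphism $\alpha_j^{s\to t}:\Mod{X}_{j,s}\to\Mod{X}_{j,t}$. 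The symmetric statement gives $\beta_j^{t\to s}$. All interleaving identities and commutativities with the transition maps needed for $\Vine{X}_j := (\Mod{X}_{j,t}, \alpha_j^{s\to t}, \beta_j^{t\to s})$ to be a vineyard module are inherited by restriction from the corresponding identities for $\Vine{X}$. By construction $\{x_i^t : i \in S_j\}$ is an associated family of bases exhibiting $(\{\gamma_i : i \in S_j\}, \{\pi_{S_j}(M(\alpha^{s\to t}))\}, \{\pi_{S_j}(M(\beta^{t\to s}))\})$ as a vine and matrix representation of $\Vine{X}_j$.

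Finally, I would assemble $\bigoplus_{j=1}^m \Vine{X}_j$. At each time $t$ its underlying persistence module is $\bigoplus_j \Mod{X}_{j,t}$, which is canonically identified with $\Mod{X}_t$ via $B_t$, and the concatenation of the bases $\{x_i^t : i \in S_j\}$ across $j$ recovers $B_t$. With respect to this family of bases, the matrices of the interleaving maps of $\bigoplus_j \Vine{X}_j$ reassemble from the blocks $\pi_{S_j}(M(\alpha^{s\to t}))$ and $\pi_{S_j}(M(\beta^{t\to s}))$ into exactly the original block diagonal matrices $M(\alpha^{s\to t})$ and $M(\beta^{t\to s})$. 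Hence $\bigoplus_j \Vine{X}_j$ shares the vine and matrix representation $(\{\gamma_i\}, \{M(\alpha^{s\to t})\}, \{M(\beta^{t\to s})\})$ with $\Vine{X}$, and Proposition \ref{prop:isom} gives $\Vine{X} \cong \bigoplus_{j=1}^m \Vine{X}_j$. The main obstacle is nothing deep but rather the bookkeeping in the middle step, namely carefully checking that the block diagonal form is what licenses the restriction of every interleaving and transition map and that all the commutativity squares restrict to the blocks without loss.
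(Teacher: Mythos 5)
The paper states this lemma without proof (it follows Proposition~\ref{prop:isom} and the text moves directly on to the harder converse question), so there is no paper argument to compare against. Your proof is correct and is the natural one: the associated family of extended bases yields, for each $t$, an internal direct-sum decomposition of $\Mod{X}_t$ indexed by the blocks; the block-diagonal hypothesis is exactly what makes each $\alpha^{s\to t}$ and $\beta^{t\to s}$ restrict to the span of the block's basis elements (and hence restricts all $\alpha_r$, $\beta_r$ at every height, via commutativity with the transition maps); the vineyard-module axioms for each $\Vine{X}_j$ are inherited by restriction; and $\bigoplus_j \Vine{X}_j$ is then seen to share the given vine and matrix representation with $\Vine{X}$, so Proposition~\ref{prop:isom} finishes the argument. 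One could shortcut the last step by observing that the internal decomposition of each $\Mod{X}_t$ already realises $\Vine{X}$ as an internal direct sum of the $\Vine{X}_j$, but invoking Proposition~\ref{prop:isom} is cleaner and consistent with the paper's framework.
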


Finding necessary conditions for decomposition in terms of the vine and matrix representation is much harder. Depending on the choice of associated bases, we can not expect that a vineyard module which is the direct sum of vine modules will necessarily have matrices that will split up into a block diagonal form. We will need to find ways to transform the bases of the persistence modules over the different $t$ so that the matrices are of a nice form. 

We now wish to use these basis transformations to simplify the matrices of the interleaving maps within a vineyard module. The plan is to fix the basis at $t_0$ and then transforms the bases in a forward or backward direction. This is complicated by the vines within a vineyard module having different supports.
Let $\pi_{S}$ denote the projection matrix onto the coordinates in set $S$. That is, the diagonal matrix with $1$ for each index in set $S$ and $0$ otherwise.

Given the structure of the vineyard modules we only need to proscribe how to change the basis over smaller segments. To construct these segments we need to consider the locations where birth and or death values coincide, or a new interval appears/disappears (philosophically its own birth and death values coincide). We define these time values as \emph{critical}.

\begin{definition}
A vineyard module \emph{segment} is the restriction of a vineyard module to a time interval $[T_0, T_1]$ such that there are no critical times in $(T_0, T_1)$ and one of the three conditions hold:
\begin{itemize}
\item neither $T_0$ nor $T_1$ are critical times and the $|T_0-T_1|$ is bounded above by $\epsilon/4$ where $\epsilon$ is the smallest distance between the distinct birth and death values within $T_0$ or within $T_1$, 
\item one of $T_0$ or $T_1$ is critical (label this $T_i$) and $|T_0-T_1|$ is bounded above by $\epsilon/4$ where $\epsilon$ is the smallest distance between the distinct birth and death values in $T_i$.
\end{itemize}
\end{definition}

Note that our simplifying assumptions guarantee that the number of times that the endpoints of the vines $\{\gamma_i\}$ are not all distinct is finite. In the case where an interval appears or disappears we consider the limiting value as one of the distinct birth/death values. The partition of a vineyard module into segments in dependent only on the set of vines and not on the interleaving maps.

We wish to simplify the matrix representative of the transition maps within a vineyard module by progressively changing the basis of the persistence modules going forward or going backwards. There will be time values where we cannot guarantee that these simplifications result in diagonal matrices. This leads to the definition of forwards and backwards incompatibility.

\begin{definition}
Let $\Vine{V}$ be a vineyard module. We say that  $\Vine{V}$ is \emph{forwards incompatible} at $s$  by vines $(\gamma_k, \gamma_l)$ if $\gamma_l(s)\leq \gamma_k(s)$ but $\gamma_l(t)\nleq \gamma_k(t)$ for all $t\in (s, s+\delta)$ for $\delta>0$ sufficiently small. We say that $s$ is \emph{forwards compatible} is it is not forwards incompatible. 
\end{definition}

The forwards incompatible cases are shown in Table \ref{table:casesincompatible} with $\Mod{X}=\Mod{V}_t$ and $\Mod{Y}=\Mod{V}_s$ for $t>s$ sufficiently close, for $s$ forwards incompatible. The definition of backwards compatible and backwards incompatible is completely symmetric - traversing the vineyard in the opposite direction.

\begin{definition}
We say $\Vine{V}$ is \emph{backwards incompatible} at $t$ by vines $(\gamma_k, \gamma_l)$  if $\gamma_l(t)\leq \gamma_k(t)$ but $\gamma_l(s)\nleq \gamma_k(s)$ for all $s\in (t-\delta, t)$ for $\delta>0$ sufficiently small. We say that $t$ is \emph{backwards compatible} is it is not backwards incompatible.
\end{definition}


Note that for a segment $[T_m ,T_{m+1}]$ the only potentially forwards incompatible value is $T_m$.
 
 \begin{definition}
Let $\Vine{V}=(\Mod{V}_t, \{\alpha^{s\to t}\}, \{\beta^{t\to s}\})$ be a vineyard module segment over $[T_m, T_{m+1}]$. And $\{B_t^{old}\}$ an initial choice of basis for each $\Mod{V}_t$. 
Let $A:=\Mat_{B_{T_m}^{new}}^{B_{T_{m+1}}^{old}}(\alpha^{T_m \to T_{m+1}})$ and $\tilde{A}$ the matrix $A$ with $1$ added to any non-zero diagonal element.
We say $\{B^{new}_t\}$ is a \emph{forwards simplified} family of bases if
\begin{itemize}
\item $B^{new}_{T_{m}}=B^{old}_{T_m}$,
\item $B^{new}_{T_{m+1}}=\tilde{A}(B_{T_{m+1}}^{old})$ and
\item $\Mat_{B^{new}_s}^{B^{new}_t}(\alpha^{s\to t})=\pi_{S_s\cap S_t}$ for all $t>s$ sufficiently close, 
\end{itemize}
when $T_m$ is forwards compatible, and if $T_m$ forwards incompatible by vines $(\gamma_k, \gamma_l)$ and $\lambda=A(l,k)/A(l,l)$ then
\begin{itemize}
\item $B^{new}_{T_m}=B^{old}_{T_m}$
\item $B^{new}_{T_{m+1}}=(A e^{-\lambda}_{lk})(B_{T_{m+1}}^{old})$
\item $\Mat_{B^{new}_s}^{B^{new}_t}(\alpha^{s\to t})=\pi_{S_s\cap S_t}$ for all $T_m<s<t$ with $s,t$ sufficiently close, and
\item $\Mat_{B^{new}_s}^{B^{new}_t}(\alpha^{T_m\to t})=e^{\lambda}_{lk} \pi_{S_{T_m}}$ for all $t>T_m$  sufficiently close.
\end{itemize}
\end{definition}

The definition of backwards simplified is symmetric. Note that for a segment $[T_m ,T_{m+1}]$ the only potentially backwards incompatible value is $T_{m+1}$.

 \begin{definition}
Let $\Vine{V}=(\Mod{V}_t, \{\alpha^{s\to t}\}, \{\beta^{t\to s}\})$ be a vineyard module segment over $[T_m, T_{m+1}]$. And $\{B_t^{old}\}$ an initial choice of basis for each $\Mod{V}_t$. Let $A=\Mat_{B_{T_{m+1}}^{new}}^{B_{T_{m}}^{old}}(\beta^{T_{m+1} \to T_{m}})$ and $\tilde{A}$ the matrix $A$ with $1$ added to any non-zero diagonal element.
We say $\{B^{new}_t\}$ is a \emph{backwards simplified} family of bases if
\begin{itemize}
\item $B^{new}_{T_{m+1}}=B^{old}_{T_{m+1}}$,
\item $B^{new}_{T_{m}}=\tilde{A}(B_{T_{m}}^{old})$ and
\item $\Mat_{B^{new}_t}^{B^{new}_s}(\beta^{t\to s})=\pi_{S_s\cap S_t}$ for all $t>s$ sufficiently close, 
\end{itemize}
when $T_{m+1}$ is backwards compatible, and if $T_{m+1}$ backwards incompatible by vines $(\gamma_k, \gamma_l)$ and  \\
$\lambda=A(l,k)/A(l,l)$ 
then
\begin{itemize}
\item $B^{new}_{T_{m+1}}=B^{old}_{T_{m+1}}$,
\item $B^{new}_{T_{m}}=(A e^{-\lambda}_{lk})(B_{T_{m}}^{old})$
\item $\Mat_{B^{new}_t}^{B^{new}_s}(\beta^{t\to s})=\pi_{S_s\cap S_t}$ for all $s<t<T_{m+1}$ with $s,t$ sufficiently close, and
\item $\Mat_{B^{new}_{T_{m+1}}}^{B^{new}_t}(\beta^{T_{m+1}\to t})=e^{\lambda}_{lk} \pi_{S_{T_{m+1}}}$ for all $t<T_{m+1}$ sufficiently close.
\end{itemize}
\end{definition}
%
%
%


Given a vineyard module we can partition it into segments $\{[T_m, T_{m+1}]\}_{m=1}^{M-1}$. We wish to forward simplify progressively over the segments from $[T_0, T_1]$ through to $[T_{M-1}, T_M]$. We then can backwards simplify back again starting with $[T_{M-1}, T_M]$ and progressively back to $[T_0, T_1]$. The final family of bases will be call \emph{forwards and then backwards simplified}. Given the symmetry in the definitions of forward and backward simplification it will be sufficient to show it is always possible to forward simplify a segment.

\begin{proposition}\label{prop:cansimplify}
Let $\Vine{V}=(\{\Mod{V}_t\}, \{\alpha^{s\to t}\}, \{\beta^{t\to s}\})$ be a vineyard module segment over $[T_m, T_{m+1}]$. Then we can forward simplify $\Vine{V}$.
\end{proposition}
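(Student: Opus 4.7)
The plan is to split into two cases depending on whether $T_m$ is forwards compatible or forwards incompatible, and in each case to build $\{B^{new}_t\}$ by fixing $B^{new}_{T_m}=B^{old}_{T_m}$, prescribing $B^{new}_{T_{m+1}}$ via the single interleaving $\alpha^{T_m\to T_{m+1}}$, and then propagating to intermediate times. First I would record the key size estimate: the length bound $|T_{m+1}-T_m|\le\epsilon/4$ built into the definition of a segment guarantees that for every pair $s<t$ in $[T_m,T_{m+1}]$ all pairwise differences of distinct critical values in $\Mod{V}_s$ and in $\Mod{V}_t$ exceed $2|t-s|$, so the hypotheses of Propositions \ref{prop:boring}, \ref{prop:simplifies}, \ref{prop:inclusion} and \ref{prop:newinterval} are met. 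For interior pairs $T_m<s<t<T_{m+1}$ in particular, all critical values are distinct with a common ordering, so Proposition \ref{prop:boring} applies directly.

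In the forwards compatible case, the genericity assumptions restrict the comparison of $\Mod{V}_{T_m}$ with $\Mod{V}_{T_{m+1}}$ to one of cases 1--10 of Table \ref{table:cases}, possibly combined with a vine appearing or disappearing as in Proposition \ref{prop:newinterval}. I would apply Proposition \ref{prop:simplifies} (together with Proposition \ref{prop:newinterval} when the interval count changes) to $\alpha^{T_m\to T_{m+1}}$ to conclude that $A:=\Mat_{B^{old}_{T_m}}^{B^{old}_{T_{m+1}}}(\alpha^{T_m\to T_{m+1}})$, after $\tilde A$ fills in $1$s on the diagonal entries corresponding to vines alive on only one side, is an extended basis transformation matrix for $\Mod{V}_{T_{m+1}}$. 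Setting $B^{new}_{T_{m+1}}=\tilde A(B^{old}_{T_{m+1}})$ then gives $\Mat_{B^{new}_{T_m}}^{B^{new}_{T_{m+1}}}(\alpha^{T_m\to T_{m+1}})=\pi_{S_{T_m}\cap S_{T_{m+1}}}$.

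In the forwards incompatible case with vines $(\gamma_k,\gamma_l)$, the pair $(\Mod{V}_{T_m},\Mod{V}_{T_{m+1}})$ sits in the setting of Proposition \ref{prop:inclusion} with $\Mod{X}=\Mod{V}_{T_m}$ and $\Mod{Y}=\Mod{V}_{T_{m+1}}$. Taking $\lambda=A(l,k)/A(l,l)$ and $B^{new}_{T_{m+1}}=(Ae^{-\lambda}_{lk})(B^{old}_{T_{m+1}})$ yields $\Mat_{B^{new}_{T_m}}^{B^{new}_{T_{m+1}}}(\alpha^{T_m\to T_{m+1}})=e^{\lambda}_{lk}$. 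In both cases, I would then define the intermediate bases $B^{new}_t$ for $t\in(T_m,T_{m+1})$ by running the same construction on $\alpha^{T_m\to t}$ in place of $\alpha^{T_m\to T_{m+1}}$; by construction this makes $\Mat_{B^{new}_{T_m}}^{B^{new}_t}(\alpha^{T_m\to t})$ take the required form, and the interior condition $\Mat_{B^{new}_s}^{B^{new}_t}(\alpha^{s\to t})=\pi_{S_s\cap S_t}$ for $T_m<s<t$ then follows by applying Proposition \ref{prop:boring} to $(\alpha^{s\to t},\beta^{t\to s})$.

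The hard part will be verifying internal consistency across the family $\{B^{new}_t\}_{t\in(T_m,T_{m+1}]}$: in the incompatible case the scalar $\lambda$ extracted from $\alpha^{T_m\to t}$ must be independent of $t$, and more generally the basis produced at $t$ by simplifying $\alpha^{T_m\to t}$ must agree with the one that is also seen to simplify $\alpha^{s\to t}$ for $T_m<s<t$. Both facts will follow from the commutation relation $\alpha^{T_m\to t}=\alpha^{s\to t}\circ\alpha^{T_m\to s}$ in the vineyard module combined with the already established form of $\alpha^{s\to t}$ in the interior: once $\alpha^{s\to t}$ becomes $\pi_{S_s\cap S_t}$, the relevant $(l,l)$ and $(l,k)$ entries of $\alpha^{T_m\to t}$ are forced to equal those of $\alpha^{T_m\to s}$, giving a single well-defined $\lambda$ and a well-defined propagated basis throughout the segment.
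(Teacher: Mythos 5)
Your proposal identifies the right ingredients---you correctly reduce the endpoint basis to the single interleaving $\alpha^{T_m\to T_{m+1}}$, invoke Propositions \ref{prop:boring}, \ref{prop:simplifies}, \ref{prop:inclusion}, and \ref{prop:newinterval} in the appropriate configurations, and anticipate the internal-consistency issue across the family $\{B^{new}_t\}$. But the key size estimate at the start of your argument is false, and it conceals the main technical work the proof must do. Suppose $T_{m+1}$ is a critical time. The distance between the two coinciding critical heights is zero at $T_{m+1}$ and, being $2$-Lipschitz in $t$ (the vineyard is $1$-Lipschitz), is at most $2|T_{m+1}-t|$ at nearby $t$. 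For the interleaving $\alpha^{T_m\to t},\beta^{t\to T_m}$, whose constant is $|t-T_m|$, Proposition \ref{prop:boring} requires this gap to exceed $2|t-T_m|$; that fails as soon as $t$ passes the midpoint of the segment. The segment-length bound $|T_{m+1}-T_m|\le\epsilon/4$ only controls the gaps between heights that \emph{remain} distinct at the critical endpoint, not the one that collapses there, so it cannot rescue the argument. Consequently your construction of $B^{new}_t$ by simplifying $\alpha^{T_m\to t}$ directly breaks down for $t$ in the half of the segment nearest the critical endpoint, and the interior claim $\Mat_{B^{new}_s}^{B^{new}_t}(\alpha^{s\to t})=\pi_{S_s\cap S_t}$ cannot be obtained from a single application of Proposition \ref{prop:boring} when $s,t$ are both near that endpoint but not near each other.

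The fix the paper uses, which your proposal is missing, is an adaptive telescoping scheme. Let $f(t)$ be the smallest distance between critical heights in $\Mod{V}_t$; define a sequence by $s_0=T_m$ (or $s_0=T_{m+1}$, when it is $T_m$ that is critical) and $s_n=s_{n-1}\pm f(s_{n-1})/4$, and prove via the $2$-Lipschitz property of $f$ that $\{s_n\}$ converges to the critical endpoint (this convergence is itself a small argument by contradiction). Proposition \ref{prop:boring} is then applied only between consecutive $s_n$, where the step is small enough relative to the local gap for its hypotheses to hold, and the interleaving-map commutation relations are used to splice these local identifications into the statement for all ``sufficiently close'' $s<t$---with that phrase interpreted adaptively as lying in the same or adjacent subinterval $[s_n,s_{n+1}]$, not uniformly across $[T_m,T_{m+1}]$. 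A similar diagram chase is needed to establish $\Mat_{B^{new}_s}^{B^{new}_{T_{m+1}}}(\alpha^{s\to T_{m+1}})=\pi_{S_{T_{m+1}}}$ (or $\pi_{S_{T_{m+1}}}e^{\lambda}_{lk}$ in the incompatible case) by induction along the sequence. This iterated construction is the substance of the proof and must be supplied before the commutativity argument you sketch in your last paragraph has anything to act on.
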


\begin{proof}
We can split the proof into the different cases depending on whether $T_m$ is critical and forward compatible, $T_m$ is critical and forwards incompatible, $T_{m+1}$ is critical, or neither $T_m$ nor $T_{m+1}$ is critical. We will omit the vineyard parameter from the transition maps within the persistence modules (denoting all by $\phi$) as we already have an overwhelming abundance of indices and which persistence module the transition module is within can always be inferred from context using the location of the input. 

Denote by $\{B_t^{old}\}$ the choice of basis for each $\Mod{V}_t$ before forwards simplifying. Let $A=\Mat_{B_{T_m}^{new}}^{B_{T_{m+1}}^{old}}(\alpha^{T_m \to T_{m+1}})$ and let $\tilde{A}$ be the matrix $A$ with $1$ added to any non-zero diagonal element.

\subsubsection*{Case where neither $T_m$ nor $T_{m+1}$ is critical:}

If neither $T_m$ nor $T_{m+1}$ are critical then for all $t\in [T_m, T_{m+1}]$ the critical values are all distinct. 
Observe that $S_t$ is the same for all $t\in [T_m, T_{m+1}]$.
Set $B^{new}_{T_m}=B^{old}_{T_m}$. 


Both $\Mod{V}_{T_m}$ and $\Mod{V}_{t}$
are persistence modules whose critical values are distinct and the difference between pairs of critical values within a persistence module is greater than $4|T_m-t|$. Furthermore, $\alpha^{T_m \to t}$ and $\beta^{t\to T_m}$ form an $|t-T_m|$ interleaving. This means that we can apply Proposition \ref{prop:boring} to say that if we can set $B_{t}^{new}$ to be $\Mat_{B_{T_m}^{new}}^{B^{old}_{t}}(\alpha^{T_m\to t})B_{t}^{old}$ as the new basis for $\Mod{V}_{t}$ then 
both $\Mat_{B_{T_m}^{new}}^{B_{t}^{new}}(\alpha^{T_m\to t})$ and $\Mat_{B_{t}^{new}}^{B_{T_m}^{new}}(\beta^{t\to T_m})$ are the identity when restricted to the vines  in $S_{T_m}$.

In particular for $t=T_m$ we have $B^{new}_{T_{m+1}}=A(B_{T_{m+1}}^{old})$. Since the support of the vines is the same throughout the segment $A(B_{T_{m+1}}^{old})=\tilde{A}(B_{T_{m+1}}^{old})$.

It remains to show that for $s< t$ that $\Mat_{B_{s}^{new}}^{B_{t}^{new}}(\alpha^{s\to t})=\pi_{T_m}=\Mat_{B_{t}^{new}}^{B_{s}^{new}}(\beta^{t\to s})$. 
Denote the basis elements in $B_t^{new}$ by $\{x_i^t\}$. It is sufficient to show that $\alpha^{s\to t}_{\birth(x_i^s)}(x_i^s)=\phi_{\birth(x_i^t)}^{\birth(x_i^s)+|s-t|}(x_i^t)$.

Let $s,t\in(T_m, T_{m+1}]$ with $s<t$. Let $x_i^s$ be a non-zero basis element in $B_s^{new}$. Diagram chasing we can show that

\begin{align*}\label{eq:diagramchase}
\phi_{\birth(x_i^s)+|s-t|}^{\birth(x_i^s)+|s-T_m|+|t-T_m|}(\alpha^{s\to t}_{\birth(x_i^s)}(x_i^s))
&=\alpha^{T_m \to t}_{\birth(x_i^s)+|s-T_m|}(\beta^{s\to s_n}_{\birth(x_i^s)}(x_i^s))\\ 
&=\alpha^{T_m \to t}_{\birth(x_i^s)+|s-T_m|}(\phi_{\birth(x_i^{s_n})}^{\birth(x_i^s)+|s-s_i|}(x_i^{s_n}))\\
&=\phi_{\birth(x_i^{T_m})+|t-s_n|}^{\birth(x_i^s)+|s-T_m|+|t-T_m|}(\alpha_{\birth(x_i^{T_m})}^{T_m\to t}(x_i^{T_m}))\\
&=\phi_{\birth(x_i^{T_m})+|t-T_m|}^{\birth(x_i^s)+|s-T_m|+|t-T_m|}(\phi_{\birth(x_i^t)}^{\birth(x_i^{T_m})+|t-T_m|}(x_i^t))\\
&=\phi_{\birth(x_i^s)+|s-t|}^{\birth(x_i^s)+|s-T_m|+|t-T_m|}(\phi_{\birth(x_i^t)}^{\birth(x_i^s)+|s-t|}(x_i^t))
\end{align*}

By assumption there are no critical heights of $\Mod{V}_t$ within the interval $$[\birth(x_i^s)+|s-t|,\birth(x_i^s)+|s-T_m|+|t-T_m|]\subset (\birth(x_i^t), \birth(x_i^t) + \delta)$$ and so we can infer that $\alpha^{s\to t}_{\birth(x_i^s)}(x_i^s)=\phi_{\birth(x_i^t)}^{\birth(x_i^s)+|s-t|}(x_i^t)$. Since this holds for all $i\in S_{T_m}$ we conclude that $\Mat_{B_{s}^{new}}^{B_{t}^{new}}(\alpha^{s\to t})=\pi_{S_{T_m}}$. 

\subsubsection*{Case where $T_{m+1}$ is critical:} 

Observe that $S_t$ is the same for all $t\in [T_m, T_{m+1})$ and that $S_{T_{m+1}}\cap S_t=S_{T_{m+1}}$ for all $t\in [T_m, T_{m+1}]$. This is because the only potential change in support can be from the disappearance of a vine at time $T_{m+1}$.

Set $B_{T_m}^{new}=B_{T_m}^{old}$. 
By Proposition \ref{prop:newinterval} (if an interval disappears at $T_{m+1}$) or Proposition \ref{prop:simplifies} (otherwise) we can set $B_{T_{m+1}}^{new}$ to be $A(B_{T_{m+1}}^{old})$ (noting this is the same as $\tilde{A}(B_{T_{m+1}}^{old})$), and that under this new basis 
$$\Mat_{B_{T_m}^{new}}^{B_{T_{m+1}}^{new}}(\alpha^{T_m\to T_{m+1}})=\pi_{S_{T_m}\cap S_{T_{m+1}}}=\Mat_{B_{T_{m+1}}^{new}}^{B_{T_m}^{new}}(\beta^{T_{m+1}\to T_M}).$$ 

Define the function $f:[T_m, T_{m+1}]\to [0, \infty)$ by $f(t)$ as the smallest distance between any pair of critical values in $\Mod{V}_t$. Note that $f$ is $2$-Lipschitz as the vineyard is assumed to be $1$-Lipschitz, $f(T_{m+1})=0$ and $f(t)>0$ for $t\neq T_{m+1}$. In particular this implies $0<f(t)<2|T_{m+1}-t|$ and for any $t\in [T_m, T_{m+1})$ we have $t+f(t)/4\in [T_m, T_{m+1})$.

Construct the strictly increasing sequence $\{s_n\}\subset [T_m, T_{m+1})$ with $s_0:=T_m$ and $s_{n}:=s_{n-1}+ f(s_{n-1})/4$. As $\{s_n\}$ is a bounded increasing sequence it must converge to some limit which we will denote $L\in[T_m , T_{m+1}]$. Suppose that $L<T_{m+1}$ which by assumption implies $f(L)>0$. Choose $k$ such that $s_k>L-f(L)/4$. Since $|f(s_k)-f(L)|<2|L-s_k|$ we have $f(s_k)>f(L)/2$. 


This implies that
 $$s_{k+1}=s_k+f(s_k)/4> L-f(L)/16+f(L)/8=L+f(L)/16>L$$ which is a contradiction as $\{s_n\}$ is increasing. We conclude that $\lim_{n\to \infty} s_n= T_{m+1}$. Thus every $s\in [T_m, T_{m+1})$ will satisfy $s\in [s_n, s_{s+1})$ for some $n$. We will consider $s<t$ to be sufficiently close if they lie in the same or adjacent subintervals.

We can define $B^{new}_{t}$ for $t\in (s_n , s_{n+1}]$ inductively over $n$, using the same arguments in to case where neither $T_m$ nor $T_{m-1}$ are critical as we can note that $[s_n, s_{n+1}]$ is satisfies the definition of segment by construction. This implies that by the previous case that $\Mat_{B_{s}^{new}}^{B_{t}^{new}}(\alpha^{s\to t})=\pi_{T_m}=\Mat_{B_{t}^{new}}^{B_{s}^{new}}(\beta^{t\to s})$ for $s<t$ and both in $[s_n, s_{n+1}]$. 

Now suppose that $s\in(s_{n-1}, s_n]$ and $t\in(s_n, s_{n+1}]$. We have already shown $\alpha^{s\to s_n}_{\birth(x_i^s)}(x_i^s)=\phi_{\birth(x_i^{s_n})}^{\birth(x_i^s)+|s-s_n|}(x_i^{s_n})$ and $\alpha^{s_n\to t}_{\birth(x_i^{s_n})}(x_i^{s_n})=\phi_{\birth(x_i^{t})}^{\birth(x_i^{s_n})+|t-s_n|}(x_i^{t})$.
 As the interleaving maps commute we combine to say
$$\alpha^{s\to t}_{\birth(x_i^s)}(x_i^s)=\alpha^{s_n\to t}_{\birth(x_i^s)+|s_n-s|}(\alpha^{s\to s_n}_{\birth(x_i^s)}(x_i^s))= \phi_{\birth(x_i^{t})}^{\birth(x_i^s)+|s-t|}(x_i^{t}).$$
%
Note that by construction of our sequence $\{s_n\}$ we have $\death(x_i^t)>\birth(x_i^s)+|s-t|$. 
As this holds for all vines $\gamma_i$ with $i\in S_t$ we conclude $\Mat_{B_{s_n}^{new}}^{B_{t}^{new}}(\alpha^{s_n\to t})=\pi_{S_t}$ for $s<t<T_{m+1}$ sufficiently close.


We want to show that $\Mat_{B_s^{new}}^{B_{T_{m+1}}^{new}}(\alpha^{s\to T_{m+1}})=\pi_{S_{T_m}}$ for all $s\in [T_m, T_{m+1}].$ We prove this inductively over $n$ for $s\in (s_{n-1}, s_n]$ with the base case of $n=0$ the singleton $\{T_m\}$ true by construction. Let $\gamma_i\in S_{T_{m+1}}$ and thus $\death(\gamma_i^t)-\birth(\gamma_i^t)>|T_m-T_{m+1}$ for all $t\in [T_m, T_{m+1}]$. 

As the interleaving maps commute we know 
$$\alpha_{\birth(x_i^{s_n})}^{s_n \to T_{m+1}}(x_i^{s_n})=\alpha_{\birth(x_i^{s_n})+|s-s_n|}^{s\to T_{m+1}}(\alpha_{\birth(x_i^{s_n})}^{s_n \to s}(x_i^{s_n}))$$
and thus
\begin{align*}
\phi_{\birth(x_i^{T_{m+1}})}^{\birth(x_i^{s_n})+|T_{m+1}-s_n|}(x_i^{T_{m+1}})&=\alpha_{\birth(x_i^{s_n})+|s-s_n|}^{s\to T_{m+1}}(\phi_{\birth(x_i^{s})}^{\birth(x_i^{s_n})+|s-s_n|}(x_i^{s}))\\
&=\sum_j \Mat_{B_s^{new}}^{B_{T_{m+1}}^{new}}(\alpha^{s\to T_{m+1}})(j,i) \phi_{\birth(x_j^{T_{m+1}})}^{\birth(x_i^{s_n})+|T_{m+1}-s_n|}(x_j^{T_{m+1}})
\end{align*}

As no critical values in $\Mod{V}_{T_{m+1}}$ occur in the height range of $(\birth(x_i^{T_{m+1}}), \birth(x_i^{T_{m+1}})+|T_m-T_{m+1}|)$  we infer that 
$\Mat_{B_{s_n}^{new}}^{B_{T_{m+1}}^{new}}(\alpha^{s_n\to T_{m+1}})=\pi_{S_{T_{m+1}}}$. 

%
%

\subsubsection*{Case where $T_m$ is critical and forwards compatible:}
Observe that $S_t$ is the same for all $t\in (T_m, T_{m+1}]$ and that $S_{T_m}\cap S_t=S_{T_m}$ for all $t\in [T_m, T_[{m+1}]$.

Set $B_{T_m}^{new}=B_{T_{m}}^{old}$ and by Proposition \ref{prop:simplifies} or Proposition \ref{prop:newinterval} (depending on the type of critical behaiviour) we can set $B_{T_{m+1}}^{new}$ to be $\Mat_{B_{T_m}^{new}}^{B^{old}_{T_{m+1}}}(\alpha^{T_m\to T_{m+1}})B_{T_{m+1}}^{old}$ as the new basis for $\Mod{V}_{T_{m+1}}$, and under this new basis 
$$\Mat_{B_{T_m}^{new}}^{B_{T_{m+1}}^{new}}(\alpha^{T_m\to T_{m+1}})=\pi_{S_{T_m}\cap S_{T_{m+1}}}=\Mat_{B_{T_{m+1}}^{new}}^{B_{T_m}^{new}}(\beta^{T_{m+1}\to T_m}).$$ 
We now use the same process as in the case where $T_m$ is critical but in the reverse direction. Define the sequence $\{s_n\}$ inductively by $s_0=T_{m+1}$ and $s_n=s_{n-1}-f(s_n)/4$. This sequence is bounded and strictly decreasing. We can show it limits to $T_m$ analogously to the above case. 

We can then inductive define the new bases for the persistence module. For $t\in [s_{n+1}, s_n)$ we apply Proposition \ref{prop:boring} with $\Mod{X}=\Mod{V}_{s_{n}}$ and $\Mod{Y}=\Mod{V}_t$ and, slightly confusingly, $\alpha=\beta^{s_{n}\to t}$ and $\beta=\alpha^{t \to s_n}$. The calculations showing that the matrices of the various interleaving maps are all $\pi_{S_s\cap S_t}$ is highly analogous and thus we will omit them here.

%
%

\subsubsection*{Case where $T_m$ is critical and forwards incompatible:}
Observe that $S_t$ is the same for all $t\in [T_m, T_{m+1}]$. Let $(\gamma_k, \gamma_l)$ denote the vines that make $T_m$ forwards incompatible. Set $\lambda = \Mat_{B_{T_{m+1}}^{new}}^{B_{T_{m}}^{old}}(\beta^{T_{m+1} \to T_{m}})(l,k)/\Mat_{B_{T_{m+1}}^{new}}^{B_{T_{m}}^{old}}(\beta^{T_{m+1} \to T_{m}})(l,l)$.

Set $B_{T_m}^{new}=B_{T_{m}}^{old}$. By Proposition \ref{prop:inclusion} we know $\Mat_{B_{T_m}^{new}}^{B_{T_{m+1}}^{old}}(\alpha^{T_m\to T_{m+1}})e_{lk}^{-\lambda}$ is a basis transformation matrix for $\Mod{V}_{T_{m+1}}$ and, furthermore, that  under the new basis $B_{T_{m+1}}^{new}$ we have $\Mat_{B_{T_m}^{new}}^{B_{T_{m+1}}^{new}}(\alpha^{T_m\to T_{m+1}})=e^{\lambda}_{lk}$ and  $\Mat_{B_{T_{m+1}}^{new}}^{B_{T_m}^{new}}(\beta^{T_{m+1}\to T_m})= e^{-\lambda}_{lk}$.

We now use the same process as in the case where $T_{m}$ is critical and forward compatible. We use the same sequence $\{s_n\}$ inductively defined by $s_0=T_{m+1}$ and $s_n=s_{n-1}-f(s_n)/4$ which again limits to $T_m$. We then inductively over define the new bases for the persistence module for $t\in [s_{n+1}, s_n)$ using Proposition \ref{prop:boring}. The same arguments show that $\Mat_{B_{s}^{new}}^{B_{t}^{new}}(\alpha^{s\to t})=\pi_{T_m}=\Mat_{B_{t}^{new}}^{B_{s}^{new}}(\beta^{t\to s})$ for $s<t$ sufficiently close and both in $[T_m, T_{m+1})$. 

%
%
%

We want to show that $\Mat_{B_s^{new}}^{B_{T_{m+1}}^{new}}(\alpha^{s\to T_{m+1}})=\pi_{S_{T_m}}e^{\lambda}_{lk}$ for all $s\in [T_m, T_{m+1}].$ We prove this inductively over $n$ for $s\in (s_{n-1}, s_n]$ with the base case of $n=0$ the singleton $\{T_m\}$ true by construction. Let $\gamma_i\in S_{T_{m+1}}$ and thus $\death(\gamma_i^t)-\birth(\gamma_i^t)>|T_m-T_{m+1}$ for all $t\in [T_m, T_{m+1}]$.  

If $i\neq k$ then as the interleaving maps commute we know 
$$\alpha_{\birth(x_i^{s_n})}^{s_n \to T_{m+1}}(x_i^{s_n})=\alpha_{\birth(x_i^{s_n})+|s-s_n|}^{s\to T_{m+1}}(\alpha_{\birth(x_i^{s_n})}^{s_n \to s}(x_i^{s_n}))$$
and thus
\begin{align*}
\phi_{\birth(x_i^{T_{m+1}})}^{\birth(x_i^{s_n})+|T_{m+1}-s_n|}(x_i^{T_{m+1}})&=\alpha_{\birth(x_i^{s_n})+|s-s_n|}^{s\to T_{m+1}}(\phi_{\birth(x_i^{s})}^{\birth(x_i^{s_n})+|s-s_n|}(x_i^{s}))\\
&=\sum_j \Mat_{B_s^{new}}^{B_{T_{m+1}}^{new}}(\alpha^{s\to T_{m+1}})(j,i) \phi_{\birth(x_j^{T_{m+1}})}^{\birth(x_i^{s_n})+|T_{m+1}-s_n|}(x_j^{T_{m+1}}).
\end{align*}

For $i=k$, we instead get
\begin{align*}
\phi_{\birth(x_k^{T_{m+1}})}^{\birth(x_k^{s_n})+|T_{m+1}-s_n|}(x_k^{T_{m+1}})&+\lambda \phi_{\birth(x_l^{T_{m+1}})}^{\birth(x_l^{s_n})+|T_{m+1}-s_n|}(x_l^{T_{m+1}})\\
&=\alpha_{\birth(x_i^{s_n})+|s-s_n|}^{s\to T_{m+1}}(\phi_{\birth(x_i^{s})}^{\birth(x_i^{s_n})+|s-s_n|}(x_i^{s}))\\
&=\sum_j \Mat_{B_s^{new}}^{B_{T_{m+1}}^{new}}(\alpha^{s\to T_{m+1}})(j,i) \phi_{\birth(x_j^{T_{m+1}})}^{\birth(x_i^{s_n})+|T_{m+1}-s_n|}(x_j^{T_{m+1}})
\end{align*}


As no critical values in $\Mod{V}_{T_{m+1}}$ occur in the height range of $(\birth(x_i^{T_{m+1}}), \birth(x_i^{T_{m+1}})+|T_m-T_{m+1}|)$  we infer that 
$\Mat_{B_{s_n}^{new}}^{B_{T_{m+1}}^{new}}(\alpha^{s_n\to T_{m+1}})=\pi_{S_{T_{m+1}}}e^{\lambda}_{lk}$. 

\end{proof}

It would be possible to develop algorithms for computing forward and backwards simplified vine and matrix representations given an input vine and matrix representation over a sufficiently dense discretisation, but this is outside the scope of this paper.

\section{Vineyard and Vector representation}

If we require that the associated family of basis for a vine and matrix representation has been forwards and then backwards simplified then we know that the matrices must be in a very restricted form. Assuming that $s<t$ are sufficient close we know that the matrices are diagonal for almost all pairs $s,t$ with the entries $0$ or $1$ in a manner determined by the underlying vineyard. The only matrices that are an exception to this are where $t$ is not backwards compatible and $s<t$. Here we can have an additional non-zero entry, but only at the $(l,k)$ entry where $t$ is backwards incompatible by vines $(\gamma_k,\gamma_l)$. Let $\lambda(t)$ denote this $(l,k)$ entry.

Given a vineyard module $\Vine{V}$ with and underlying $\Vineyard{V}$. We can summarise all the information in the vine and matrix representation (for forwards and then backwards simplified associated bases) by the sequence $\overline{\lambda}:=(\lambda(t_1),\lambda(t_2),\ldots \lambda(t_K))$ where $t_1<t_2<\ldots <t_K$ are the times where the vineyard  $\Vineyard{V}$ is backwards incompatible. We call the pair $(\Vineyard{V}, \overline{\lambda})$ a \emph{vineyard and vector representation} of vineyard module $\Vine{V}$. By Proposition \ref{prop:isom} we know that whenever the vineyard and vector representations of $\Vine{V}$ and $\Vine{W}$ agree then $\Vine{V}$ and $\Vine{W}$ must be isomorphic. However, we can not in general expect uniqueness of this representation. 

There is one important case where we have a unique vineyard and vector representation which is where the vineyard module is trivial. By trivial we mean it is isomorphic to the direct sum of vineyard modules.  Notably this provides a necessary and sufficient condition for a vineyard module to be trivial.

\begin{theorem}
Let $\Vine{V}$ be a vineyard modules with simplified representation $\overline{\lambda}:=(\lambda(t_1),\lambda(t_2),\ldots \lambda(t_K))$. Then $\Vine{V}$ is isomorphic to the direct sum of vine modules if and only if $\lambda(t_k)=0$ for all $k$.
\end{theorem}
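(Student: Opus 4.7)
The plan is to prove the biconditional in two directions: the easy direction via the block decomposition lemma, and the harder direction by tracking how the forward-then-backward simplification interacts with the canonical basis family coming from a direct sum decomposition.

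\emph{Sufficiency} ($\overline{\lambda}=0 \Rightarrow$ direct sum of vine modules). If every $\lambda(t_k)=0$, the forward-then-backward simplified description forces $M(\alpha^{s\to t})=M(\beta^{t\to s})=\pi_{S_s\cap S_t}$ for all $s<t$, so every matrix in the vine and matrix representation is a diagonal projection. These projections are block diagonal with singleton blocks indexed by the vines, so the block decomposition lemma produces $\Vine{V}\cong\bigoplus_{i=1}^N \Vine{V}_i$, where each $\Vine{V}_i$ has single-vine vine and matrix representation $(\{\gamma_i\},\,\pi_{\{i\}\cap S_s\cap S_t},\,\pi_{\{i\}\cap S_s\cap S_t})$. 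This is exactly the representation of the vine module $\Mod{I}[\gamma_i]$, so Proposition~\ref{prop:isom} gives $\Vine{V}_i\cong\Mod{I}[\gamma_i]$.

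\emph{Necessity} ($\Vine{V}$ a direct sum of vine modules $\Rightarrow \overline{\lambda}=0$). Fix an isomorphism $\Vine{V}\cong\bigoplus_{i=1}^N\Mod{I}[\gamma_i]$ and let $\{B_t^{std}\}$ be the pullback of the standard direct-sum basis; then $\Mat_{B_s^{std}}^{B_t^{std}}(\alpha^{s\to t})=\pi_{S_s\cap S_t}$ for all $s<t$. Let $\{B_t^{simp}\}$ be any forward-then-backward simplified family, and write $B_t^{simp}=C^t(B_t^{std})$ for extended basis transformation matrices $C^t$. I would trace the segment-by-segment construction of Proposition~\ref{prop:cansimplify} starting from an arbitrary initial basis $B_t^{init}=C^t_{init}(B_t^{std})$: the forward pass has the effect of propagating $C^{T_0}_{init}$ through successive segments (composed with the relevant projections at support changes), so when the backward pass reaches a backwards-incompatible $t^*=T_{m+1}$ by vines $(\gamma_k,\gamma_l)$, the matrix $A=\Mat^{\text{fwd}}(\beta^{t^*\to T_m})$ reduces to a conjugate of $\pi_{S_{T_m}\cap S_{t^*}}$ by an extended basis transformation matrix. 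By definition $\lambda(t^*)=A(l,k)/A(l,l)$, and since $\pi_{S_{T_m}\cap S_{t^*}}$ is diagonal and the conjugating matrix is a basis transformation with non-zero diagonal, the $(l,k)$ entry of $A$ vanishes, giving $\lambda(t^*)=0$.

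\emph{Main obstacle.} The delicate part is the final verification that the $(l,k)$ entry of the conjugated matrix vanishes across all sub-cases. One must handle (i) both backwards-incompatibility sub-cases (case 11 with coinciding deaths and case 12 with coinciding births), where the triangularity constraints on $C^s$ and $C^{t^*}$ differ because the birth/death partial orders at $s$ and $t^*$ differ; (ii) the extended-basis formalism when vines enter or leave the diagonal between $T_m$ and $t^*$, so that some of the $\pi_{S_s\cap S_t}$ have zeros on the diagonal; and (iii) sequences of forward-incompatibilities preceding $t^*$, each of which introduces an $e^{\lambda_{\text{fwd}}}_{lk}$-type correction as in Proposition~\ref{prop:inclusion} that must be propagated forward. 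Careful induction through the segments, using the identities established in Propositions~\ref{prop:boring}, \ref{prop:simplifies}, \ref{prop:inclusion}, and \ref{prop:newinterval}, is required to complete the argument in full generality.
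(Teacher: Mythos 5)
The sufficiency direction of your proposal is fine: if $\overline\lambda=0$ then all matrices in the representation are the diagonal projections $\pi_{S_s\cap S_t}$, and either the block decomposition lemma followed by Proposition~\ref{prop:isom} (your route) or Proposition~\ref{prop:isom} applied directly against the trivial vineyard module (the paper's route) gives the conclusion. Both are short and correct, so this part is equivalent in substance.

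The necessity direction has a genuine gap at precisely the point where you assert that ``the $(l,k)$ entry of $A$ vanishes.'' The claim that $A=\Mat^{\mathrm{fwd}}(\beta^{t^*\to T_m})$ is a \emph{conjugate} of $\pi_{S_{T_m}\cap S_{t^*}}$ is not right: writing $\hat B_{T_m}=C^{T_m}(B_{T_m}^{std})$ and $\hat B_{t^*}=C^{t^*}(B_{t^*}^{std})$, what one gets is of the form $(C^{T_m})^{-1}\pi\,C^{t^*}$, and $C^{T_m}\neq C^{t^*}$ in general, so this is not a similarity transformation. Worse, the triangularity constraint on $C^{t^*}$ is governed by the partial order of intervals at time $t^*$, where $\gamma_l\leq\gamma_k$ \emph{does} hold (that is the definition of backwards incompatibility at $t^*$ by $(\gamma_k,\gamma_l)$), so nothing prevents $C^{t^*}(l,k)\neq 0$; the triangularity of the basis transformation matrices alone therefore cannot force $A(l,k)=0$. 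This is exactly the point your ``main obstacle'' list circles around but does not resolve.

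What is actually needed, and what the paper supplies, is an inductively proven strengthening of the triangularity constraint. The paper fixes the isomorphism $\rho:\Vine{V}\to\bigoplus\Mod{I}[\gamma_i]$ and tracks the matrices $\hat M_m$ of $\rho_{T_m}$ with respect to the \emph{forward-simplified} bases, proving by induction (its Claim~\ref{claim:forwards}) that $\hat M_m(j,i)\neq 0$ forces the nesting $\birth(\gamma_j^n)\leq\birth(\gamma_i^n)<\death(\gamma_j^n)\leq\death(\gamma_i^n)$ for \emph{all} $n$ back to the last time the two vines were disjoint. This is strictly stronger than the pointwise morphism constraint at time $T_m$, and it is exactly what is needed: at a backwards-incompatible time $T_m$ by $(\gamma_k,\gamma_l)$, the nesting fails at $T_{m-1}$, so the claim forces $\hat M_m(l,k)=0$; combined with $\hat M_{m-1}=e^{-\lambda}_{lk}\hat M_m$ (from the commutativity of $\rho$ with the interleaving maps) and $\hat M_{m-1}(l,k)=0$ (morphism condition), this yields $\lambda\hat M_m(l,l)=0$ and hence $\lambda=0$ because the diagonal entry is nonzero. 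A second inductive claim propagates this through the backward pass. Your proposal, by contrast, never establishes any inductive constraint on the basis-change matrices beyond the pointwise triangularity, and that is insufficient. To close the gap you would need to introduce and prove an analogue of Claim~\ref{claim:forwards}; at that point the argument would essentially coincide with the paper's.
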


\begin{proof}
Note that for the direct sum of vine modules $\lambda(t_k)=0$ for all $k$. We can then apply  Proposition \ref{prop:isom} to say that if $\lambda(t_k)=0$ for all $k$ then $\Vine{V}$ is isomorphic to a direct sum of vine modules. We know will wish to prove the other direction and will be assuming that $\Vine{V}$ is isomorphic to a direct sum of vine modules.

We now need to set up substantial notation.

Let $\{[T_m, T_{m+1}\}]_{m=0}^{N-1}$ be a segmentation of the underlying vineyard into segments. To reduce the number of indices we will write $\birth(\gamma_j^{m})$ for $\birth(\gamma_j^{T_m})$ and $\death(\gamma_j^{m})$ for $\death(\gamma_j^{T_m})$. For each $m$ where $\birth(\gamma_j^{m})\leq \birth(\gamma_i^{m})<\death(\gamma_j^{m})\leq \death(\gamma_i^{m})$, let $\tau_m(j,i)$ be the largest $n\leq m$ where the intervals corresponding to $\gamma_i$ and $\gamma_j$ are disjoint at $T_n$ (with value $-\infty$ if not disjoint at any previous time).

Let $\{\gamma_i\}$ denote the vines in the underlying vineyard of $\Vine{V}$. Suppose that $\rho:\Vine{V}\to \Vine{W}$ is an isomorphism where $\Vine{W}=\oplus \Vine{I}[\gamma_i]$ is the direct sum of vine modules equipped with the standard basis.  By construction the forward and backwards simplification of $\Vine{W}$ leaves the basis elements unchanged. Denote the basis of $\Mod{W}_{T_m}$ by $\{B_m^W\}$.
Let $\hat{B}_m$ denote the transformed bases of $\Mod{V}_{T_m}$ after forwards simplifying, and $B_m$ be the resulting bases of $\Mod{V}_{T_m}$ after forwards and then backwards simplifying. 
 Let $\hat{M}_m=\Mat_{\hat{B}_{m}}^{B_m^W}(\rho_{T_m})$ and $M_m=\Mat_{B_{m}}^{B_m^W}(\rho_{T_m})$ denote the corresponding basis transformation matrices.


%
%
%
%
%
%

\begin{claim}\label{claim:forwards} 
If $\hat{M}_m(j,i)\neq 0$ then $\birth(\gamma_j^{n})\leq \birth(\gamma_i^{n})<\death(\gamma_j^{n})\leq \death(\gamma_i^{n})$ for all $n\in (\tau_m(j,i), m)$.
\end{claim}

We will prove this claim by induction. The base case holds as $\rho_{T_0}:\Mod{V}_{T_0} \to \Mod{W}_{T_0}$ is a morphism so $\hat{M}_0(j,i)\neq 0$ implies  $\birth(\gamma_j^{0})\leq \birth(\gamma_i^0)<\death(\gamma_j^{0})\leq \death(\gamma_i^{0})$.

Suppose that $\hat{M}_{m+1}(j,i)\neq 0$. This implies that  $\birth(\gamma_j^{m+1})\leq \birth(\gamma_i^{m+1})<\death(\gamma_j^{m+1})\leq \death(\gamma_i^{m+1})$.  
 If $\gamma_i$ and $\gamma_j$ are disjoint at $T_m$ we are done (as $m=\tau_{m+1}(j,i)$) so suppose that $\tau_{m+1}(j,i)<m$. Note by definition this implies $\tau_{m+1}(j,i)=\tau_{m}(j,i)$. We now have to consider the different local cases. Set $\epsilon=T_{m+1}-T_m$.

If $T_m$ is forward compatible with $\hat{M}_{m+1}(j,i)\neq 0$ and $m<\tau_{m+1}(j,i)$ we know $\death(\gamma_j^{m+1})>\birth(\gamma_i^{m})+\epsilon$ by considering the cases in Table \ref{table:cases} and the restriction on $\epsilon$ in our definition of segment.
Since 
$$
 \hat{M}_{m+1}(p,i) \phi_{\birth(v_p^{m+1})}^{\birth(w_i^m)+\epsilon}(v_p^{m+1})=\rho_{m+1}(\alpha_X^{T_m \to T_{m+1}}(w_i^m))=\alpha_V^{T_m \to T_{m+1}}(\rho_m(w_i^m))=\sum_p  \hat{M}_m(p,i) \phi_{\birth(v_p^{m+1})}^{\birth(w_i^m)+\epsilon}(v_p^{m+1})
$$
we conclude that $\hat{M}_{m}(j,i)=\hat{M}_{m+1}(j,i)\neq 0$. 
%
%
%
Our inductive assumption then implies  $\birth(\gamma_j^n)\leq \birth(\gamma_i^n)<\death(\gamma_j^n)\leq \death(\gamma_i^n)$ for all $n\in (\tau_{m+1}(j,i), m)$. 

Now suppose that $T_m$ is forwards incompatible with respect to vines $(\gamma_k, \gamma_l)$ and let $\lambda$ be such that $\Mat_{\hat{B}_m^V}^{\hat{B}_{m+1}^V}(\alpha^V)=e^{\lambda}_{lk}\pi_{S}$ where $S$ is the set of vines whose support contains $T_m$. Note that by construction we have $\Mat_{\hat{B}_m^W}^{\hat{B}_{m+1}^W}(\alpha^W)=\pi_{S}$. Since $\rho$ commutes with the interleaving maps, and no deaths occur within $2\epsilon$ of any births, we know that $$\hat{M}_{m+1}=e^{-\lambda}_{lk} \hat{M}_m$$ as matrices. 

For $i\neq k$ we have $\hat{M}_{m+1}(j,i)=\hat{M}_{m+1}(j,i)$ and so $\hat{M}_{m+1}(j,i) \neq 0$ implies $\hat{M}_{m+1}(j,i)\neq 0$. 
Thus we can apply the inductive hypothesis to say.$\birth(\gamma_j^n)\leq \birth(\gamma_i^n)<\death(\gamma_j^n)\leq \death(\gamma_i^n)$ for all $n\in (\tau_{m+1}(j,i), m).$

For $i=k$ and $j=l$ we know $\hat{M}_{m+1}(l,k)= 0$ (as $\rho_{m+1}$ is a morphism) so there is nothing to prove here. 
We know $\hat{M}_{m+1}(l,k)= \hat{M}_m(l,k)-\lambda \hat{M}_m(l,l)$. As $\rho_{m+1}$ is a morphism we know that 
$\hat{M}_{m+1}(l,k)=0$ and thus $\lambda =\hat{M}_m(l,k)/\hat{M}_m(l,l)$.

Finally consider $i=k$ and $j\neq l$ and suppose that  $\hat{M}_{m+1}(j,k)\neq 0$. If $\hat{M}_{m}(j,k)\neq 0$ then the inductive hypothesis can be used, so suppose further that $\hat{M}_{m}(j,k)=0$. We have $$\hat{M}_{m+1}(j,k)= \hat{M}_m(j,k)- \hat{M}_m(j,l)\hat{M}_m(l,k)/\hat{M}_m(l,l)$$ which, with are current suppositions, implies that both $ \hat{M}_m(j,l)\neq 0$ and $\hat{M}_m(l,k)\neq 0$.

By the inductive hypothesis with $\hat{M}_m(j,l)\neq 0$ and $\hat{M}_{m}(l,k)\neq 0$ we know that
$\birth(\gamma_j^n)\leq \birth(\gamma_l^n)<\death(\gamma_j^n)\leq \death(\gamma_l^n)$ for all $n\in (\tau_{m}(j,l), m)$ and $\birth(\gamma_l^n)\leq \birth(\gamma_k^n)<\death(\gamma_l^n)\leq \death(\gamma_k^n)$ for all $n\in (\tau_{m}(l,k), m)$. 
We can show that $\tau_m(j,k)>\tau_{m}(j,l)$ and $\tau_m(j,k)>\tau_{m}(l,k)$ by sandwiching of intervals.
If $n>\tau_m(j,k)$  then $\birth(\gamma_l^n)\leq \birth(\gamma_k^n)<\death(\gamma_l^n)\leq \death(\gamma_k^n)$ and $\birth(\gamma_j^n)\leq \birth(\gamma_l^n)<\death(\gamma_j^n)\leq \death(\gamma_l^n)$. Combined these imply $\birth(\gamma_j^n)\leq \birth(\gamma_k^n)<\death(\gamma_j^n)\leq \death(\gamma_k^n)$. 
%
%
%
%
Having covered all the cases we have finished proving by induction that if $\hat{M}_m(j,i)\neq 0$ then $\birth(\gamma_j^n)\leq \birth(\gamma_i^n)<\death(\gamma_j^n)\leq \death(\gamma_i^n)$ for all $n\in (\tau_m(j,i), m)$. 

\begin{claim}\label{claim:backwards}
If $\Mat_{B_m^V}^{B_m^W}(\rho_{T_m})(j,i)\neq 0$ then
 $\birth(\gamma_j^{T_n})\leq \birth(\gamma_i^{T_n})<\death(\gamma_j^{T_n})\leq \death(\gamma_i^{T_n})$ for all $n\in (\tau_m(j,i), m)$.
\end{claim}


This claim can also be proved by induction. The base case holds as $M_N=\hat{M}_N$ by definition of backwards simplification, and using Claim \ref{claim:forwards}.
Assume the inductive hypothesis for $m$ and we wish to show it also holds for $m-1$. Let $\epsilon=T_m-T_{m-1}$.

Suppose that $T_m$ is backwards compatible. By the construction of the backwards simplified basis we have
$$\sum_j M_{m} \psi_{\birth(w_j^{m-1})}^{\birth(v_i^{m})+\epsilon}(w_j^{m-1})=\beta_W^{T_m\to T_{m-1}}(\rho_{T_m}(v_i^m))=\rho_{T_{m-1}}(\beta_V^{T_m\to T_{m-1}}(v_i^m))=\sum_j M_{m-1} \psi_{\birth(w_j^{m-1})}^{\birth(v_i^{m})+\epsilon}(w_j^{m-1})$$


For $(j,i)$ with $\birth(\gamma_j^{m-1})\leq \birth(\gamma_i^{m-1})<\death(\gamma_j^{m-1})\leq \death(\gamma_i^{m-1})$ and $\death(\gamma_j^{m-1})>\birth(\gamma_i^{m-1})+\epsilon$ we also have $\tau_{m-1}(j,i)=\tau_m(j,i)$. By comparing coefficients we infer $M_{m-1}(j,i)=M_m(j,i)$. We then can use the inductive assumption to say if $\Mat_{B_{m-1}^V}^{B_{m-1}^W}(\rho_{T_{m-1}})(j,i)\neq 0$ then
 $\birth(\gamma_j^{T_n})\leq \birth(\gamma_i^{T_n})<\death(\gamma_j^{T_n})\leq \death(\gamma_i^{T_n})$ for all $n\in (\tau_{m-1}(j,i), m-1)$.
 
If there is a pair $(l,k)$ with $\birth(\gamma_l^{m-1})\leq \birth(\gamma_k^{m-1})<\death(\gamma_l^{m-1})\leq \death(\gamma_k^{m-1})$ but $\death(\gamma_l^{m-1})\leq \birth(\gamma_k^{m-1})+\epsilon$ then we must be in the case where $\death(\gamma_l^m)=\birth(\gamma_k^m)$ (see case 10 in Table \ref{table:cases} with $\Mod{X}=\Mod{V}_{T_{m}}$ and $\Mod{Y}=\Mod{V}_{T_{m-1}}$). Here we will need to use the construction of the backwards simplified basis. For the sake of clarity we will assume for the moment that all the vines have $T_m$ and $T_{m-1}$ in their support - so we have a bases rather than extended bases and transformation matrices of these matrices are invertible. Extending the argument to extended bases is left as an exercise.




Denote by $\beta_V: \Mod{V}_{T_{m}}\to \Mod{V}_{T_{m-1}}$ the morphism within $\Vine{V}$. 
Let $A=\Mat_{B_m}^{\hat{B}_{m}}(\Id)$ be the matrix corresponding to the change of basis. 
Since $T_{m-1}$ is forwards compatible and $T_m$ is backwards compatible we know $\beta_V(v_i^m)=v_i^{m-1}$ and $\beta_V(\hat{v}_i^m)=\hat{v}_i^m$. We thus have  
$$\beta_V(v_i^m)=\beta_V\left(\sum_j A(j,i)\psi_{\birth(\gamma_j^m)}^{\birth(\gamma_i^m)} (\hat{v}_j^m)\right)=\sum_j A(j,i) \psi_{\birth(\gamma_j^{m-1})}^{\birth(\gamma_i^m)+\epsilon}(\hat{v}_j^{m-1})$$ 
and since $A(l,k)=0$ this implies $\Mat_{B_m^V}^{\hat{B}_{m-1}^V}(\beta_V)=A$.

When backwards simplifying the basis $B_{m-1}^V$ is constructed by computing the matrix  $\Mat_{B_m}^{\hat{B}_{m-1}}(\beta_V)$ and then using this as a basis transformation matrix and applying it to $\hat{B}_{m-1}$. In short,
$B_{m-1}=A(\hat{B}_{m-1}).$
%


As all the basis transformation maps commute (when the domain and codomain bases match appropriately) we can show that
$M_{m-1}=\hat{M}_{m-1} A$ and $M_m= \hat{M}_m A$ as matrices.
Combining all these equations together we have
$$M_{m}-M_{m-1}=(\hat{M}_{m}-\hat{M}_{m-1}) A.$$ 
We also know that for $(j,i)\neq (l,k)$ that both $M_m(j,i)=M_{m-1}(j,i)$ and $\hat{M}_m(j,i)=\hat{M}_{m-1}(j,i)$.

If $M_{m-1}(l,k)\neq 0$ then, since $M_m(l,k)=0$, we have $M_{m}-M_{m-1}\neq 0$. This implies so $(\hat{M}_{m}-\hat{M}_{m-1})A\neq 0$ and since $A$ is invertible this implies $\hat{M}_{m}-\hat{M}_{m-1}\neq 0$. Since the only entry where $\hat{M}_{m}$ and $\hat{M}_{m-1}$ can differ is $(l,k)$ this implies that $\hat{M}_{m-1}(l,k)\neq \hat{M}_{m}(l,k)=0$. We thus can use Claim \ref{claim:forwards} to say that 
$\birth(\gamma_j^{T_n})\leq \birth(\gamma_i^{T_n})<\death(\gamma_j^{T_n})\leq \death(\gamma_i^{T_n})$ for all $n\in (\tau_m(l,k),m-1)$.

Now suppose that  $T_m$ backwards incompatible with respect to $(\gamma_k,\gamma_l)$. This means $T_m=t_n$ for some $n$ that is it is a critical time for our vector representation.

By our inductive assumption this implies $\Mat_{B_m^V}^{B_{m-1}^V}(\beta_{T_m})(l,k)=0$.
Let $\lambda(=\lambda(t_n))$ be such that $\Mat_{\hat{B}_m^V}^{\hat{B}_{m-1}^V}(\alpha^V)=e^{\lambda}_{lk}\pi_{S}$ where $S$ is the set of vines whose support contains $T_m$. Note that by construction we have $\Mat_{\hat{B}_m^W}^{\hat{B}_{m-1}^W}(\alpha^W)=\pi_{S}$. Since $\rho$ commutes with the interleaving maps, and no deaths occur within $2\epsilon$ of any births, we know that $$\hat{M}_{m-1}=e^{-\lambda}_{lk} \hat{M}_m$$ as matrices. 

We know $\hat{M}_{m-1}(l,k)= 0$ (as $\rho_{m+1}$ is a morphism) and  $\hat{M}_{m-1}(l,k)= \hat{M}_m(l,k)-\lambda \hat{M}_m(l,l)$. As $\hat{M}_m(l,k)=0$ and $ \hat{M}_m(l,l)\neq 0$ we conclude that $\lambda = 0$. 
This now implies $\hat{M}_{m-1}=\hat{M}_m$ as matrices and we can we can apply the inductive assumption to finish this case.

Notably in the process of proving Claim \ref{claim:backwards} we proved that the vector in our vineyard and vector representation is the zero vector.
\end{proof}

\section{An Indecomposable Vineyard Module with Two Vines}

In this section we present the simplest example of a vineyard module with two vines which is not decomposable into two vine modules. For the interests of clarity we restrict to $\Z_2$ as the field for homology calculations but this example will hold for general fields.

The underlying space is a lying in the plane which we split into four sets: $A=\{\|z\|<1\}$, $B=\{\|z\|=1\}$, $C=\{1<\|z\|<2\}$ and $D=\{\|z\|=2\}$. We have $K=A\cup B\cup C\cup D$ and $f_t:K\to \R$ continuous which respect to $t\in [0,10]$ and for each $t$ we have $f_t$ is constant on each of $A$, $B$, $C$, $D$. 
$$f_t(A)=21- t, \quad f_t(B)=14-t, \quad f_t(C)=15+t, \quad f_t(D)=t.$$
Note that all sublevel sets are closed as $f_t(B)\leq f_t(A)$ and $f_t(C) \leq f_t(B), f_t(D)$ for all $t$. We have two times where critical heights coincide which is at $t=3$ (with $f_3(A)=f_3(C)$) and at $t=7$ (with $f_7(B)=f_7(D)$). The continuously changing sublevel set filtration defines a vineyard module where the persistence modules $\Mod{V}_t$ is the one for the $1$-homology dimension persistent homology of the filtration by $f_t$, and the interleaving maps are defined by the natural inclusion maps $f_t^{-1}(-\infty, h]\subset f_s^{-1}(\infty, h+|s-t|]$ for $h\in \R$, and $s,t\in [0,10]$. We can depict the underlying vineyard via its barcode representations at periodic locations in Figure \ref{fig:indecomposable}.

Over the interval $[0, 3)$ there is only one choice of basis for $\Mod{V}_t$, namely $x_1^t=[B]$ (with $\birth([B])=f_t(B)$ and $\death([B])=f_t(A)$) and $x_2^t=[D+B]$ (with $\birth([D+B])=f_t([D])$ and $\death([D+B])=f_t(C)$). Over the interval $(7, 10]$ there is only one choice of basis for $\Mod{V}_t$, namely $x_1^t=[B]$ (with $\birth([B])=f_t(B)$ and $\death([B])=f_t(A)$) and $x_2^t=[D]$ (with $\birth([D])=f_t([D])$ and $\death([D])=f_t(C)$). In the middle section, for $t\in [3,7]$ there are two different possible choices of basis; $\{[B], [D+B]\}$ or $\{[B], [D]\}$. When we forward simplify we get basis over this middle range corresponding to $\{[B], [D+B]\}$ and then when we then backwards simplify we get instead the basis corresponding to $\{[B], [D]\}$. After forwards and backwards simplifying we have for small $\delta$, $\beta^{3\to 3-\delta}(x_2^3)=x_2^{3-\delta}+x_1^{3-\delta}$ and  $\beta^{3\to 3-\delta}(x_1^3)=x_1^{3-\delta}$. In matrix form:
$$\Mat_{B_3}^{B_{3-\delta}}(\beta)=\begin{pmatrix} 1 & 1  \\ 
0 & 1 \\
\end{pmatrix}$$
implying the vector in the vine and vineyard vector representation is $(1)$ and the vineyard module is not isomorphic to the direct sum of vine modules.

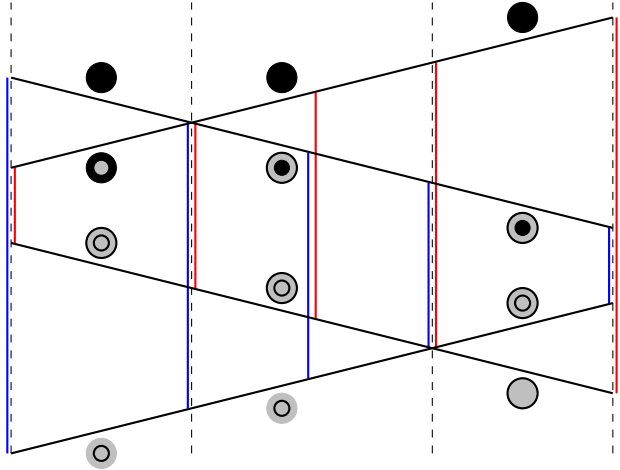
\begin{figure}
\begin{center}
\begin{tikzpicture}[scale=0.2]
\draw [dashed] (0,0) -- (0,30);
\draw [dashed] (12,0) -- (12,30);
\draw [dashed] (28,0) -- (28,30);
\draw [dashed] (40,0) -- (40,30);
\draw[fill=lightgray, lightgray] (6,0) circle (1cm);
\draw[thick] (6,0) circle (0.5cm);

\draw[fill=lightgray, lightgray] (6,14) circle (1cm);
\draw[thick] (6,14) circle (0.5cm);
\draw[thick] (6,14) circle (1cm);

\draw[fill=black] (6,19) circle (1cm);
\draw[fill=lightgray] (6,19) circle (0.5cm);

\draw[fill=black] (6,25) circle (1cm);

\draw[fill=lightgray, lightgray] (18,3) circle (1cm);
\draw[thick] (18,3) circle (0.5cm);

\draw[fill=lightgray, lightgray] (18,11) circle (1cm);
\draw[thick] (18,11) circle (0.5cm);
\draw[thick] (18,11) circle (1cm);

\draw[fill=lightgray] (18,19) circle (1cm);
\draw[thick] (18,19) circle (1cm);
\draw[fill=black] (18,19) circle (0.5cm);

\draw[fill=black] (18,25) circle (1cm);

\draw[fill=lightgray, lightgray] (34,4) circle (1cm);
\draw[thick] (34,4) circle (1cm);

\draw[fill=lightgray, lightgray] (34,10) circle (1cm);
\draw[thick] (34,10) circle (0.5cm);
\draw[thick] (34,10) circle (1cm);

\draw[fill=lightgray] (34,15) circle (1cm);
\draw[thick] (34,15) circle (1cm);
\draw[fill=black] (34,15) circle (0.5cm);

\draw[fill=black] (34,29) circle (1cm);

\draw [blue, thick] (-.25,0) -- (-.25,25);
\draw [red, thick] (0.25,14) -- (0.25,19);

\draw [blue, thick] (11.75,3) -- (11.75,22);
\draw [red, thick] (12.25,11) -- (12.25,22);

\draw [red, thick] (28.25,7) -- (28.25,26);
\draw [blue,thick] (27.75,7) -- (27.75,18);

\draw [blue, thick] (39.75,10) -- (39.75,15);
\draw [red, thick] (40.25,4) -- (40.25,29);

\draw [blue, thick] (19.75,5) -- (19.75, 20);
\draw [red, thick] (20.25,9) -- (20.250,24);

\draw [thick] (0,0) -- (40,10);
\draw [thick] (0,25) -- (40,15);
\draw [thick] (0,19) -- (40,29);
\draw [thick] (0,14) -- (40,4);

\end{tikzpicture}
\caption{The vineyard for our example of an indecomposable vineyard module with two vines.  The order in which different subsets appear partition the interval into three sections shown by dashed lines. The sloping lines are the function values of the different sets and the pictures of circles and annuli show the sublevel sets in each of the sections (black for in the sublevel set and grey as in the domain but not in the sublevel set). This vineyard has two vines - one depicted by intervals in blue and the other in red. Calculations showing the vineyard module (whose interleaving maps are thse induced by inclusion) is indecomposable.}\label{fig:indecomposable}
\end{center}
\end{figure}

\section{Future work}
Given the framework of forwards and backwards simplification we have a tractable description of vineyard modules that at the very least can determine whether if it is trivial. The next natural question is whether we can use the forward and backwards simplified representation to explore the decomposition of a vineyard module into a direct sum of indecomposable vineyard modules. As a partway step, can it determine the partition of the vines within the vineyards into those within the same indecomposable summand?


Other future directions include:
\begin{itemize}
\item Removing the various simplifying assumptions on our vineyard modules - allowing for countably many vines or allowing for higher multiplicity of intervals within a persistence module.
\item How could we extend this approach to continuous persistence module valued functions over $S^1$? Here we can still define forward simplification locally but will have the potential of holonomy. What about for more general persistence bundles (\cite{hickok2022persistence}).
\item  If we consider special cases of vineyard modules do we have nice decompositions or do these decompositions have nice geometric interpretations. For example, what happens when our input is a point cloud, and we construct the vineyard where the time parameter corresponds to a bandwidth and the persistence modules are built from height filtrations of kernel density estimates? Do these representations relate to topological simplification (such as in \cite{nigmetov2020topological})?
\item Can we enumerate or construct all the isomorphism classes of vineyard modules of a given vineyard? 
\item Can we find conditions for when a vineyard and vector representation is realisable?
\item Can we describe the space of simplified representations of vineyard modules that are isomorphic?
\end{itemize}

\bibliographystyle{plain}
\bibliography{representingbib}
  
%

\end{document}